\definecolor{refkey}{rgb}{1,0,0}
\definecolor{labelkey}{rgb}{0,0,1}
\definecolor{labelkey}{rgb}{1,1,1}
\theoremstyle{plain}% default
\newtheorem{theorem}{Theorem}[section]
\newtheorem{lemma}[theorem]{Lemma}
\newtheorem{corollary}[theorem]{Corollary}
\newtheorem{definition}[theorem]{Definition}
\newtheorem{proposition}[theorem]{Proposition}
\newtheorem{example}[theorem]{Example}
\newtheorem{remark}[theorem]{Remark}
\theoremstyle{definition}
\newtheorem*{note*}{Note}
\theoremstyle{remark}
\DeclareMathOperator{\lin}{lin}
\DeclareMathOperator{\ad}{ad} \DeclareMathOperator{\Ad}{Ad}
\DeclareMathOperator{\id}{id}
\DeclareMathOperator{\End}{End} 
\DeclareMathOperator{\Hom}{Hom}
 \DeclareMathOperator{\mult}{mult}
\DeclareMathOperator{\bmult}{bmult}
\newcommand{\sections}[1]{ { \Gamma} (#1)}
\newcommand{\XX}{\mathfrak{X}} % vector fields
\newcommand{\abs}[1]{\left\vert#1\right\vert}
\newcommand{\crossedmoduletriple}[3]{(#1\stackrel{#2}{\rightarrow}#3)}
\newcommand{\thetaalgebradegone}{\mathrm{\vartheta}}
\newcommand{\galgebradegzero}{\mathfrak{g}}
\newcommand{\bracketmapbracket}[2]{[#1,#2]_2}%{\derivesbracket}
\newcommand{\tobefilledin}{\,\stackrel{\centerdot}{}\,}
\newcommand{\jet}{\mathfrak{J}}
\newcommand{\liftingd}{\jmath^1}
\newcommand{\g}{\mathfrak{L}}
\newcommand{\Gpd}{\mathcal{G}}
\newcommand{\CinfM}{C^\infty(M)}
\newcommand{\Der}{\mathrm{Der}}
\newcommand{\XkmultG}[1]{\mathfrak{X}^{#1}_{\mathrm{mult}}(\Gpd)}
\newcommand{\OmegakmultG}[1]{\Omega^{#1}_{\mathrm{mult}}(\Gpd)}
\newcommand{\inputvariable}{~\cdot~}
\newcommand{\degree}[1]{\abs{#1}}
\newcommand{\Z}{\mathbb{Z}}
\newcommand{\dstar}{d_*}
\begin{document}

\title[The weak Lie $2$-algebra  of  multiplicative forms]{
The weak Lie $2$-algebra  of  multiplicative forms\\ on a quasi-Poisson groupoid
}
%\thanks{Research partially supported by NSFC grants 12071241(Chen \& Liu) and the National Key Research and Development Program of China (No. 2021YFA1002000) (Lang).}

\author{Zhuo Chen}%{}%\\
\address{Department of Mathematics, Tsinghua University, Beijing 100084, China}%\\\vspace{1mm} 
%\\
\email{\href{mailto:chenzhuo@tsinghua.edu.cn}{chenzhuo@tsinghua.edu.cn}}

\author{Honglei Lang$^\diamond$} 
\address{College of Science, China Agricultural University, Beijing 100083, China}
%%\vspace{1mm}
\email{\href{mailto:hllang@cau.edu.cn}{hllang@cau.edu.cn}(corresponding author)}

\author{Zhangju Liu}
\address{School of Mathematical and Statistical Sciences, Henan University, Kaifeng 475004, China}
\email{\href{mailto: liuzj@pku.edu.cn}{zhangju@henu.edu.cn}}

\makeatother

\begin{abstract} 
	Berwick-Evens and Lerman recently showed that the category of  vector fields on a geometric stack has the structure of a   Lie $2$-algebra. Motivated by   this work, we present a construction of graded weak Lie $2$-algebras associated with quasi-Poisson groupoids  based on the space of multiplicative forms on the groupoid and differential forms on the base manifold. We also establish a morphism between the Lie $2$-algebra of multiplicative multivector fields and the weak Lie $2$-algebra of multiplicative forms, allowing us to compare and relate different aspects of Lie $2$-algebra theory within the context of quasi-Poisson geometry. As an infinitesimal analogy,   we explicitly determine the associated weak Lie $2$-algebra structure of IM $1$-forms along with differential $1$-forms on the base manifold for any quasi-Lie bialgebroid.

%	Quasi-Poisson groupoids are $(+1)$-shifted Poisson objects in the category of  {differentiable} stacks. 	It is known that the space of multiplicative multivector fields on a Lie groupoid is a ($\Z$-)graded Lie $2$-algebra. 	
%		 In   this paper  we find a natural    ($\Z$-)graded weak Lie $2$-algebra structure which is  primarily composed of  multiplicative      forms on a  quasi-Poisson groupoid. Moreover, a morphism between the two Lie $2$-algebras is established.
	
  	\vskip0.2cm
 
  \emph{Keywords}:  {Multiplicative form, multiplicative vector field, quasi-Poisson groupoid,  weak Lie $2$-algebra.} 
   
  \emph{MSC}:~Primary  53D17, 16E45. Secondary   58H05.
\end{abstract}

%\textcolor{red}{53D17 Poisson manifolds, Poisson groupoids and algebroids; 16E45 differential graded algebras and applications; 58H05 pseudogroups and differentiable groupoids; 18H55 homotopical algebra; 58C50 Analysis on supermanifolds or graded manifolds}

\maketitle
\tableofcontents

\section{Introduction}
 
 A quasi-Poisson groupoid is a Lie groupoid $\Gpd$ equipped with a multiplicative $2$-vector field $P$ and some datum controlling $[P,P]$ (i.e., homotopic to zero). These structures are generalizations of Poisson groupoids \cite{W1}, which were initiated from Poisson Lie groups \cite{LuW} and symplectic groupoids \cites{CDW,Weinstein1987}. From the perspective proposed in \cite{BCLX}, quasi-Poisson groupoids can be viewed as $(+1)$-shifted differentiable Poisson stacks. Quasi-Poisson groupoids are essential since they represent the core constituents of objects with either multiplicative (multi-)vector fields or multiplicative differential forms. General multiplicative structures on Lie groupoids have widespread applications in various contexts, as demonstrated in \cites{DIRAC,ILX,MOMENT,MORITA}.

 Let us review some works related to multiplicative vector fields and forms. Berwick-Evans and Lerman \cite{BEL} demonstrated that vector fields on a differentiable stack $X$ can be understood in terms of a Lie $2$-algebra. This Lie $2$-algebra comprises the multiplicative vector fields on a Lie groupoid that presents $X$, along with the sections of the Lie algebroid $A$ associated with the Lie groupoid. The Lie $2$-algebra also appeared in \cite{MehtaPhd}. Furthermore, \cite{BCLX} established that every Lie groupoid $\Gamma$ corresponds to a ($\Z$-)graded Lie $2$-algebra. Recent research has focused on multiplicative differential forms on Lie groupoids due to their connection to infinitesimal multiplicative (IM-) forms and Spencer operators on the Lie algebroid level \cites{BC, CMS1, C}. In a recent work \cite{CLL}, the authors find that if $\Gpd$ is a Poisson Lie groupoid, then the space $\OmegakmultG{\bullet}$ of multiplicative forms on $\Gpd$ has a differential graded Lie algebra (DGLA) structure. Furthermore, when combined with $\Omega^\bullet(M)$, which is the space of forms on the base manifold $M$, $\OmegakmultG{\bullet}$ forms a canonical DGLA crossed module. This supplements the  previously known fact \cites{BCLX,BEL} that multiplicative multivector fields on $\Gpd$ form a DGLA crossed module with the Schouten algebra $\Gamma(\wedge^\bullet A)$ stemming from the tangent Lie algebroid $A$.

  Building on the aforementioned works \cites{MehtaPhd, BEL, BCLX,CLL}, our paper aims to investigate algebraic structures for multiplicative forms on quasi-Poisson groupoids. Specifically, we aim to establish (graded) weak Lie $2$-algebras, cubic $L_\infty$-algebras, and other higher objects.  {To ensure completeness and facilitate understanding for readers from different fields, we start  by introducing the basic definition   of   multiplicative forms on Lie groupoids.}

  \emph{      $\bullet$   Multiplicative forms.}  
 For general theory of  Lie groupoids and Lie algebroids, we refer to the standard text \cites{Mackenzie}. In this paper, we follow conventions of our previous work \cites{CL1,CLL}: $\Gpd\rightrightarrows M$ denotes a Lie groupoid over $M$ whose source and target maps are $s$ and $t$ (both mapping $\Gpd$ to $M$). 
 The tangent Lie algebroid of $\Gpd$ is standard: $A=\ker(s_*)|_{M}$. The letter $A$ could also refer to   a general Lie algebroid over $M$ with the Lie bracket  $[~\cdot~,~\cdot~]$ on $\Gamma(A)$  and anchor map $\rho: A\to TM$.

 For $u\in \Gamma(\wedge^k A)$, denote by $\overleftarrow{u}\in \Gamma(\wedge^k T\Gpd)$ the left-invariant $k$-vector field on $\Gpd$ associated to $u$.  In the meantime, for all $\omega\in \Omega^l(M)$, we have the pullback $s^*\omega\in \Omega^l(\Gpd)$ along the source map $s: \Gpd\to M$. 
 
 Further,  we recall the definitions of multiplicative forms and tensors on a Lie groupoid $\Gpd$ over $M$. Denote by $\Gpd^{(2)}$ the set of composable elements, i.e., $(g,r)\in \Gpd\times \Gpd$, satisfying  $s(g)=t(r)$.
 Denote by $m: \Gpd^{(2)}\to \Gpd$ the groupoid multiplication. 
 \begin{definition}\cite{Weinstein1987} 	A $k$-form $\Theta\in \Omega^k(\Gpd)$  is called {\bf multiplicative} if it satisfies the relation
 	\[m^*\Theta=\mathrm{pr}_1^*\Theta+\mathrm{pr}_2^* \Theta,\]
 	where $\mathrm{pr}_1,\mathrm{pr}_2: \Gpd^{(2)}\to \Gpd$ are the obvious projections.
 \end{definition}   
 Moreover, a function $F\in C^\infty(\Gpd)$  is multiplicative if it is a multiplicative $0$-form. Namely, it satisfies $F(gr)=F(g)+F(r)$ for all $(g,r)\in \Gpd^{(2)}$.
 
 \emph{  $\bullet$  Multiplicative  tensors.} 
The notion of multiplicative tensors is introduced in \cite{BD} by using of the tangent and cotangent Lie groupoids of a given Lie groupoid $\mathcal{G}$.
 \begin{definition}
 	Consider the Lie groupoid
 	\[\mathbb{G}^{(k,l)}: (\oplus^k T^*\Gpd)\oplus (\oplus^l T\Gpd)\rightrightarrows \oplus^k A^*\oplus (\oplus^l TM).\]
 	A $(k,l)$-tensor $T\in \mathcal{T}^{k,l}(\Gpd)$ on $\Gpd$  is called {\bf multiplicative} if it is a multiplicative function on $\mathbb{G}^{(k,l)}$.
 \end{definition}

  \emph{      $\bullet$  Quasi-Poisson groupoids.} Let us also recall the notion of quasi-Poisson groupoids.
 
 \begin{definition}\label{DEF:ILX}\cite{ILX}
 	A {\bf quasi-Poisson groupoid} is a triple $(\Gpd, P,\Phi)$, where  $\Gpd$ is a groupoid  whose Lie algebroid is $A$,  $P\in \mathfrak{X}_{\mult}^2(\Gpd)$,    $\Phi$   $\in \Gamma(\wedge^3A)$,  and they are compatible in the sense that
 	\begin{eqnarray*}\label{quasipoisson1}
 	\frac{1}{2}[P,P]&=&\overrightarrow{\Phi}-\overleftarrow{\Phi},\\ \mbox{~and~}\qquad [P,\overrightarrow{\Phi}]&=&0.\label{quasipoisson2}
 	\end{eqnarray*}
 	
 \end{definition}
 
 We shall show in Section \ref{Sec:fromQPG} how a quasi-Poisson groupoid gives rise to a weak Lie $2$-algebra and a graded weak Lie $2$-algebra. See below for a summary of our main results and  Section \ref{Sec:PreAlg} for   precise definitions of the related algebraic objects.

  \emph{      $\bullet$ The two main results.} 
  In this paper, our focus is on the study of multiplicative forms on quasi-Poisson groupoids and their interactions with the given quasi-Poisson structure. We start by associating a canonical weak Lie $2$-algebra (Theorem \ref{Thm:case1}) with any quasi-Poisson groupoid $(\mathcal{G},P,\Phi)$. This triple consists of  
   \[\Omega^1(M)\xrightarrow{J}\Omega^1_{\mult}(\mathcal{G}),\qquad J(\gamma):=s^*\gamma-t^*\gamma.\]  
  Here $\Omega^1(M)$ is the space  of   $1$-forms on the base manifold $M$, and $\Omega^1_{\mult}(\mathcal{G})$ is the space of multiplicative  $1$-forms on the groupoid $\Gpd$.
  
  One important feature of our result is the construction of a homotopy map ($3$-bracket) $$[~\cdot~,~\cdot~,~\cdot~]_3: \wedge^3 \Omega^1_{\mult}(\mathcal{G})\to \Omega^1(M).$$ This is not immediately evident, but can be expressed explicitly in Equation \eqref{3br}. Furthermore, we extend the above weak Lie $2$-algebra to a graded weak Lie $2$-algebra (Theorem \ref{Thm:secondmain} ~(i)) --- a triple of graded objects $\Omega^\bullet(M)\xrightarrow{J} \Omega^\bullet_{\mult}(\mathcal{G})$ where $J$ is as defined   in the same fashion previously. In this case, the homotopy map takes the form $$[~\cdot~,~\cdot~,~\cdot~]: \Omega^p_{\mult}(\mathcal{G})\wedge \Omega^q_{\mult}(\mathcal{G})\wedge \Omega^s_{\mult}(\mathcal{G})\to \Omega^{p+q+s-2}(M)$$ and has a more intricate construction.

  \emph{  $\bullet$  Passing to IM $1$-forms.} 
 The infinitesimal counterpart of a multiplicative $k$-form on $\Gpd$ is the notion of IM $k$-form of the tangent Lie algebroid $A$ of $\Gpd$; see \cites{BC}. Quasi-Lie bialgebroids, on the other hand, are infinitesimal replacements of quasi-Poisson groupoids \cite{ILX}. This suggests a natural expectation for an analogy of our main Theorem \ref{Thm:case1} --- a weak Lie $2$-algebra underlying IM $1$-forms associated with a quasi-Lie bialgebroid.  In Section \ref{Sec:IM1forms}, we explicitly construct a weak Lie $2$-algebra underlying IM $1$-forms associated with a quasi-Lie bialgebroid. Furthermore, we demonstrate the compatibility of this structure with the groupoid-level objects.

   \emph{  $\bullet$  Future work.}   In this paper, our focus does not include an examination of how the Morita equivalence class of a quasi-Poisson groupoid affects weak Lie $2$-algebras. However, given that quasi-Poisson groupoids are $1$-shifted Poisson stacks, it is reasonable to anticipate that the weak Lie $2$-algebras we are analyzing give rise to a stacky object. Investigating this possibility is  one of the future research.     Moreover, we are intrigued by quasi-symplectic groupoids, which have an interesting connection with quasi-Poisson groupoids \cite{0801}. We owe Henrique Bursztyn thanks for bringing this relationship to our attention.

 \emph{  $\bullet$  Structure of the paper.}    In Section \ref{Sec:PreAlg} we recall  definitions of
curved DGLAs, cubic $L_\infty$-algebras, and weak Lie  $2$-algebras.    There we also define the notion of a ($\Z$-)graded weak Lie  $2$-algebra. The next Section \ref{Sec:fromQPG} is
devoted to stating and proving our main results, namely Theorems \ref{Thm:case1} and \ref{Thm:secondmain},  through a series of identities, and we have dedicated considerable effort towards establishing a number of lemmas and propositions.  In this
section we also establish morphisms between the many different algebraic structures, and study the special case of quasi-Poisson groups.   Section \ref{Sec:linearQP2GP} describes   {a demonstration model}, namely the linear quasi-Poisson  $2$-group arising from a Lie $2$-algebra.    This model looks  {easy} but is actually very informative. We calculate the corresponding various higher algebraic structures. Finally, 
in Section \ref{Sec:IM1forms}, we analyze the weak Lie $2$-algebra structure on IM $1$-forms of a quasi-Lie bialgebroid, and explore its relationship with the objects introduced in Section \ref{Sec:fromQPG}.

% \emph{  $\bullet$  Acknowledgments.}     Lang is grateful to Henrique Bursztyn, Marco Zambon, and Chenchang Zhu for useful discussions and  remarks on this work. We would like to express special appreciation to Ping Xu for providing valuable comments and suggestions that helped us to refine our ideas and improve the quality of this paper.

\section{Preliminaries of algebraic objects}\label{Sec:PreAlg}

\subsection{Curved DGLAs, cubic $L_\infty$-algebras, and weak Lie  $2$-algebras} 
Throughout the paper,  graded means $\mathbb{Z}$-graded.

 \begin{definition}\label{Def:Linfty-algebra} \cites{G, S,V}
 	A \textbf{curved $L_\infty$-algebra}  is a graded vector space $\g$ equipped with a collection of skew-symmetric multilinear maps $[~\cdots~]_k: \Lambda^k \g \to \g $ of degree $(2-k)$, for all $k\geqslant  $0$ $, such that the (higher) Jacobi identities
 	\begin{equation}\label{Jacobi identities}
 	\sum \limits_{i=0}^n \sum \limits_{\sigma \in {\rm Sh}(i,n-i) }(-1)^{i(n-i)}\chi(\sigma;x_1,~\cdots~,x_n) [[x_{\sigma(1)},~\cdots~,x_{\sigma(i)}]_i,x_{\sigma(i+1)},~\cdots~, x_{\sigma(n)}]_{n-i+1}=0,
 	\end{equation}
 	hold for all homogeneous elements $x_1,~\cdots~,x_n \in V$  and $n\geqslant  0$. If the $0$-bracket $[]_0$ (an element in $\g_2$) vanishes, the curved $L_\infty$-structure is called flat, or uncurved, and we simply call $\g$ an {\bf $L_\infty$-algebra}.
 \end{definition}

 Here the symbol $\mathrm{Sh}(p, \, q)$ denotes the set of $(p, \, q)$-unshuffles. 
 	Note that in the literature there are different conventions about the sign $(\pm 1)$ in Equation \eqref{Jacobi identities}. 
  
 \textbf{Notation:}  It is common to write the unary bracket $[\inputvariable]_1$ as $d$, which is a degree $1$ endomorphism on $\g$. We also prefer to use the symbol $c  $ to denote the $0$-bracket, which is an element in   $\g_2$.
 
 In the current paper, we will encounter some particular cases of curved $L_\infty$-algebras.
\subsubsection*{ $\bullet$ Curved DGLA} 
	  \emph{
	If a curved $L_\infty$-algebra $\g$ whose $k$-brackets vanish for all $k\geqslant 3$, then $\g$  is known as a {\bf curved  DGLA}. In this situation, the Jacobi identities are the following:
	\begin{itemize}
		\item[-]  $d(c)=0$;
		\item[-]  $d^2(x)=-[c,x]_2$;
		\item[-]  $d[x_1,x_2]_2=[d x_1,x_2]_2+(-1)^{\degree{x_1}\degree{x_2}}[dx_2, x_1]_2$; 
		\item[-]  $[[x_1,x_2]_2,x_3]_2+(-1)^{1+\degree{x_2}~\cdot~\degree{x_3}}[[x_1,x_3]_2,x_2]_2
		+(-1)^{\degree{x_1}(\degree{x_2}+\degree{x_3})}[[x_2,x_3]_2,x_1]_2=0$.
	\end{itemize}}

The following  example is well-known. \begin{example}\label{Example:NP}
Let $N$ be a manifold and  $P\in \mathfrak{X}^2(N)$ a   bivector field. Then the space of multivector fields on $N$ forms a curved DGLA: $(\mathfrak{X}^\bullet(N)[1],c,d_P,[~\cdot~,~\cdot~]_2)$,  where  $c=\frac{1}{2}[P,P] \in \mathfrak{X}^3(N)$, $d_P:=[P,~\cdot~]$, and $[~\cdot~,~\cdot~]_2$ is the Schouten bracket.	 Here we emphasize that the convention of degree on $ \mathfrak{X}^\bullet(N)[1]$ is by setting $\mathrm{deg} (\mathfrak{X}^k(N)[1]):=k-1$.  

\end{example}

\subsubsection*{ $\bullet$ Curved cubic $L_\infty$-algebra} \emph{ If a curved $L_\infty$-algebra $\g$ whose $k$-brackets vanish for all $k\geqslant 4$, then $\g$  is known as a {\bf curved cubic $L_\infty$-algebra}.}
For example,  a curved cubic $L_\infty$-algebra arises from any split Courant algebroid \cite{Costa}.

	\subsubsection*{ $\bullet$ Cubic $L_{\infty}$-algebra} \emph{
	 When an $L_\infty$-algebra has all trivial brackets except  $[\inputvariable]_1 =d$, $[~\cdot~,~\cdot~]_2$, and $[~\cdot~,~\cdot~,~\cdot~]_3$,  it is called  a \textbf{cubic $L_{\infty}$-algebra}. The $3$-bracket $[~\cdot~,~\cdot~,~\cdot~]_3$ is also called the \textit{homotopy} map.} 
	 
	 \begin{example}\cite{G}*{Theorem 5.2} \label{Example:NP2}
	 We now recall a construction of a cubic $L_{\infty}$-algebra associated to the aforementioned $P\in \mathfrak{X}^2(N)$. Indeed, on the space $\Omega^1(N)$ of $1$-forms, there is a skew-symmetric bracket, called the $P$-bracket: 
	  \begin{equation}\label{Eqt:Pbracket1forms}[\alpha,\beta]_P=\mathcal{L}_{P^\sharp \alpha} \beta -\mathcal{L}_{P^\sharp \beta} \alpha-dP(\alpha,\beta)
	  %=dP(\alpha,\beta) + \iota_{P^\sharp \alpha}  d\beta-\iota_{P^\sharp \beta}  d\alpha,
	  \qquad \forall \alpha,\beta\in \Omega^1(N),\end{equation}
	  where   $P^\sharp:T^*N\to TN$  sends $\alpha\in \Omega^1(N)$  to $ \iota_{\alpha}P$. The $P$-bracket can be defined on  forms of all degrees by the Leibniz rule.
	  Then the quadruple $(\Omega^\bullet(N)[1],d, [~\cdot~,~\cdot~]_P, [~\cdot~,~\cdot~,~\cdot~]_3)$ constitutes  a cubic $L_\infty$-algebra, where $d$ is the de Rham differential, $[~\cdot~,~\cdot~,~\cdot~]_3: \Omega^p(N)\wedge \Omega^q(N)\wedge \Omega^s(N)\to \Omega^{p+q+s-3}(N)$ is defined by \[[\Theta_1,\Theta_2,\Theta_3]_3 =\iota_{\frac{1}{2}[P,P]} (\Theta_1\wedge \Theta_2\wedge\Theta_3),\qquad \Theta_i\in \Omega^1(N)\]
	  on $1$-forms and extended to all forms by requiring the Leibniz rule on each argument. 	 \end{example}
	 
	  { The two examples \ref{Example:NP} and \ref{Example:NP2} are adapted to the case of multiplicative forms on a Lie groupoid --- see Proposition \ref{Lin}. }

\subsubsection*{ $\bullet$ Weak Lie $2$-algebra}  	   Following the terminology of \cite{Baez}, \emph{
 a {\bf weak Lie $2$-algebra}  is a  $2$-term $L_\infty$-algebra concentrated in degrees $(-1)$ and $0$, i.e., $\g=\thetaalgebradegone\oplus \galgebradegzero$ where $\thetaalgebradegone=\g_{-1}$ and  $\galgebradegzero= \g_0$.}
 In this case, we have three structure maps, namely $d: \thetaalgebradegone\to \galgebradegzero$,   $[~\cdot~,~\cdot~]_2: \galgebradegzero\wedge \galgebradegzero\to \galgebradegzero$ and $\galgebradegzero\wedge \thetaalgebradegone\to \thetaalgebradegone$, and homotopy map $[~\cdot~,~\cdot~,~\cdot~]_3: \wedge^3 \galgebradegzero\to \thetaalgebradegone$; and they satisfy    the following compatibility conditions:
	for all $w,x,y,z \in \galgebradegzero$ and $u,v\in
	\thetaalgebradegone$,
	\begin{enumerate}
		\item[]  \begin{equation}\label{lie21}\bracketmapbracket{
				\bracketmapbracket{x}{y}}{z}+ \bracketmapbracket{
				\bracketmapbracket{y}{z}}{x}+\bracketmapbracket{
				\bracketmapbracket{z}{x}}{y}+d[x,y,z]_3=0;
		\end{equation}
		\item[]
		\begin{equation}\label{lie22}
			\bracketmapbracket{
				\bracketmapbracket{x}{y}}{u}+ \bracketmapbracket{
				\bracketmapbracket{y}{u}}{x}+\bracketmapbracket{
				\bracketmapbracket{u}{x}}{y}+[x,y,du]_3=0;
		\end{equation}
		\item[]
		\begin{equation}\label{lie23}
			\bracketmapbracket{du}{v}+\bracketmapbracket{dv}{u}=0,\qquad d\bracketmapbracket{x}{u}=\bracketmapbracket{x}{du};
		\end{equation}
		\item[]\begin{eqnarray}\label{lie25}
		\nonumber
			&&
			-\bracketmapbracket{w}{[x,y,z]_3}
			-\bracketmapbracket{y}{[x,z,w]_3}
			+\bracketmapbracket{z}{[x,y,w]_3}
			+\bracketmapbracket{x}{[y,z,w]_3}
			\\\nonumber &=&
			[\bracketmapbracket{x}{y},z,w]_3-[\bracketmapbracket{x}{z},y,w]_3
			+[\bracketmapbracket{x}{w},y,z]_3+[\bracketmapbracket{y}{z},x,w]_3\\
			&&
			-[\bracketmapbracket{y}{w},x,z]_3+[\bracketmapbracket{z}{w},x,y]_3.
		\end{eqnarray}
		\end{enumerate}		
So, a weak Lie $2$-algebra  is  a particular instance  of cubic   $L_{\infty}$-algebras. 
Moreover, \emph{ if $[~\cdot~,~\cdot~,~\cdot~]_3=0$, then $\g$ is called a {\bf strict Lie $2$-algebra}, or simply a {\bf   Lie $2$-algebra}, or a {\bf Lie algebra crossed module}.} In this case, $\galgebradegzero$ is an ordinary Lie algebra and  it acts on $\thetaalgebradegone$ by setting $x\triangleright u:=[x,u]_2$. Moreover, $\thetaalgebradegone$ is equipped with an Lie bracket  $[u,v]:=[du,v]_2$, $\forall u,v\in \thetaalgebradegone$.
			
			For example, associated to a Lie algebra $\g$ the map $\g\to \Der(\g),x\mapsto [x,\cdot]$ forms a strict Lie  $2$-algebra. Here $\Der(\g)$ stands for derivations of $\g$. For any   real semi-simple Lie algebra  $\g$  with the Killing form $\langle\cdot,\cdot\rangle$, the datum 
			$\mathbb{R}\xrightarrow{0} \g$ is a weak Lie  $2$-algebra for which the $3$-bracket is defined by $[x,y,z]_3=\langle x,[y,z]\rangle$ for all $x,y,z\in \g$.

  For a   Lie algebroid $A$ over $M$, denote by $\Der(A)$ the set of derivations of $A$, i.e. $\mathbb{R}$-linear operators $\sigma: \Gamma(A)\to \Gamma(A)$ (with its symbol $X\in \mathfrak{X}(M)$) satisfying the following properties:  $\forall u,v\in \Gamma(A)$, $f\in \CinfM$
  \begin{itemize}
  	\item $\sigma(fu)=f\sigma(u)+X(f)u$;
  	\item $[X,\rho(u)](f)=\rho(\sigma(u))(f)$;
  	\item $\sigma[u,v]=[\sigma(u),v]+[u,\sigma(v)]$.
  \end{itemize}
  It can be easily verified that $\Gamma(A)\xrightarrow{}\Der(A), u\mapsto [u,~\cdot~]$ is a strict Lie  $2$-algebra.
\subsection{Graded weak Lie  $2$-algebras} 
Next, we generalize the notion of weak Lie $2$-algebras.  
\begin{definition}
A {\bf graded weak Lie  $2$-algebra} is a cubic  $L_\infty$-algebra $\g$ which is the direct sum of two graded subspaces $\galgebradegzero $ and $\thetaalgebradegone$ such that the structure maps $d$, $[~\cdot~,~\cdot~]_2$, and $[~\cdot~,~\cdot~,~\cdot~]_3$ of $\g$ are subject to the following conditions:
\begin{itemize}
	\item $d$ maps $\thetaalgebradegone$ to $\galgebradegzero$ and is trivial on $\galgebradegzero$;
	\item $[~\cdot~,~\cdot~]_2$ maps $\wedge^2 \galgebradegzero$ to $ \galgebradegzero$ and $\galgebradegzero\wedge \thetaalgebradegone$ to $ \thetaalgebradegone$;
	\item $[~\cdot~,~\cdot~,~\cdot~]_3$ maps $\wedge^3 \galgebradegzero$ to  $\thetaalgebradegone$.  
\end{itemize}
 \end{definition}

So,  weak Lie $2$-algebras are special graded weak Lie $2$-algebras, and the later are special 
cubic  $L_\infty$-algebras. 
 In the sequel, we denote   a graded weak Lie $2$-algebra by 
$\g=\crossedmoduletriple{\thetaalgebradegone}{d}{\galgebradegzero}$ to emphasize the key ingredients of $\g$. The bracket $[~\cdot~,~\cdot~]_2$  as a map $\galgebradegzero\wedge \thetaalgebradegone$ to $ \thetaalgebradegone$ would be referred to as the \textit{action} of  $\galgebradegzero$ on $\thetaalgebradegone$, and  we use the more implicit  notation $\triangleright$, although it is not an honest action of Lie algebras.  Again, the $3$-bracket $[~\cdot~,~\cdot~,~\cdot~]_3$ is also called the \textit{homotopy} map. %and denoted by $h(\cdot~,~\cdot~,~\cdot)$ if there is no risk of confusion. 

Moreover, if   $ [~\cdot~,~\cdot~,~\cdot~]_3=0$, then we call $\g$   a   {\bf strict graded   Lie  $2$-algebra}, or simply a  {\bf graded   Lie  $2$-algebra}, or {\bf a graded Lie algebra crossed module}; and in this case,   $\galgebradegzero$ is a graded Lie algebra,   $\triangleright$ is an action indeed, and   $\thetaalgebradegone$ admits an induced graded Lie algebra structure.

An interesting instance of graded Lie  $2$-algebra is the following.
 
\begin{proposition}\label{exBCLX}\cite{BCLX} Let $\Gpd$ be a Lie groupoid.
	{The space $\XkmultG{\bullet}$ of multiplicative multivector fields on $\Gpd$ is  a graded Lie algebra after degree shifts, denoted by $\XkmultG{\bullet}[1]$,   the Schouten bracket being its structure map. Moreover, the map
		\[\Gamma(\wedge^\bullet A)[1]\xrightarrow{T} \XkmultG{\bullet}[1],\qquad u\mapsto \overleftarrow{u}-\overrightarrow{u} \]
	together with the action $\triangleright$ of $\XkmultG{\bullet}[1] $ on $\Gamma(\wedge^\bullet A[1])$ given by 
		\[\overleftarrow{X\triangleright u}=[X,\overleftarrow{u}] \qquad (\mathrm{or } ~ \overrightarrow{X\triangleright u}=[X,\overrightarrow{u}]),\qquad X\in \XkmultG{k}, u\in \Gamma(\wedge^l A)\]
	gives rise to a graded Lie $2$-algebra.}
When  concentrated in degree $0$ parts, it becomes the Lie $2$-algebra $\Gamma(A)\xrightarrow{T} \mathfrak{X}^1_{\mult}(\Gpd)$.
\end{proposition}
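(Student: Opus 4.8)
The plan is to recognize the claim as the assertion that the data $\bigl(\Gamma(\wedge^\bullet A)[1]\xrightarrow{T}\XkmultG{\bullet}[1],\,\triangleright\bigr)$ is a graded Lie algebra crossed module, i.e. a strict graded Lie $2$-algebra in the sense defined above (the case $[\cdot,\cdot,\cdot]_3=0$). Concretely I must produce: (i) a graded Lie algebra structure on $\XkmultG{\bullet}[1]$; (ii) a graded Lie algebra structure on $\Gamma(\wedge^\bullet A)[1]$; (iii) a graded action $\triangleright$ of the former on the latter by derivations; and then verify the two crossed-module identities, namely equivariance $T(X\triangleright u)=[X,Tu]$ and the Peiffer identity $T(u)\triangleright v=[u,v]$. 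For (i) I use that the Schouten bracket of two multiplicative multivector fields is again multiplicative, together with the degree convention $\deg\XkmultG{k}[1]=k-1$ (as in Example \ref{Example:NP}), which turns the degree $(-1)$ Schouten bracket into a degree $0$ bracket; for (ii) the graded Lie algebra is simply $\Gamma(\wedge^\bullet A)[1]$ with the Schouten bracket of $A$.

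The structural input I would isolate first consists of three standard identities relating left- and right-invariant multivector fields to the Schouten bracket on $A$, namely $[\overleftarrow{u},\overleftarrow{v}]=\overleftarrow{[u,v]}$, $[\overrightarrow{u},\overrightarrow{v}]=-\overrightarrow{[u,v]}$, and $[\overleftarrow{u},\overrightarrow{v}]=0$, together with the fact (immediate from the vanishing of the mixed bracket, or directly) that $T(u)=\overleftarrow{u}-\overrightarrow{u}$ is multiplicative, so that $T$ is well defined with image in $\XkmultG{\bullet}[1]$ and is of degree $0$.

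The crux, and the step I expect to be the main obstacle, is the well-definedness of the action. I must show that for a multiplicative $X\in\XkmultG{\bullet}$ and $u\in\Gamma(\wedge^\bullet A)$ the bracket $[X,\overleftarrow{u}]$ is again left-invariant, defining $X\triangleright u\in\Gamma(\wedge^\bullet A)$ by $\overleftarrow{X\triangleright u}=[X,\overleftarrow{u}]$, and simultaneously that the very same section satisfies $\overrightarrow{X\triangleright u}=[X,\overrightarrow{u}]$ (this is exactly the content of the parenthetical ``or'' in the statement). The conceptual reason is that a multiplicative multivector field is an infinitesimal groupoid automorphism: its local flow consists of groupoid morphisms, each of which carries left-invariant fields to left-invariant fields and right-invariant fields to right-invariant fields, inducing one and the same derivation $\mathcal{D}_X$ of the Schouten algebra $\Gamma(\wedge^\bullet A)$. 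Differentiating at $t=0$ yields $[X,\overleftarrow{u}]=\overleftarrow{\mathcal{D}_X u}$ and $[X,\overrightarrow{u}]=\overrightarrow{\mathcal{D}_X u}$ with the \emph{same} $\mathcal{D}_X u$, and I set $X\triangleright u:=\mathcal{D}_X u$. I would make this precise either through the flow argument or, to stay purely algebraic, through the characterization of multiplicativity via the tangent and cotangent groupoids together with the source/target projectability of $X$; this invariance is precisely where the hypothesis that $X$ be multiplicative is genuinely used.

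With these in hand the remaining checks are short and formal. The map $T$ is a graded Lie algebra homomorphism: expanding $[Tu,Tv]=[\overleftarrow{u}-\overrightarrow{u},\overleftarrow{v}-\overrightarrow{v}]$ and using the three structural identities (the mixed terms vanish) collapses it to $\overleftarrow{[u,v]}-\overrightarrow{[u,v]}=T[u,v]$. Equivariance follows from bilinearity of the bracket and the well-definedness above: $[X,Tu]=[X,\overleftarrow{u}]-[X,\overrightarrow{u}]=\overleftarrow{X\triangleright u}-\overrightarrow{X\triangleright u}=T(X\triangleright u)$. The Peiffer identity is the one computation that also invokes $[\overleftarrow{u},\overrightarrow{v}]=0$: from $\overleftarrow{T(u)\triangleright v}=[\overleftarrow{u}-\overrightarrow{u},\overleftarrow{v}]=\overleftarrow{[u,v]}-0$ one reads off $T(u)\triangleright v=[u,v]$. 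That $\triangleright$ is a graded action by derivations is a direct consequence of the graded Jacobi identity of the Schouten bracket applied to $[[X,Y],\overleftarrow{u}]$ and of its Leibniz rule. Finally, restricting everything to shifted degree $0$, i.e. to $\XkmultG{1}[1]$ and $\Gamma(\wedge^1 A)[1]=\Gamma(A)$, the $3$-bracket is absent by construction and one recovers precisely the strict Lie $2$-algebra $\Gamma(A)\xrightarrow{T}\mathfrak{X}^1_{\mult}(\Gpd)$ of Berwick-Evans and Lerman.
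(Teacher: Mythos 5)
The paper does not actually prove this proposition: it is quoted verbatim from \cite{BCLX} (and the Lie groupoid case of the degree-$0$ part from \cite{BEL}), so there is no internal argument to compare against. Your proposal supplies a self-contained proof, and it is the standard one: identify the claim as a strict graded crossed module, reduce everything to the three structural identities $[\overleftarrow{u},\overleftarrow{v}]=\overleftarrow{[u,v]}$, $[\overrightarrow{u},\overrightarrow{v}]=-\overrightarrow{[u,v]}$, $[\overleftarrow{u},\overrightarrow{v}]=0$, prove well-definedness of $\triangleright$, and then verify equivariance of $T$, the Peiffer identity, and the action property via graded Jacobi. All of these computations are correct, including the signs (the minus sign on right-invariant extensions is exactly what makes $T$ a homomorphism and kills the mixed terms), and your reduction to degree $0$ recovers the Berwick-Evans--Lerman Lie $2$-algebra as claimed.

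One point deserves sharpening, and you half-acknowledge it yourself. You present the flow argument as "the conceptual reason" that $[X,\overleftarrow{u}]$ is left-invariant with the same $\mathcal{D}_X u$ appearing on the right-invariant side. This argument is only literally available for $X\in\XkmultG{1}$: a multiplicative $k$-vector field with $k\geqslant 2$ has no flow, so for the graded statement you must carry out the alternative you mention in passing — the algebraic characterization of multiplicativity (e.g.\ via the groupoid $\mathbb{G}^{(k,0)}$ of Definition 1.2, or equivalently the correspondence between multiplicative $k$-vector fields and $k$-differentials established in \cite{ILX}), from which left/right-invariance of $[X,\overleftarrow{u}]$ and $[X,\overrightarrow{u}]$ and the equality of the two induced sections follow. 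Relatedly, multiplicativity of $T(u)=\overleftarrow{u}-\overrightarrow{u}$ does not follow from $[\overleftarrow{u},\overrightarrow{v}]=0$; it is a separate (standard, direct) verification, which your "or directly" covers. With those two steps made precise in the algebraic form, your proof is complete and is, in substance, the argument of \cite{BCLX} that the paper outsources.
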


\begin{definition}  A {\bf morphism} of graded weak Lie $2$-algebras from  $\crossedmoduletriple{\thetaalgebradegone}{d}{\galgebradegzero}$ to  $\crossedmoduletriple{\thetaalgebradegone'}{d'}{\galgebradegzero'}$  consists of 
	\begin{itemize}
		\item a degree $0$ chain map $F_1=(F_\galgebradegzero,F_\thetaalgebradegone)$, namely, $F_\galgebradegzero: \galgebradegzero \to \galgebradegzero' $ and $F_\thetaalgebradegone:\thetaalgebradegone \to \thetaalgebradegone' $ such that $F_\galgebradegzero \circ d=d'\circ F_\thetaalgebradegone$,
		\item a degree ($-1$) graded skew-symmetric bilinear map $F_2 : \galgebradegzero\wedge \galgebradegzero\to \thetaalgebradegone'$,
		such that the following equations hold for $x,y,z\in \galgebradegzero$ and $u\in \thetaalgebradegone$:
	\end{itemize}
	\begin{itemize}
		\item[\rm {(1)}] $F_\galgebradegzero[x,y]_2-[F_\galgebradegzero(x),F_\galgebradegzero(y)]'_2=d' F_2 (x,y)$,
		\item[\rm {(2)}] $F_\thetaalgebradegone [x,u]_2-[F_\galgebradegzero(x),F_\thetaalgebradegone(u)]'_2=(-1)^{|x|}F_2(x,d(u))$,
		\item[\rm {(3)}] $F_\thetaalgebradegone [x,y,z]_3-[F_\galgebradegzero(x),F_\galgebradegzero(y),F_\galgebradegzero(z)]'_3=[F_\galgebradegzero(x),F_2 (y,z)]'_2-F_2 ([x,y]_2,z)+c.p.$.
	\end{itemize}
	%When $\crossedmoduletriple{\thetaalgebradegone}{\phi}{\galgebradegzero}$ and $\crossedmoduletriple{\thetaalgebradegone'}{\phi}{\galgebradegzero'}$ are weak Lie  $2$-algebras,  we recover the definition of morphisms of weak Lie  $2$-algebras.

	\end{definition} We can express the morphism as described above  more vividly with a diagram: 
\begin{equation*}
	\xymatrix{
		\thetaalgebradegone \ar@{^{}->}[d]_{d} \ar[rr]^{F_\thetaalgebradegone} && \thetaalgebradegone' \ar@{^{}->}[d]^{d'} \\
		\galgebradegzero \ar[rr]^{F_\galgebradegzero}\ar@{.>}[rru]^{F_2} && \galgebradegzero'.}
\end{equation*}

\section{Multiplicative forms on quasi-Poisson groupoids}\label{Sec:fromQPG}

  In this part, we study  higher structures stemming from a smooth manifold $N$ and a bivector field $P\in \mathfrak{X}^2 (N)$.  Recall the skew-symmetric $P$-bracket $[{\tobefilledin},{\tobefilledin}]_P$ on $\Omega^1(N)$   defined by \eqref{Eqt:Pbracket1forms}.
 %\begin{equation}\label{Eqt:Pbracket1forms}[\alpha,\beta]_P=\mathcal{L}_{P^\sharp \alpha} \beta -\mathcal{L}_{P^\sharp \beta} \alpha-dP(\alpha,\beta)
 %=dP(\alpha,\beta) + \iota_{P^\sharp \alpha}  d\beta-\iota_{P^\sharp \beta}  d\alpha,\qquad \forall \alpha,\beta\in \Omega^1(N),\end{equation}
% and a  map   $P^\sharp:T^*N\to TN$ called anchor, by sending $\alpha\in \Omega^1(N)$  to $ \iota_{\alpha}P$. 
We have two key formulas  \cite{Kosmann2}:
 \begin{eqnarray}\label{quasiim}
 [\alpha_1,[\alpha_2,\alpha_3]_P]_P+c.p.=-\frac{1}{2} L_{[P,P](\alpha_1,\alpha_2,\tobefilledin)} \alpha_3+c.p.+d([P,P](\alpha_1,\alpha_2,\alpha_3)),\qquad \forall \alpha_i\in \Omega^1(N),
 \end{eqnarray} 
 and  
 \begin{equation}\label{Eqt:PsharpwrtPbracket}
 P^\sharp[\alpha_1,\alpha_2]_P-[P^\sharp \alpha_1,P^\sharp \alpha_2]=\frac{1}{2}[P,P](\alpha_1,\alpha_2),\qquad \forall \alpha_i\in \Omega^1(N).\end{equation}
 Note that the bracket $[~\cdot~,~\cdot~]_P$ extends to all forms by using the Leibniz rule.

\subsection{The weak Lie $2$-algebra arising from a quasi-Poisson groupoid}~We now  turn to a general Lie groupoid $\Gpd$ with base manifold $M$. As usual, $A:=\ker(s_*)|_{M}$ stands for the tangent Lie algebroid of $\Gpd$. 

Recall from Proposition \ref{exBCLX} that 
the triple
\[\Gamma(A)\xrightarrow{T} \mathfrak{X}^1_{\mult}(\Gpd),\qquad T(u):=\overleftarrow{u}-\overrightarrow{u}\] 
forms a Lie $2$-algebra, where the Lie bracket on $\mathfrak{X}^1_{\mult}(\Gpd)$  is the Schouten bracket $[~\cdot~,~\cdot~]$ and the action $\triangleright: \mathfrak{X}^1_{\mult}(\Gpd)\wedge \Gamma(A)\to \Gamma(A)$ is determined by $\overleftarrow{X\triangleright u}=[X,\overleftarrow{u}]$ for $X\in \mathfrak{X}^1_{\mult}(\Gpd)$ and $u\in \Gamma(A)$. 

  We shift our focus to multiplicative $1$-forms on $\Gpd$, and we have  a parallel result explained below    ---   To any quasi-Poisson groupoid  is associated a canonical weak Lie $2$-algebra.

\begin{theorem}\label{Thm:case1}
Let $(\mathcal{G},P,\Phi)$ be a quasi-Poisson groupoid. Then the triple  
\[\Omega^1(M)\xrightarrow{J}\Omega^1_{\mult}(\mathcal{G}),\qquad J(\gamma):=s^*\gamma-t^*\gamma,\] forms a weak Lie $2$-algebra, where the bracket on $\Omega^1_{\mult}(\mathcal{G})$ is $[~\cdot~,~\cdot~]_P$, the action 
\[\triangleright: \Omega^1_{\mult}(\mathcal{G})\wedge \Omega^1(M)\to \Omega^1(M)\]
and the homotopy map 
\[ [~\cdot~,~\cdot~,~\cdot~]_3: \wedge^3 \Omega^1_{\mult}(\Gpd)\to \Omega^1(M)\] are determined by 
\begin{eqnarray}\label{brform}
s^*(\Theta\triangleright \gamma)&=&[\Theta,s^*\gamma]_P,
\end{eqnarray}
and \begin{eqnarray} s^*[\Theta_1,\Theta_2,\Theta_3]_3&=&L_{\overleftarrow{\Phi}\nonumber(\Theta_1,\Theta_2,~\cdot~)} \Theta_3+c.p.-2d\overleftarrow{\Phi}(\Theta_1,\Theta_2,\Theta_3)\\ &=&d\overleftarrow{\Phi}(\Theta_1,\Theta_2,\Theta_3)+\big(\iota_{\overleftarrow{\Phi}(\Theta_1,\Theta_2)} d\Theta_3+c.p.)\label{3br}
\end{eqnarray}
respectively.
\end{theorem}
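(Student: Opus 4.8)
The plan is to build all four structure maps out of the cubic $L_\infty$-algebra $(\Omega^\bullet(\Gpd)[1], d, [~\cdot~,~\cdot~]_P, [~\cdot~,~\cdot~,~\cdot~]_3)$ attached by Example~\ref{Example:NP2} to the manifold $N=\Gpd$ with its multiplicative bivector $P$, and then to convert the ambient $L_\infty$-identities into the weak Lie $2$-algebra axioms \eqref{lie21}--\eqref{lie25} by means of the quasi-Poisson splitting $\half[P,P]=\overrightarrow{\Phi}-\overleftarrow{\Phi}$ and the map $J=s^*-t^*$. The guiding principle is that $\overleftarrow{\Phi}$ feeds the $s^*$-part and $\overrightarrow{\Phi}$ the $t^*$-part, so that every ambient term containing $[P,P]=2(\overrightarrow{\Phi}-\overleftarrow{\Phi})$ reorganizes as $J$ applied to a reduced object. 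Since $s$ is a surjective submersion, $s^*$ is injective; hence any identity with target $\Omega^1(M)$ may be verified after $s^*$, i.e. as an equality of honest $1$-forms on $\Gpd$, and the two key formulas \eqref{quasiim}, \eqref{Eqt:PsharpwrtPbracket} (for $N=\Gpd$) become the engine of every computation.

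First I would dispatch well-definedness. (a) $J(\gamma)=s^*\gamma-t^*\gamma$ is multiplicative: use $s\circ m=s\circ\mathrm{pr}_2$, $t\circ m=t\circ\mathrm{pr}_1$, and $s\circ\mathrm{pr}_1=t\circ\mathrm{pr}_2$ on $\Gpd^{(2)}$. (b) $\Omega^1_{\mult}(\Gpd)$ is closed under $[~\cdot~,~\cdot~]_P$, since $P$ multiplicative makes $P^\sharp$ a morphism of the cotangent and tangent groupoids; this is a preparatory lemma. (c) The crucial projectability lemma: for multiplicative $\Theta$ the $1$-form $[\Theta,s^*\gamma]_P$ is the $s^*$ of a unique $\Theta\triangleright\gamma\in\Omega^1(M)$, and moreover $[\Theta,t^*\gamma]_P=t^*(\Theta\triangleright\gamma)$ for the same $1$-form. (d) The right-hand side of \eqref{3br} is likewise $s^*$-projectable; here I would first check, via $\mathcal{L}_X=\iota_X d+d\iota_X$ and total antisymmetry of $\overleftarrow{\Phi}(\Theta_1,\Theta_2,\Theta_3)$, that the two displayed expressions in \eqref{3br} coincide (the three $d\overleftarrow{\Phi}(\Theta_1,\Theta_2,\Theta_3)$ terms collapse to one), then record the $t^*$-version with $\overleftarrow{\Phi}$ replaced by $\overrightarrow{\Phi}$, obtained from the groupoid inversion symmetry $s\leftrightarrow t$, $\overleftarrow{\Phi}\leftrightarrow\overrightarrow{\Phi}$.

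With these lemmas, the axioms fall out as follows. Equation \eqref{lie23} is immediate: its skew-symmetry part is the skew-symmetry of $[~\cdot~,~\cdot~]_P$ after $s^*$, and the relation $J(\Theta\triangleright\gamma)=[\Theta,J\gamma]_P$ is exactly the combination of the $s^*$- and $t^*$-statements in (c). For \eqref{lie21} I feed $\Theta_1,\Theta_2,\Theta_3\in\Omega^1_{\mult}(\Gpd)$ into \eqref{quasiim}: the Jacobiator of $[~\cdot~,~\cdot~]_P$ equals $-\half\mathcal{L}_{[P,P](\Theta_1,\Theta_2,~\cdot~)}\Theta_3+c.p.+d([P,P](\Theta_1,\Theta_2,\Theta_3))$; substituting $\half[P,P]=\overrightarrow{\Phi}-\overleftarrow{\Phi}$ splits the right-hand side into a left-invariant piece that assembles, via \eqref{3br}, into $s^*[\Theta_1,\Theta_2,\Theta_3]_3$ and a right-invariant piece giving $-t^*[\Theta_1,\Theta_2,\Theta_3]_3$, so the Jacobiator equals $\pm J[\Theta_1,\Theta_2,\Theta_3]_3$, which is precisely \eqref{lie21}. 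Equation \eqref{lie22} is the same computation with one argument specialized to $J\gamma$, compatible with \eqref{lie21} through \eqref{lie23}; here \eqref{Eqt:PsharpwrtPbracket} is used to trade $P^\sharp$ of a $P$-bracket for a $[P,P]$-term. Throughout, the second quasi-Poisson condition $[P,\overrightarrow{\Phi}]=0$ --- equivalently $[P,\overleftarrow{\Phi}]=0$ by graded Jacobi applied to $[P,[P,P]]=0$ --- makes both $\overrightarrow{\Phi}$ and $\overleftarrow{\Phi}$ closed for $d_P=[P,~\cdot~]$, which is what keeps the left/right splitting consistent.

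The main obstacle is the coherence identity \eqref{lie25}, the pentagon relating the action $\triangleright$ and the homotopy $[~\cdot~,~\cdot~,~\cdot~]_3$. Unlike the ambient cubic $L_\infty$-algebra, whose top coherence is automatic from $[P,[P,P]]=0$, here the reduced homotopy is the $1$-form \eqref{3br} built from $\overleftarrow{\Phi}$ alone, so \eqref{lie25} is a genuinely new identity: I would expand both sides using \eqref{3br} and the action \eqref{brform}, push everything to $\Omega^1(\Gpd)$ by $s^*$, and reduce the six homotopy terms against the four action terms using Cartan calculus, repeated application of \eqref{Eqt:PsharpwrtPbracket}, and decisively the closedness $[P,\overleftarrow{\Phi}]=0$. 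Establishing the source--target projectability lemma (c)--(d) and then grinding \eqref{lie25} are the two places where the real work concentrates; once the correspondence ``$\overleftarrow{\Phi}\leftrightarrow s^*$, $\overrightarrow{\Phi}\leftrightarrow t^*$'' is installed, the remaining axioms are essentially bookkeeping.
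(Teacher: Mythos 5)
Your proposal follows essentially the same route as the paper's proof: well-definedness of the action and homotopy via $s^*$-injectivity and the invariance identities $\overleftarrow{\Phi}(\Theta_1,\Theta_2,\cdot)=\overleftarrow{\Phi(\theta_1,\theta_2,\cdot)}$, axioms \eqref{lie21}--\eqref{lie22} from the Jacobiator formula \eqref{quasiim} combined with the splitting $\tfrac{1}{2}[P,P]=\overrightarrow{\Phi}-\overleftarrow{\Phi}$, and the coherence \eqref{lie25} reduced by Cartan calculus to the vanishing of $[P,\overleftarrow{\Phi}]$. You also correctly identify the two genuinely laborious points (the source/target projectability and the \eqref{lie25} computation), which is exactly where the paper concentrates its effort, so the plan is sound and matches the published argument.
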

 \begin{proof}  
  We first show that the homotopy map $[~\cdot~,~\cdot~,~\cdot~]_3$ given by Equation \eqref{3br} is well-defined. In fact, for $\Theta_i\in \OmegakmultG{1}$,  by \cite{CLL}*{Lemmas 3.5 and 3.8}, we have the following equalities:
\begin{eqnarray}
\label{eqeq1}\overleftarrow{\Phi}(\Theta_1,\Theta_2,\Theta_3)&=&s^*\Phi(\theta_1,\theta_2,\theta_3),\qquad \overrightarrow{\Phi}(\Theta_1,\Theta_2,\Theta_3)=t^*\Phi(\theta_1,\theta_2,\theta_3),\\ \label{eqeq2} \overleftarrow{\Phi}(\Theta_1,\Theta_2,\tobefilledin)&=&\overleftarrow{\Phi(\theta_1,\theta_2,\tobefilledin)},\qquad \overrightarrow{\Phi}(\Theta_1,\Theta_2,\tobefilledin)=\overrightarrow{\Phi(\theta_1,\theta_2,\tobefilledin)},
\end{eqnarray}
where $\theta_i=\mathrm{pr}_{A^*} \Theta_i|_M \in \Gamma(A^*)$.  Also for $u\in \Gamma(A)$ and $\alpha\in \Omega^k_{\mult}(\Gpd)$,  we have $\iota_{\overleftarrow{u}} \alpha=s^*\gamma$ for some $\gamma\in \Omega^{k-1} (M)$. So we see that the right hand side of \eqref{3br} must be of the form $s^*\mu$ where $\mu\in \Omega^1(M)$ is uniquely determined; and hence we simply define   $[\Theta_1,\Theta_2,\Theta_3]_3:=\mu$.   {Moreover, by applying $\mathrm{inv}^*$ on both sides of \eqref{3br}, we  obtain a parallel formula:}
\begin{eqnarray} t^*[\Theta_1,\Theta_2,\Theta_3]_3&=&L_{\overrightarrow{\Phi}\nonumber(\Theta_1,\Theta_2,~\cdot~)} \Theta_3+c.p.-2d\overrightarrow{\Phi}(\Theta_1,\Theta_2,\Theta_3)\\ &=&d\overrightarrow{\Phi}(\Theta_1,\Theta_2,\Theta_3)+\big(\iota_{\overrightarrow{\Phi}(\Theta_1,\Theta_2)} d\Theta_3+c.p.)\label{3brsecond}
\end{eqnarray}
For simplicity, we write $\Phi(\theta_1,\theta_2):=\Phi(\theta_1,\theta_2,~\cdot~)\in \Gamma(A)$
in the sequel.

Next, we verify one by one that what the theorem states  satisfies the axioms   \eqref{lie21} $\sim$ \eqref{lie25} of a weak Lie $2$-algebra:

\begin{itemize}
	\item  To see \eqref{lie21}, we use Equation \eqref{quasiim}, the fact $\frac{1}{2}[P,P]=\overrightarrow{\Phi}-\overleftarrow{\Phi}$, and Equations   \eqref{eqeq1} $\sim$ \eqref{3brsecond} to get
\begin{eqnarray*}
[\Theta_1,[\Theta_2,\Theta_3]_P]_P+c.p.&=&L_{(\overleftarrow{\Phi}-\overrightarrow{\Phi})(\Theta_1,\Theta_2)} \Theta_3+c.p.-2d (\overleftarrow{\Phi}-\overrightarrow{\Phi})(\Theta_1,\Theta_2,\Theta_3)\\ 
&=&d(\overleftarrow{\Phi}-\overrightarrow{\Phi})(\Theta_1,\Theta_2,\Theta_3)+\big(\iota_{(\overleftarrow{\Phi(\theta_1,\theta_2)}-\overrightarrow{\Phi(\theta_1,\theta_2)})}d \Theta_3+c.p.\big)\\ &=&(s^*-t^*)[\Theta_1,\Theta_2,\Theta_3]_3.
\end{eqnarray*}
 This is identically the desired relation.

\item To see \eqref{lie22}, we need   the following formula --- for any $\Theta_1,\Theta_2\in \Omega^1_{\mult}(\Gpd)$ and $\gamma\in \Omega^1(M)$, one has
\begin{eqnarray}\label{foraction}
[\Theta_1,[\Theta_2,s^*\gamma]_P]_P+[\Theta_2,[s^*\gamma,\Theta_1]_P]_P+[s^*\gamma,[\Theta_1,\Theta_2]_P]_P=s^*[\Theta_1,\Theta_2,s^*\gamma-t^*\gamma]_3.
\end{eqnarray}In fact, similar to the way to verify the equation above, we can turn the left hand side of Equation \eqref{foraction}   to 
\begin{eqnarray*}
&&-\frac{1}{2}d[P,P](\Theta_1,\Theta_2,s^*\gamma)-\frac{1}{2}\iota_{[P,P](\Theta_1,\Theta_2)} ds^*\gamma-\frac{1}{2}\iota_{[P,P](\Theta_2,s^*\gamma)} d\Theta_1-\frac{1}{2}\iota_{[P,P](s^*\gamma,\Theta_1)} d\Theta_2
\\ &=&d(\overleftarrow{\Phi}-\overrightarrow{\Phi})(\Theta_1,\Theta_2,s^*\gamma)+\iota_{(\overleftarrow{\Phi}-\overrightarrow{\Phi})(\Theta_1,\Theta_2)} ds^*\gamma+\iota_{(\overleftarrow{\Phi}-\overrightarrow{\Phi})(\Theta_2,s^*\gamma)} d\Theta_1+\iota_{(\overleftarrow{\Phi}-\overrightarrow{\Phi})(s^*\gamma,\Theta_1)} d\Theta_2\\
&=&-ds^*\Phi(\theta_1,\theta_2,\rho^*\gamma)-s^*\iota_{\rho\Phi(\theta_1,\theta_2)} d\gamma-\iota_{\overleftarrow{\Phi(\theta_2,\rho^*\gamma)}}d\Theta_1-\iota_{\overleftarrow{\Phi(\rho^*\gamma,\theta_1)}}d\Theta_2.
\end{eqnarray*}
Here we used \eqref{eqeq1}-\eqref{eqeq2} and the facts 
\begin{eqnarray}\label{strho}
s_*(\overleftarrow{u}-\overrightarrow{u})=s_*(\overleftarrow{u})=-\rho u,\qquad \mathrm{pr}_{ A^* } (s^*\gamma-t^*\gamma)|_M=-\rho^*\gamma\in \Gamma(A^*).
\end{eqnarray}
On the other hand, we have
\begin{eqnarray*}
&&s^*[\Theta_1,\Theta_2,s^*\gamma-t^*\gamma]_3\\ &=&d\overleftarrow{\Phi}(\Theta_1,\Theta_2,s^*\gamma-t^*\gamma)+\iota_{\overleftarrow{\Phi}(\Theta_1,\Theta_2)} d(s^*\gamma-t^*\gamma)+\iota_{\overleftarrow{\Phi}(\Theta_2,s^*\gamma-t^*\gamma)}d\Theta_1+\iota_{\overleftarrow{\Phi}(s^*\gamma-t^*\gamma,\Theta_1)} d\Theta_2\\ &=&
-ds^*\Phi(\theta_1,\theta_2,\rho^*\gamma)-s^*\iota_{\rho\Phi(\theta_1,\theta_2)} d\gamma-\iota_{\overleftarrow{\Phi(\theta_2,\rho^*\gamma)}}d\Theta_1-\iota_{\overleftarrow{\Phi(\rho^*\gamma,\theta_1)}}d\Theta_2.
\end{eqnarray*}
This verifies the desired \eqref{foraction}. By the definition of $\Theta\triangleright \gamma$ in \eqref{brform} and since $s^*$ is injective, \eqref{foraction} implies that
\[\Theta_1\triangleright(\Theta_2\triangleright \gamma)-\Theta_2\triangleright (\Theta_1\triangleright \gamma)-[\Theta_1,\Theta_2]_P\triangleright \gamma=[\Theta_1,\Theta_2,J\gamma]_3.\]
Hence one gets \eqref{lie22}. 

\item The axiom \eqref{lie23} can be verified directly. %implied by Theorem \ref{Thm:LiealgebracrossedmodulePoissonLiegroupoid}. 

\item  It is left to show \eqref{lie25}, namely, 
\begin{eqnarray}\label{23com}
\Theta_1\triangleright [\Theta_2,\Theta_3,\Theta_4]_3+c.p.-\big([[\Theta_1,\Theta_2]_P,\Theta_3,\Theta_4]_3+c.p.\big)=0,\qquad \Theta_i\in \Omega^1_{\mult}(\Gpd).
\end{eqnarray}
Indeed,  it follows from the relation $[P,\overleftarrow{\Phi}]=0$. Let us elaborate on this fact.  On the one hand,  for all $\Theta_i\in \Omega^1(\Gpd)$ (not necessarily multiplicative), we have 
\begin{eqnarray}\label{important}
\nonumber&&[P,\overleftarrow{\Phi}](\Theta_1,\Theta_2,\Theta_3,\Theta_4)=P\lrcorner d(\overleftarrow{\Phi}\lrcorner \Theta)-\overleftarrow{\Phi}\lrcorner d(P\lrcorner \Theta)+(P\wedge \overleftarrow{\Phi})\lrcorner d\Theta\\ \nonumber&=&
\big(\overleftarrow{\Phi}(\Theta_1,\Theta_2,\Theta_3)P(d\Theta_4)+P(d\overleftarrow{\Phi}(\Theta_1,\Theta_2,\Theta_3),\Theta_4)+c.p.(4)\big)\\ \nonumber&&-\big(P(\Theta_1,\Theta_2)\big(\overleftarrow{\Phi}(d\Theta_3,\Theta_4)-\overleftarrow{\Phi}(\Theta_3,d\Theta_4)\big)+\overleftarrow{\Phi}(dP(\Theta_1,\Theta_2),\Theta_3,\Theta_4)+c.p.(6)\big)
\\ \nonumber&&-\big(P(d\Theta_4)\overleftarrow{\Phi}(\Theta_1,\Theta_2,\Theta_3)+c.p.(4)\big)-\big((P^\sharp \Theta_1\wedge \overleftarrow{\Phi}(\Theta_2,\Theta_3))(d\Theta_4)+c.p.(12)\big)\\ \nonumber&&+\big((\overleftarrow{\Phi}(d\Theta_3,\Theta_4)-\overleftarrow{\Phi}(\Theta_3,d\Theta_4))P(\Theta_1,\Theta_2)+c.p.(6)\big)\\ \nonumber&=& {\big(P(d\overleftarrow{\Phi}(\Theta_1,\Theta_2,\Theta_3),\Theta_4)+c.p.(4)\big)}- {\big(\overleftarrow{\Phi}(dP(\Theta_1,\Theta_2),\Theta_3,\Theta_4)+c.p.(6)\big)}\\ &&-
 {(P^\sharp \Theta_1\wedge \overleftarrow{\Phi}(\Theta_2,\Theta_3))(d\Theta_4)+c.p.(12)},
\end{eqnarray}
where $c.p.(4)$ and $c.p.(6)$ stand for the $(3,1)$ and $(2,2)$-unshuffles respectively, and $c.p.(12)$ is the product of $(3,1)$ and $(2,1)$-unshuffles.
By straightforward computation, one can rewrite   Equation \eqref{important}  into a more concise form 
\begin{eqnarray}\label{important2}
\nonumber [P,\overleftarrow{\Phi}](\Theta_1,\Theta_2,\Theta_3,~\cdot~)&=&[P^\sharp(\Theta_3),\overleftarrow{\Phi}(\Theta_1,\Theta_2)]-\overleftarrow{\Phi}([\Theta_1,\Theta_2]_P,\Theta_3)+c.p.\\ &&
+P^\sharp(d\overleftarrow{\Phi}(\Theta_1,\Theta_2,\Theta_3))+P^\sharp\big(\iota_{\overleftarrow{\Phi}(\Theta_1,\Theta_2)} d\Theta_3+c.p.\big).
\end{eqnarray}
On the other hand, by applying 
 $s^*$   on the left hand side of Equation \eqref{23com} we get
\begin{eqnarray*}
&&[\Theta_1,d\overleftarrow{\Phi}(\Theta_2,\Theta_3,\Theta_4)+\big(\iota_{\overleftarrow{\Phi}(\Theta_2,\Theta_3)} d\Theta_4+c.p.(3)\big)]_P+c.p.(4)\\ &&-\big(d\overleftarrow{\Phi}([\Theta_1,\Theta_2]_P,\Theta_3,\Theta_4)+\iota_{\overleftarrow{\Phi}([\Theta_1,\Theta_2]_P,\Theta_3)} d\Theta_4+\iota_{\overleftarrow{\Phi}(\Theta_3,\Theta_4)} d[\Theta_1,\Theta_2]_P+\iota_{\overleftarrow{\Phi}(\Theta_4,[\Theta_1,\Theta_2]_P)} d\Theta_3+c.p.(6)\big)\\ &=&
\big( {dP(\Theta_1,d\overleftarrow{\Phi}(\Theta_2,\Theta_3,\Theta_4))}-\iota_{P^\sharp d\overleftarrow{\Phi}(\Theta_2,\Theta_3,\Theta_4)} d\Theta_1+c.p.(4)\big)\\ &&+\big(L_{P^\sharp \Theta_1} \iota_{\overleftarrow{\Phi}(\Theta_2,\Theta_3)} d\Theta_4-\iota_{P^\sharp \iota_{\overleftarrow{\Phi}(\Theta_2,\Theta_3)}d\Theta_4} d\Theta_1+c.p.(12)\big)
\\ &&-\big(d\overleftarrow{\Phi}( {\iota_{P^\sharp\Theta_1} d\Theta_2-\iota_{P^\sharp\Theta_2} d\Theta_1}+ {dP(\Theta_1,\Theta_2)}, \Theta_3,\Theta_4)\\ &&+\iota_{\overleftarrow{\Phi}([\Theta_1,\Theta_2]_P,\Theta_3)} d\Theta_4+\iota_{\overleftarrow{\Phi}(\Theta_3,\Theta_4)} (L_{P^\sharp \Theta_1} d\Theta_2-L_{P^\sharp \Theta_2} d\Theta_1)+\iota_{\overleftarrow{\Phi}(\Theta_4,[\Theta_1,\Theta_2]_P)} d\Theta_3+c.p.(6)\big)\\ &=&
d[P,\overleftarrow{\Phi}](\Theta_1,\Theta_2,\Theta_3,\Theta_4)+\big(\iota_{[P,\overleftarrow{\Phi}](\Theta_1,\Theta_2,\Theta_3,~\cdot~)} d\Theta_4+c.p.(4)\big),%\\ &=&0,
\end{eqnarray*}
where we have applied Equations \eqref{important}, \eqref{important2} and the Cartan formulas \[d\circ L_X=L_X\circ d,\qquad L_X\circ \iota_Y-\iota_Y\circ L_X=\iota_{[X,Y]}.\]
So if $[P,\overleftarrow{\Phi}]=0$ then \eqref{23com} holds and we complete the proof.\end{itemize}
\end{proof}

 \begin{remark} 
We remark that the   Lie $2$-algebra claimed by Theorem \ref{Thm:case1} can not be drawn directly from the construction as shown in  Example \ref{Example:NP2}.  
\end{remark}
 
\begin{proposition}\label{Lie2homo} Regarding the weak Lie $2$-algebra given by Theorem \ref{Thm:case1} and the one by Proposition \ref{exBCLX}, there is a weak Lie $2$-algebra morphism $(P^\sharp,p^\sharp,\nu)$ between them:  
\begin{equation*}
 		\xymatrix{
 			\Omega^1(M)\ar@{^{}->}[d]_{J} \ar[r]^{p^\sharp} & \Gamma(A)\ar@{^{}->}[d]^{T} \\
 			\Omega^1_{\mult}(\Gpd)\ar[r]^{P^\sharp}\ar@{.>}[ru]^{\nu} & \mathfrak{X}^1_{\mult}(\Gpd)},
 	\end{equation*}
 	where  $p=\mathrm{pr}_{ TM\otimes A } (P|_M)\in \Gamma(TM\otimes A)$ and $\nu: \wedge^2\Omega^1_{\mult}(\Gpd)  \to \Gamma(A)$ is defined by
	\[\nu(\Theta_1,\Theta_2)=-\Phi(\theta_1,\theta_2,~\cdot~),
	\qquad \mbox{where }~ \theta_i=\mathrm{pr}_{  A^*} (\Theta_i|_M)\in \Gamma(A^*).\]
	\end{proposition}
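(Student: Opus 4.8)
The plan is to verify directly the four requirements in the definition of a morphism of graded weak Lie $2$-algebras for the candidate data $(F_1,F_2)=\bigl((P^\sharp,p^\sharp),\nu\bigr)$: the chain-map identity $P^\sharp\circ J=T\circ p^\sharp$, together with conditions (1), (2) and (3). The guiding principle is that each of these, after comparison of left/right-invariant multivector fields (equivalently after pulling back along the injective $s^*$), collapses onto an identity already produced in the proof of Theorem~\ref{Thm:case1}: the key formula \eqref{Eqt:PsharpwrtPbracket} relating $P^\sharp$ to $[\,\cdot\,,\,\cdot\,]_P$, the contraction identities \eqref{eqeq1}--\eqref{eqeq2}, the definition \eqref{3br} of the homotopy map, the projection facts \eqref{strho}, and, most importantly, the four-fold identity \eqref{important2}. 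A simplifying feature is that the target in Proposition~\ref{exBCLX} is a \emph{strict} Lie $2$-algebra, so $[\,\cdot\,,\,\cdot\,,\,\cdot\,]'_3=0$ and condition (3) loses its first term.

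For the chain map and condition (1) I would first record a contraction lemma, $P^\sharp(s^*\gamma)=\overleftarrow{p^\sharp\gamma}$ and $P^\sharp(t^*\gamma)=\overrightarrow{p^\sharp\gamma}$ for $\gamma\in\Omega^1(M)$; this is exactly where the definition $p=\mathrm{pr}_{TM\otimes A}(P|_M)$ enters, and it follows from the groupoid-morphism property of $P^\sharp$ together with the splitting $T\Gpd|_M=TM\oplus A$ and \eqref{strho} (it can also be cited from \cite{CLL}). Subtracting the two gives $P^\sharp(J\gamma)=\overleftarrow{p^\sharp\gamma}-\overrightarrow{p^\sharp\gamma}=T(p^\sharp\gamma)$, i.e.\ the chain map. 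Condition (1), namely $P^\sharp[\Theta_1,\Theta_2]_P-[P^\sharp\Theta_1,P^\sharp\Theta_2]=T\nu(\Theta_1,\Theta_2)$, is then immediate: by \eqref{Eqt:PsharpwrtPbracket} the left side equals $\tfrac{1}{2}[P,P](\Theta_1,\Theta_2,\cdot)=(\overrightarrow{\Phi}-\overleftarrow{\Phi})(\Theta_1,\Theta_2,\cdot)$, and \eqref{eqeq2} rewrites this as $\overrightarrow{\Phi(\theta_1,\theta_2)}-\overleftarrow{\Phi(\theta_1,\theta_2)}=T\bigl(-\Phi(\theta_1,\theta_2)\bigr)=T\nu(\Theta_1,\Theta_2)$, using $\tfrac{1}{2}[P,P]=\overrightarrow{\Phi}-\overleftarrow{\Phi}$ and the definition of $\nu$.

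Conditions (2) and (3) are the substantive ones. Unwinding the target action via $\overleftarrow{X\triangleright u}=[X,\overleftarrow{u}]$, condition (2) becomes $p^\sharp(\Theta\triangleright\gamma)-[P^\sharp\Theta,p^\sharp\gamma]_2'=\nu(\Theta,J\gamma)$; I would apply the contraction lemma and \eqref{brform} to turn the left side into $P^\sharp[\Theta,s^*\gamma]_P-[P^\sharp\Theta,P^\sharp(s^*\gamma)]$, invoke \eqref{Eqt:PsharpwrtPbracket} to replace it by $(\overrightarrow{\Phi}-\overleftarrow{\Phi})(\Theta,s^*\gamma,\cdot)$, and use \eqref{strho} (right-invariant fields are $s$-vertical, while left-invariant ones $s$-project to $-\rho$) with \eqref{eqeq2} to identify the result with $\overleftarrow{\Phi(\theta,\rho^*\gamma)}=\overleftarrow{\nu(\Theta,J\gamma)}$. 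For condition (3), with $[\,\cdot\,,\,\cdot\,,\,\cdot\,]'_3=0$, the required identity is $p^\sharp[\Theta_1,\Theta_2,\Theta_3]_3=[P^\sharp\Theta_1,\nu(\Theta_2,\Theta_3)]_2'-\nu([\Theta_1,\Theta_2]_P,\Theta_3)+c.p.$ Passing to left-invariant fields, the left side is $\overleftarrow{p^\sharp[\Theta_1,\Theta_2,\Theta_3]_3}=P^\sharp\bigl(s^*[\Theta_1,\Theta_2,\Theta_3]_3\bigr)$, which by \eqref{3br} is precisely the sum of the two $P^\sharp(\cdots)$-terms on the right of \eqref{important2}; the remaining terms there, $[P^\sharp\Theta_3,\overleftarrow{\Phi}(\Theta_1,\Theta_2)]-\overleftarrow{\Phi}([\Theta_1,\Theta_2]_P,\Theta_3)+c.p.$, are exactly $\overleftarrow{\,\cdot\,}$ of the right side of (3) once $\nu=-\Phi(\theta_1,\theta_2,\cdot)$ is substituted. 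Since the quasi-Poisson axiom $[P,\overrightarrow{\Phi}]=0$ forces $[P,\overleftarrow{\Phi}]=0$ (because $[P,[P,P]]=0$ by the graded Jacobi identity), Equation \eqref{important2} collapses to exactly condition (3).

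The main obstacle is the cyclic-permutation bookkeeping in (3): one must check that the cyclic sum $\sum[P^\sharp\Theta_3,\overleftarrow{\Phi}(\Theta_1,\Theta_2)]$ coming from \eqref{important2} coincides with the cyclic sum $\sum[P^\sharp\Theta_1,\overleftarrow{\Phi}(\Theta_2,\Theta_3)]$ demanded by (3) — they agree as unordered cyclic sums — and likewise that the $\overleftarrow{\Phi}([\,\cdot\,,\,\cdot\,]_P,\cdot)$-terms match with the correct signs after the substitution $\nu=-\Phi(\theta_1,\theta_2,\cdot)$ and the identification of the target action. A secondary technical point is the contraction lemma $P^\sharp(s^*\gamma)=\overleftarrow{p^\sharp\gamma}$, whose proof requires the precise behaviour of $P|_M$ under $T\Gpd|_M=TM\oplus A$ and the sign conventions in \eqref{strho}; once these two points are settled, everything else is a direct substitution into identities already in hand.
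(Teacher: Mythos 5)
Your proposal is correct and follows essentially the same route as the paper's own proof: the chain map and condition (1) via \eqref{Eqt:PsharpwrtPbracket} together with \eqref{eqeq1}--\eqref{eqeq2} and \eqref{strho}, condition (2) by reducing to $P^\sharp[\Theta,s^*\gamma]_P-[P^\sharp\Theta,P^\sharp s^*\gamma]=\overleftarrow{\nu(\Theta,J\gamma)}$, and condition (3) by applying left translation and collapsing everything onto $[P,\overleftarrow{\Phi}](\Theta_1,\Theta_2,\Theta_3,\cdot)=0$ via \eqref{important2}. Your explicit derivation of $[P,\overleftarrow{\Phi}]=0$ from the quasi-Poisson axioms and the cyclic-sum bookkeeping are details the paper leaves implicit, but they do not constitute a different argument.
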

	\begin{proof}   The fact that $T\circ p^\sharp=P^\sharp\circ J$ has been shown in \cite{CLL}*{Proposition 4.8}. We check all the other conditions.  First, by Equations \eqref{Eqt:PsharpwrtPbracket}, \eqref{eqeq1}, \eqref{eqeq2} and \eqref{strho}, we  obtain: 
\[P^\sharp[\Theta_1,\Theta_2]_P-[P^\sharp \Theta_1,P^\sharp \Theta_2]=\overrightarrow{\Phi(\theta_1,\theta_2)}-\overleftarrow{\Phi(\theta_1,\theta_2)}=T\nu (\theta_1,\theta_2),\]
and, for $\Theta\in \Omega^1_{\mult}(\Gpd)$ and $\gamma\in \Omega^1(M)$, 
\[P^\sharp[\Theta,s^*\gamma]_P-[P^\sharp \Theta,P^\sharp s^*\gamma]=(\overrightarrow{\Phi}-\overleftarrow{\Phi})(\Theta,s^*\gamma)=\overleftarrow{\Phi(\theta,\rho^*\gamma)}=\overleftarrow{\nu(\Theta, s^*\gamma-t^*\gamma)}.\]
Second, by the definition of $\Theta\triangleright \gamma$, the relations $P^\sharp s^*(\mu)=\overleftarrow{p^\sharp(\mu)}$ and $[P^\sharp \Theta, \overleftarrow{p^\sharp \gamma}]=\overleftarrow{(P^\sharp \Theta)\triangleright (p^\sharp \gamma)}$ for any $\mu, \gamma\in \Omega^1(M)$, we further have
\[\overleftarrow{p^\sharp (\Theta\triangleright \gamma)}-\overleftarrow{(P^\sharp\Theta) \triangleright  (p^\sharp \gamma)}=\overleftarrow{\nu(\Theta,J\gamma)},\]
which implies that \[p^\sharp (\Theta\triangleright \gamma)-(P^\sharp\Theta) \triangleright  (p^\sharp \gamma)=\nu(\Theta,J\gamma).\]
Finally, we check the third condition
\begin{eqnarray}\label{homo3}
-P^\sharp(\Theta_3) \triangleright \nu (\Theta_1,\Theta_2)+\nu ([\Theta_1,\Theta_2]_P,\Theta_3)+c.p.+p^\sharp([\Theta_1,\Theta_2,\Theta_3]_3)=0.
\end{eqnarray}
In fact, applying the left translation $\overleftarrow{~\cdot~}$ to the left hand side  of \eqref{homo3}, we get
\begin{eqnarray*}
&&\big([P^\sharp(\Theta_3),\overleftarrow{\Phi}(\Theta_1,\Theta_2)]-\overleftarrow{\Phi}([\Theta_1,\Theta_2]_P,\Theta_3)+c.p.\big)\\ &&
+P^\sharp(d\overleftarrow{\Phi}(\Theta_1,\Theta_2,\Theta_3))+P^\sharp\big(\iota_{\overleftarrow{\Phi}(\Theta_1,\Theta_2)} d\Theta_3+c.p.\big)=[P,\overleftarrow{\Phi}](\Theta_1,\Theta_2,\Theta_3,~\cdot~)=0,
\end{eqnarray*}
where we have used \eqref{important2}.
Hence we proved \eqref{homo3} and finished the verification of $(P^\sharp,p^\sharp,\nu)$ being a morphism of the two weak Lie $2$-algebras in question.
\end{proof}

\subsection{The cubic $L_\infty$-algebra of multiplicative forms}
In this part, we  investigate higher degree multiplicative tensors on the Lie groupoid $\Gpd$ whose tangent Lie algebroid is $A$ (all over the base manifold $M$). Let us first make convention of contractions:    For any tensor field $R\in \mathcal{T}^{k,l}(N):=\Gamma(\wedge^k TN \otimes \wedge^l T^*N)$   and $\Theta\in \mathcal{T}^{0,p}(N)=\Omega^p(N)$ on general manifold $N$, define $\iota_R  \Theta \in \mathcal{T}^{k-1,l+p-1}(N)$ as follows:
 \begin{eqnarray}\label{contraction}
\iota_R  \Theta  &=&\sum_i {(-1)^{k-i}} X_1\wedge ~\cdots~ \widehat{X_i}~\cdots~\wedge X_k \otimes (\beta\wedge \iota_{X_i}\Theta),  
\end{eqnarray}
where we have assumed $R=X_1\wedge~\cdots~ \wedge X_k\otimes \beta$.

We also adopt an operator first introduced in \cite{BD}:
\begin{eqnarray}\label{Eqn:leftliftingoperator}
\mathcal{S}:&&~\Gamma(\wedge^k A\otimes \wedge^l T^*M) \to \Gamma(\wedge^k T\Gpd\otimes \wedge^l T^*\Gpd)\\\nonumber&&~\qquad\quad\quad\quad~
u\otimes \omega \quad \mapsto\quad  \overleftarrow{u}\otimes s^*\omega.	 
\end{eqnarray}
Roughly speaking, the operator $\mathcal{S}$ lifts   $u\otimes \omega$ to a left-invariant tensor field on $\Gpd$. 

\begin{lemma}\label{preparation}~ 
\begin{itemize}
\item[\rm (i)] For all $R\in \mathcal{T}^{k,l}_{\mult}(\Gpd)$  and $\Theta\in \Omega^p_{\mult}(\Gpd)$, we have $\iota_R  \Theta \in \mathcal{T}^{k-1,l+p-1}_{\mult}(\Gpd)$;
\item[\rm (ii)] For any $u\in \Gamma(\wedge^k A), \gamma\in \Omega^l(M)$ and $\Theta\in \Omega^p_{\mult}(\Gpd)$, we have 
\begin{equation*}
 \iota_{\mathcal{S}(u \otimes  \gamma)} \Theta=\mathcal{S}(\iota_{u\otimes \gamma} \theta).
\end{equation*}
Here $\theta:=$ $\mathrm{pr}_{ A^*\otimes (\wedge^{p-1}T^*M)}$ $(\Theta|_M)$ is {the leading term\footnote{From $\Theta\in \Omega^p_{\mult}(\Gpd)$ we define $\theta:=\mathrm{pr}_{A^*\otimes (\wedge^{p-1}T^*M)} \Theta|_M\in \Gamma(A^*\otimes (\wedge^{p-1} T^*M))$, and call it \textbf{the leading term} of $\Theta$, which completely determines the restriction of  $\Theta$ on $M$; see \cite{CLL} for details.}} of $\Theta$ and 
$\iota_{u\otimes \gamma} \theta\in \Gamma(\wedge^{k-1} A\otimes \wedge^{l+p-1} T^*M)$ is defined in the same fashion as in \eqref{contraction}.  For   the   operator   $\mathcal{S}$, see \eqref{Eqn:leftliftingoperator}.  \end{itemize}
\end{lemma}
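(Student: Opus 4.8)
The plan is to handle the two parts by different methods: part (i) through the characterization of multiplicative tensors as fiberwise-additive functions on the prolonged groupoid, and part (ii) by a direct reduction to left-invariant data.

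For part (i), I would start from the definition: $R\in\mathcal{T}^{k,l}(\Gpd)$ is multiplicative iff the associated function on $\mathbb{G}^{(k,l)}$ --- the fiber product of $k$ copies of the cotangent groupoid $T^*\Gpd\rightrightarrows A^*$ and $l$ copies of the tangent groupoid $T\Gpd\rightrightarrows TM$ --- is additive under composition, and likewise $\Theta\in\Omega^p_{\mult}(\Gpd)$ is multiplicative iff it is additive as a function on $\oplus^p T\Gpd$. The structural input I would invoke is the duality between these two VB-groupoids: for composable covectors $\xi,\xi'\in T^*\Gpd$ and composable vectors $V,W\in T\Gpd$ one has $\langle\xi\cdot\xi',V\cdot W\rangle=\langle\xi,V\rangle+\langle\xi',W\rangle$ (with $V\cdot W=Tm(V,W)$), which is exactly the relation defining the cotangent-groupoid multiplication. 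Writing $R=X_1\wedge\cdots\wedge X_k\otimes\beta$ and expanding the contraction formula \eqref{contraction}, the pairing of $\iota_R\Theta$ against composable cotangent/tangent data becomes a sum over the contracted index $i$ of products of an evaluation of $R$ and an evaluation of $\Theta$. I would then verify fiberwise additivity of this expression using the additivity of $R$ and of $\Theta$ together with the duality above, the latter being what guarantees that the single vector $X_i$ shared between the $R$-slot and the $\Theta$-slot composes consistently.

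For part (ii), I would use $C^\infty(M)$-linearity to reduce to $u=u_1\wedge\cdots\wedge u_k$ with $u_i\in\Gamma(A)$, so that $\mathcal{S}(u\otimes\gamma)=\overleftarrow{u_1}\wedge\cdots\wedge\overleftarrow{u_k}\otimes s^*\gamma$. Expanding by \eqref{contraction} gives $\iota_{\mathcal{S}(u\otimes\gamma)}\Theta=\sum_i(-1)^{k-i}\,\overleftarrow{u_1}\wedge\cdots\wedge\widehat{\overleftarrow{u_i}}\wedge\cdots\wedge\overleftarrow{u_k}\otimes\big(s^*\gamma\wedge\iota_{\overleftarrow{u_i}}\Theta\big)$. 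The crucial ingredient is the identity $\iota_{\overleftarrow{u_i}}\Theta=s^*(\iota_{u_i}\theta)$, which was already used in the proof of Theorem \ref{Thm:case1} and established in \cite{CLL}: contracting a multiplicative form with a left-invariant vector field returns the $s$-pullback of the corresponding contraction of the leading term $\theta$. Combining this with $s^*\gamma\wedge s^*(\iota_{u_i}\theta)=s^*(\gamma\wedge\iota_{u_i}\theta)$ and with the left-invariance identity $\overleftarrow{u_1}\wedge\cdots\wedge\widehat{\overleftarrow{u_i}}\wedge\cdots\wedge\overleftarrow{u_k}=\overleftarrow{u_1\wedge\cdots\wedge\widehat{u_i}\wedge\cdots\wedge u_k}$, the whole expression collapses to $\mathcal{S}\big(\sum_i(-1)^{k-i}\,u_1\wedge\cdots\wedge\widehat{u_i}\wedge\cdots\wedge u_k\otimes(\gamma\wedge\iota_{u_i}\theta)\big)$, whose argument is precisely $\iota_{u\otimes\gamma}\theta$ by the definition of the base contraction in \eqref{contraction}.

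The main obstacle I anticipate is in part (i): keeping the combinatorics of the sum over the contracted index together with the Koszul signs under control, and in particular arguing cleanly that the vector slot shared between the $R$-evaluation and the $\Theta$-evaluation composes consistently under the tangent and cotangent groupoid multiplications. Part (ii) is essentially bookkeeping once the pullback identity $\iota_{\overleftarrow{u_i}}\Theta=s^*(\iota_{u_i}\theta)$ is available. As a sanity check I would first treat the case of a multiplicative vector field $X$ (so $k=1$, $l=0$), where (i) reduces to the classical fact that $\iota_X\Theta$ is multiplicative --- here the shared-slot composition is simply $X_{gh}=X_g\cdot X_h$ --- and the case $k=1$ of (ii), where the claim becomes the single identity $\iota_{\overleftarrow{u}}\Theta=s^*(\iota_u\theta)$.
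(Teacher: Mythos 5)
Your part (ii) is correct and takes a different route from the paper: you collapse the contraction directly using the identity $\iota_{\overleftarrow{u_i}}\Theta=s^*(\iota_{u_i}\theta)$ imported from \cite{CLL}, whereas the paper argues intrinsically, checking that $\iota_{\mathcal{S}(u\otimes\gamma)}\Theta$ vanishes whenever an argument lies in $\ker s_{T^*\Gpd}$ or $\ker s_*$, which forces it to be of the form $\mathcal{S}(\,\cdot\,)$, the leading term then being identified by restriction to $M$. Your reduction is legitimate (the imported identity is essentially the $k=1$, $l=0$ case of the statement itself), just less self-contained than the paper's argument.

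Part (i), however, has a genuine gap, located precisely at the point you flag as your main obstacle. Expanding $R=X_1\wedge\cdots\wedge X_k\otimes\beta$ and evaluating $\iota_R\Theta$ via \eqref{contraction} produces terms that are products of evaluations of the \emph{factors} of $R$ (the $(k-1)$-vector $X_1\wedge\cdots\widehat{X_i}\cdots\wedge X_k$ against the $\alpha$'s, and $\beta$ against some of the $Y$'s) with evaluations of $\Theta$. The full tensor $R$ is never evaluated in this expansion, so the hypothesis that $R$ is an additive function on $\mathbb{G}^{(k,l)}$ can never be invoked: multiplicativity is a property of total evaluations of $R$ and is not inherited by the factors of a (non-canonical) decomposition. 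Your sanity check $k=1$, $l=0$ hides this, because there $R=X$ is itself the unique factor and its multiplicativity literally is a composition law $X_{gh}=X_g\cdot X_h$; for $k\geqslant 2$ or $l\geqslant 1$ the phrase ``the shared vector $X_i$ composes consistently'' has no meaning, and in any case a product of additive functions is not additive, so termwise additivity would fail even if the factors were individually multiplicative.

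The missing idea --- and what the paper does --- is to keep $R$ undecomposed and let $\Theta$, not $R$, play the role of the groupoid morphism. Multiplicativity of $\Theta$ is equivalent to $\Theta^\sharp:\oplus^{p-1}T\Gpd\to T^*\Gpd$ being a morphism of Lie groupoids (your duality identity $\langle\xi\cdot\xi',V\cdot W\rangle=\langle\xi,V\rangle+\langle\xi',W\rangle$ is exactly the ingredient that proves this), and one writes
\[\iota_R\Theta(\alpha_1,\dots,\alpha_{k-1},Y_1,\dots,Y_{l+p-1})=\pm\sum_\sigma(-1)^\sigma R\big(\Theta^\sharp(Y_{\sigma_1},\dots,Y_{\sigma_{p-1}}),\alpha_1,\dots,\alpha_{k-1},Y_{\sigma_p},\dots,Y_{\sigma_{l+p-1}}\big),\]
a sum over shuffles of \emph{full} evaluations of $R$, with $\Theta^\sharp(\cdots)$ occupying one covector slot. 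Then $\Theta^\sharp(Y\cdot Y')=\Theta^\sharp(Y)\cdot\Theta^\sharp(Y')$ together with a single application of the additivity of $R$ on composable tuples yields the multiplicativity of $\iota_R\Theta$ in one step. The roles are thus opposite to the ones in your plan: $\Theta$ supplies the composable elements fed into a slot of $R$, and $R$ supplies the additive function that splits.
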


\begin{proof}   $\mathrm{(i)}$  Since $R\in \mathcal{T}^{k,l}_{\mult}(\Gpd)$  and $\Theta\in \Omega^p_{\mult}(\Gpd)$ are multiplicative, we know that the maps
\[\Theta^\sharp: \oplus^{p-1} T\Gpd\to T^*\Gpd,~\mbox{ and }\quad R:\oplus^k T^* \Gpd \oplus \oplus^{l} T\Gpd \to \mathbb{R}\] are groupoid morphisms.
For $(g,h)\in \Gpd^{(2)}$, $Y_i\in T_g\Gpd, Y'_i\in T_h \Gpd, \alpha_j\in T^*_g\Gpd$ and $\alpha'_j\in T^*_h\Gpd$ such that  $(Y_i,Y'_i)\in (T \Gpd)^{(2)},(\alpha_j,\alpha'_j)\in (T^*\Gpd)^{(2)}$ are composable, we have
\begin{eqnarray*}
&&\iota_R  \Theta(\alpha_1~\cdot~ \alpha'_1,~\cdots~,\alpha_{k-1}~\cdot~ \alpha'_{k-1},Y_1~\cdot~ Y'_1,~\cdots~, Y_{l+p-1}~\cdot~ Y'_{l+p-1})
\\ &=&\pm \sum_{\sigma}(-1)^\sigma R(\Theta^\sharp(Y_{\sigma_1}~\cdot~ Y'_{\sigma_1},~\cdots~, Y_{\sigma_{p-1}}~\cdot~ Y'_{\sigma_{p-1}}),\\
&&\qquad\qquad\qquad\qquad\qquad\quad \alpha_1~\cdot~ \alpha'_1,~\cdots~,\alpha_{k-1}~\cdot~ \alpha'_{k-1},Y_{\sigma_{p}}~\cdot~ Y'_{\sigma_p},~\cdots~, Y_{\sigma_{l+p-1}}~\cdot~ Y'_{\sigma_{l+p-1}})\\ &=&\pm
\sum_{\sigma}(-1)^\sigma R(\Theta^\sharp(Y_{\sigma_1},~\cdots~, Y_{\sigma_{p-1}})~\cdot~ \Theta^\sharp(Y'_{\sigma_1},~\cdots~,  Y'_{\sigma_{p-1}}), ~\cdots~,Y_{\sigma_{p}}~\cdot~ Y'_{\sigma_p},~\cdots~, Y_{\sigma_{l+p-1}}~\cdot~ Y'_{\sigma_{l+p-1}})\\ &=&\pm \sum_{\sigma}(-1)^\sigma \big(R(\Theta^\sharp(Y_{\sigma_1},~\cdots~, Y_{\sigma_{p-1}}), \alpha_1,~\cdots~,\alpha_{k-1},Y_{\sigma_{p}},~\cdots~, Y_{\sigma_{l+p-1}})\\ &&\qquad\quad+R(\Theta^\sharp(Y'_{\sigma_1},~\cdots~, Y'_{\sigma_{p-1}}), \alpha'_1,~\cdots~,\alpha'_{k-1},Y'_{\sigma_{p}},~\cdots~, Y'_{\sigma_{l+p-1}})\big)\\ &=&
\iota_R  \Theta(\alpha_1,~\cdots~,\alpha_{k-1},Y_1,~\cdots~, Y_{l+p-1})+
\iota_R  \Theta(\alpha'_1,~\cdots~,\alpha'_{k-1},Y'_1,~\cdots~ Y'_{l+p-1}).\end{eqnarray*}
This fact confirms that $\iota_R  \Theta$ is a multiplicative $(k-1,l+p-1)$-tensor field.

$\mathrm{(ii)}$ It suffices to check that 
\[(\iota_{\overleftarrow{u}\otimes s^*\gamma} \Theta) (\alpha_1,~\cdots~,\alpha_{k-1},Y_1,~\cdots~, Y_{l+p-1})=0,   \]
holds for $Y_1\in \ker s_{T\Gpd}=\ker s_*$ or $\alpha_1\in \ker s_{T^*\Gpd}$, and $Y_i\in \mathfrak{X}^1(\Gpd)$, $\alpha_j\in \Omega^1(\Gpd)$, $i,j\geqslant 2$. In fact, as $\alpha_1\in \ker s_{T^*\Gpd}$, we have 
\[\langle \overleftarrow{w},\alpha_1\rangle=\langle w,s_{T^*\Gpd}\alpha_1\rangle=0,\qquad \forall w\in \Gamma(A),\]
and thus
\begin{eqnarray*}
&&(\iota_{\overleftarrow{u}\otimes s^*\gamma} \Theta) (\alpha_1,~\cdots~,\alpha_{k-1},Y_1,~\cdots~, Y_{l+p-1})\\ &=&
\pm \sum_{\sigma} (-1)^\sigma \overleftarrow{u}(\Theta^\sharp(Y_{\sigma_1},~\cdots~,Y_{\sigma_{p-1}}),\alpha_1,~\cdots~,\alpha_{k-1})(s^*\gamma)(Y_{\sigma_{p}},~\cdots~,Y_{\sigma_{l+p-1}})=0.
\end{eqnarray*}
Meanwhile, for $Y_1\in \ker s_*$, one has
\begin{eqnarray*}
&&(\iota_{\overleftarrow{u}\otimes s^*\gamma} \Theta) (\alpha_1,~\cdots~,\alpha_{k-1},Y_1,~\cdots~, Y_{l+p-1})\\ &=&\pm 
\sum_{\tau} (-1)^\tau\overleftarrow{u}(\Theta^\sharp(Y_1,Y_{\tau_1},~\cdots~,Y_{\tau_{p-2}}),\alpha_1,~\cdots~,\alpha_{k-1})(s^*\gamma)(Y_{\tau_{p-1}},~\cdots~,Y_{\tau_{l+p-2}})\\ &=&\pm\sum_{\tau} (-1)^\tau u(s_{T^*\Gpd}\Theta^\sharp(Y_1,Y_{\tau_1},~\cdots~,Y_{\tau_{p-2}}),s_{T^*\Gpd}\alpha_1,~\cdots~,s_{T^*\Gpd}\alpha_{k-1})(s^*\gamma)(Y_{\tau_{p-1}},~\cdots~,Y_{\tau_{l+p-2}})\\ &=& \pm 
\sum_{\tau} (-1)^\tau u(\Theta^\sharp(s_*Y_1,s_*Y_{\tau_1},~\cdots~,s_*Y_{\tau_{p-2}}),s_{T^*\Gpd}\alpha_1,~\cdots~,s_{T^*\Gpd}\alpha_{k-1})(s^*\gamma)(Y_{\tau_{p-1}},~\cdots~,Y_{\tau_{l+p-2}})\\ &=&0,
\end{eqnarray*}
where in the second last equation we have used the identity $s_{T^*\Gpd} \circ \Theta^\sharp=\Theta^\sharp\circ s_*$ since $\Theta$ is multiplicative.  
\end{proof}

Applying Examples \ref{Example:NP} and \ref{Example:NP2} to the case of  a Lie groupoid $\Gpd$ with a  bivector field $P\in \mathfrak{X}^2(\Gpd)$, we obtain a cubic $L_\infty$-algebra on forms $\Omega^\bullet(\Gpd)$ and a curved DGLA on multivector fields $\mathfrak{X}^\bullet(\Gpd)$ of $\Gpd$. %, and a morphism between them. 
  Concerning the groupoid structure, it is certainly interesting to consider the case that   $P$ is a multiplicative bivector  field on $\Gpd$. Then we shall obtain a  sub
cubic $L_\infty$-algebra   and a sub curved DGLA. 
\begin{proposition}\label{Lin}
Let $\Gpd$ be a Lie groupoid, and $P$ a multiplicative bivector field on $\Gpd$. The following statements are true:
\begin{itemize}
\item[\rm (i)] The quadruple $(\Omega^\bullet_{\mult}(\Gpd)[1],d, [~\cdot~,~\cdot~]_P, [~\cdot~,~\cdot~,~\cdot~]_3)$ is a cubic $L_\infty$-algebra, where $d$ is the de Rham differential and $[~\cdot~,~\cdot~,~\cdot~]_3: \Omega^p_{\mult}(\Gpd)\wedge \Omega^q_{\mult}(\Gpd)\wedge \Omega^s_{\mult}(\Gpd)\to \Omega^{p+q+s-3}_{\mult}(\Gpd)$ is defined by \[[\Theta_1,\Theta_2,\Theta_3]_3=\iota_{(\iota_{(\iota_{\frac{1}{2}[P,P]} \Theta_1)} \Theta_2)} \Theta_3,\qquad \Theta_i\in \Omega^\bullet_{\mult}(\Gpd).\]
(For convention of the contraction $\iota$, see Equation \eqref{contraction}.)
\item [\rm (ii)] The quadruple  $(\mathfrak{X}^\bullet_{\mult}(\Gpd)[1],c,d_P=[P,~\cdot~],[~\cdot~,~\cdot~])$ is a   curved DGLA,  where  $c=\frac{1}{2}[P,P] \in \mathfrak{X}^3_{\mult}(\Gpd)$.
%\item[\rm  (iii)] ???!!! We have a morphism $(F_0,F_1,F_2): \Omega^\bullet_{\mult}(\Gpd)[1]\to \mathfrak{X}^\bullet_{\mult}(\Gpd)[1]$ of curved $L_\infty$-algebras, where $F_0=-P\in \mathfrak{X}^2_{\mult}(\Gpd)$, $F_1:=\wedge^\bullet P^\sharp: \Omega^\bullet_{\mult}(\Gpd)\to \mathfrak{X}^\bullet_{\mult}(\Gpd)$
%and \[F_2:\Omega^p_{\mult}(\Gpd)\wedge \Omega^q_{\mult}(\Gpd)\to \mathfrak{X}^{p+q-2}_{\mult}(\Gpd),\qquad (\Theta_1,\Theta_2)\mapsto \iota_{P}(\Theta_1\wedge \Theta_2),\]
%and all the higher maps $F_k$ vanish for $k\geqslant 3$.  
\end{itemize}
 \end{proposition}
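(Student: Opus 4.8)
The plan is to exhibit both quadruples as \emph{sub}-structures of the ones already furnished by Examples \ref{Example:NP} and \ref{Example:NP2} applied to the manifold $N=\Gpd$ (forgetting the groupoid structure entirely). For the single bivector field $P\in\mathfrak{X}^2(\Gpd)$, Example \ref{Example:NP2} gives a cubic $L_\infty$-algebra $(\Omega^\bullet(\Gpd)[1],d,[~\cdot~,~\cdot~]_P,[~\cdot~,~\cdot~,~\cdot~]_3)$ and Example \ref{Example:NP} gives a curved DGLA $(\mathfrak{X}^\bullet(\Gpd)[1],c,d_P,[~\cdot~,~\cdot~])$ with $c=\frac12[P,P]$. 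Hence all higher Jacobi identities hold automatically on the full spaces, and the only thing left to prove is that the multiplicative subspaces $\Omega^\bullet_{\mult}(\Gpd)$ and $\mathfrak{X}^\bullet_{\mult}(\Gpd)$ are preserved by every structure map; once this is shown, they inherit the respective structures as sub-objects. (Along the way one checks that the iterated-contraction formula displayed for $[~\cdot~,~\cdot~,~\cdot~]_3$ in (i) agrees with the Leibniz extension of the $3$-bracket of Example \ref{Example:NP2}.)

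I would dispatch (ii) first, as it is immediate. By Proposition \ref{exBCLX} the Schouten bracket of two multiplicative multivector fields is again multiplicative, so $[~\cdot~,~\cdot~]$ restricts to $\mathfrak{X}^\bullet_{\mult}(\Gpd)$; in particular $P$ being multiplicative forces $c=\frac12[P,P]\in\mathfrak{X}^3_{\mult}(\Gpd)$ and makes $d_P=[P,~\cdot~]$ map $\mathfrak{X}^\bullet_{\mult}(\Gpd)$ into itself. Thus all three operations $c$, $d_P$, $[~\cdot~,~\cdot~]$ preserve multiplicativity and $(\mathfrak{X}^\bullet_{\mult}(\Gpd)[1],c,d_P,[~\cdot~,~\cdot~])$ is a sub curved DGLA.

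For (i), closure under the de Rham differential is immediate, since $m^*d\Theta=d\,m^*\Theta=d(\mathrm{pr}_1^*\Theta+\mathrm{pr}_2^*\Theta)=\mathrm{pr}_1^*d\Theta+\mathrm{pr}_2^*d\Theta$. Closure under the homotopy map is exactly where Lemma \ref{preparation} does the work: because $\frac12[P,P]\in\mathcal{T}^{3,0}_{\mult}(\Gpd)$ is multiplicative (again by Proposition \ref{exBCLX}), applying Lemma \ref{preparation}(i) three times along the iterated contraction $\iota_{(\iota_{(\iota_{\frac12[P,P]}\Theta_1)}\Theta_2)}\Theta_3$ shows that $[\Theta_1,\Theta_2,\Theta_3]_3\in\Omega^{p+q+s-3}_{\mult}(\Gpd)$ for multiplicative $\Theta_i$. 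On multiplicative $1$-forms the $P$-bracket is also easy: Cartan's formula rewrites it as $[\alpha,\beta]_P=\iota_{P^\sharp\alpha}d\beta-\iota_{P^\sharp\beta}d\alpha+d(P(\alpha,\beta))$, and here $P^\sharp\alpha$ is a multiplicative vector field, $\iota_{P^\sharp\alpha}d\beta$ is a multiplicative $1$-form (both by Lemma \ref{preparation}(i), using that $d\beta$ is a multiplicative $2$-form), and $P(\alpha,\beta)$ is a multiplicative function; so $[\alpha,\beta]_P$ is multiplicative.

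The genuine obstacle, and the step I expect to be hardest, is to upgrade this to multiplicative forms of \emph{arbitrary} degree: a multiplicative $p$-form is not a wedge product of multiplicative $1$-forms, so the Leibniz-rule definition of $[~\cdot~,~\cdot~]_P$ does not visibly respect multiplicativity. To overcome this I would give an intrinsic description of the bracket that never wedges the two arguments together, built from $d$, the degree $-1$ operator $d_P=[\iota_P,d]$, and contractions applied to the individually multiplicative data $\Theta_i$ and $d\Theta_i$; note that $d_P$ maps $\Omega^k_{\mult}(\Gpd)$ to $\Omega^{k-1}_{\mult}(\Gpd)$ since both $\iota_P$ (by Lemma \ref{preparation}(i)) and $d$ do. Equivalently, one invokes multiplicativity of $P$ at the level of composable pairs: the induced bivector on $\Gpd^{(2)}$ is $m$-related to $P$, so naturality of the Koszul bracket gives $m^*[\Theta_1,\Theta_2]_P=\mathrm{pr}_1^*[\Theta_1,\Theta_2]_P+\mathrm{pr}_2^*[\Theta_1,\Theta_2]_P$, the cross terms dropping out. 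This is precisely the closure statement established in \cite{CLL} for Poisson groupoids, and since that argument uses only that $P$ is multiplicative (and not $[P,P]=0$), it applies here verbatim. Granting closure under $[~\cdot~,~\cdot~]_P$, all cubic $L_\infty$ axioms are inherited from Example \ref{Example:NP2}, completing (i); everything else is bookkeeping with Lemma \ref{preparation}.
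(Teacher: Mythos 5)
Your proposal is correct and follows essentially the same route as the paper's proof: both realize the two quadruples as sub-objects of the structures from Examples \ref{Example:NP} and \ref{Example:NP2}, both obtain closure of the $3$-bracket by iterating Lemma \ref{preparation}(i) against the multiplicative tensor $\frac{1}{2}[P,P]$, both reduce closure under $[~\cdot~,~\cdot~]_P$ to the result of \cite{CLL}*{Theorem 4.14}, and both settle (ii) by the standard closure of multiplicative multivector fields under the Schouten bracket. Your added remark that the \cite{CLL} argument uses only multiplicativity of $P$ (and not $[P,P]=0$) is a worthwhile precision, since that theorem is stated there for Poisson groupoids while here $P$ is merely multiplicative.
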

 \begin{proof}   For (i),   we only need to show that   multiplicative forms are closed under the bracket  $[~\cdot~,~\cdot~]_P$ and the $3$-bracket $[~\cdot~,~\cdot~,~\cdot~]_3$. The former was proved in our previous work \cite{CLL}*{Theorem 4.14}. For the latter, since $[P,P]\in \mathfrak{X}^3_{\mult}(\Gpd)$ is  multiplicative, and by applying $\rm(i)$ of Lemma \ref{preparation} repeatedly, 
we see that \[[\Theta_1,\Theta_2,\Theta_3]_3\in \mathcal{T}_{\mult}^{(0,p+q+s-3)}(\Gpd)=\Omega^{p+q+s-3}_{\mult}(\Gpd).\] Thus  $\Omega^\bullet_{\mult}(\Gpd) $ is a sub cubic $L_\infty$-algebra in $ \Omega^\bullet(\Gpd)$. 

For (ii), it is well-known that multiplicative multivector fields are closed under the Schouten bracket and $P$ is multiplicative. So $\mathfrak{X}^\bullet_{\mult}(\Gpd) $ is a sub curved DGLA of $\mathfrak{X}^\bullet(\Gpd)$.
%For (iii), based on \cite[Lemma 4.17]{CLL} and Lemma \ref{preparation},  we see that $F_0,F_1,F_2$ are well-defined. Namely, they map multiplicative forms to multiplicative multivector fields. So the conclusion is clear.
 \end{proof}

Note that all structure maps in (i) are (multi-)derivations   in each argument. For this reason, we also call $(\Omega^\bullet_{\mult}(\Gpd)[1],d, [~\cdot~,~\cdot~]_P, [~\cdot~,~\cdot~,~\cdot~]_3)$ a derived Poisson algebra \cite{BCSX}.

\subsection{The graded weak Lie $2$-algebra arising from a quasi-Poisson groupoid} We are ready to state our second main result.
\begin{theorem}\label{Thm:secondmain}
Let $(\Gpd,P,\Phi)$ be a quasi-Poisson groupoid as in Definition \ref{DEF:ILX}. Then the following statements are true:
\begin{itemize}
\item[\rm (i)] The triple 
$\Omega^\bullet(M)[1]\xrightarrow{J} \Omega^\bullet_{\mult}(\Gpd)[1]$ is a graded weak Lie $2$-algebra, where $J$ is given by $\gamma\mapsto s^*\gamma-t^*\gamma$, 
the bracket on $\Omega^\bullet_{\mult}(\Gpd)$ is $[~\cdot~,~\cdot~]_P$, the action $\triangleright: \Omega^p_{\mult}(\Gpd)\times \Omega^q(M)\to \Omega^{p+q-1}(M)$  and the $3$-bracket $[~\cdot~,~\cdot~,~\cdot~]_3: \Omega^p_{\mult}(\Gpd)\wedge \Omega^q_{\mult}(\Gpd)\wedge \Omega^s_{\mult}(\Gpd)\to \Omega^{p+q+s-2}(M)$ are determined by 
\begin{eqnarray*}
s^*(\Theta\triangleright \gamma)&=&[\Theta,s^*\gamma]_P,\\ s^*[\Theta_1,\Theta_2,\Theta_3]_3&=&d\iota_{(\iota_{(\iota_{\overleftarrow{\Phi}}\Theta_1)}\Theta_2)} \Theta_3+\big(\iota_{(\iota_{(\iota_{\overleftarrow{\Phi}} \Theta_1)} \Theta_2)}d\Theta_3+c.p.),%\\ &=&d\iota_{\overleftarrow{\Phi}}(\Theta_1\wedge \Theta_2\wedge \Theta_3)+\big(\iota_{\overleftarrow{\Phi}(\Theta_1,\Theta_2,~\cdot~)} d\Theta_3+c.p.),
\end{eqnarray*}respectively, 
for $\Theta, \Theta_i\in \Omega^\bullet_{\mult}(\Gpd)$ and $\gamma\in \Omega^\bullet (M)$.
\item[\rm (ii)] The triple 
 $\Gamma(\wedge^\bullet A)[1]\xrightarrow{T} \mathfrak{X}^\bullet_{\mult}(\Gpd)[1]$ with $T(u)= \overleftarrow{u}-\overrightarrow{u}$ is a  graded Lie $2$-algebra with  the action $\triangleright: \mathfrak{X}^p_{\mult}(\Gpd)\times \Gamma(\wedge^q A)\to \Gamma(\wedge^{p+q-1} A)$   defined by 
 $\overleftarrow{X\triangleright u}=[X,\overleftarrow{u}]$.
 \item[\rm (iii)] There is a  morphism of graded weak Lie $2$-algebras
\begin{equation*}
\xymatrix{
 			\Omega^\bullet(M)[1]\ar@{^{}->}[d]_{J} \ar[r]^{\wedge^\bullet p^\sharp} & \Gamma(\wedge^\bullet A)[1] \ar@{^{}->}[d]^{T} \\
 			\Omega^\bullet _{\mult}(\Gpd)[1]\ar[r]^{\wedge^\bullet P^\sharp} \ar@{.>}[ru]^{\nu} & \mathfrak{X}^\bullet_{\mult}(\Gpd)[1]},
 	\end{equation*} formed by  $(\wedge^ \bullet P^\sharp,\wedge^\bullet p^\sharp,\nu)$, 
 	where  $p=\mathrm{pr}_{ TM\otimes A } (P|_M)\in \Gamma(TM\otimes A)$ and $\nu: \Omega^p_{\mult}(\Gpd) \wedge \Omega^q_{\mult}(\Gpd)\to \Gamma(\wedge^{p+q-1} A)$ is defined by
	\begin{eqnarray}\label{nnu}
	\nu(\Theta_1,\Theta_2)=-(\mathrm{id}\otimes (\wedge^{p+q-2} p^\sharp))(\iota_{\iota_{\Phi}\theta_1} \theta_2),
	\end{eqnarray}
	with $\theta_1=\mathrm{pr}_{ A^*\otimes (\wedge^{p-1}T^*M )} (\Theta_1|_M)\in \Gamma(A^*\otimes (\wedge^{p-1} T^*M))$ and $\theta_2$  defined similarly.  {The contraction in the right hand side of \eqref{nnu} is defined in the same manner as that of \eqref{contraction}.}
	\end{itemize}
\end{theorem}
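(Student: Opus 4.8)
The plan is to handle the three parts in order, leaning on the cubic $L_\infty$-algebra of Proposition~\ref{Lin}, the contraction calculus of Lemma~\ref{preparation}, and the degree-$1$ computation of Theorem~\ref{Thm:case1}; the two quasi-Poisson conditions $\tfrac12[P,P]=\overrightarrow{\Phi}-\overleftarrow{\Phi}$ and $[P,\overleftarrow{\Phi}]=0$ will supply the homotopy map and its higher coherences respectively. Part~(ii) needs no new argument: the triple $\Gamma(\wedge^\bullet A)[1]\xrightarrow{T}\mathfrak{X}^\bullet_{\mult}(\Gpd)[1]$ with the stated action is precisely the graded Lie $2$-algebra of Proposition~\ref{exBCLX}, so I would simply invoke it. For Part~(i) I would first dispose of well-definedness. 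Since $\overleftarrow{\Phi}=\mathcal{S}(\Phi\otimes 1)$ lies in the image of the lifting operator $\mathcal{S}$, and that image is stable under further contraction with multiplicative forms, iterating Lemma~\ref{preparation}(ii) gives $\iota_{(\iota_{(\iota_{\overleftarrow{\Phi}}\Theta_1)}\Theta_2)} \Theta_3=s^*\xi$ and, with $\overrightarrow{\Phi}$ in place of $\overleftarrow{\Phi}$, $\iota_{(\iota_{(\iota_{\overrightarrow{\Phi}}\Theta_1)}\Theta_2)} \Theta_3=t^*\xi$ for the same $\xi\in\Omega^{\bullet}(M)$; these are the graded analogues of \eqref{eqeq1}--\eqref{eqeq2}. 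Because $s^*$ commutes with $d$ and each correction $\iota_{(\iota_{(\iota_{\overleftarrow{\Phi}}\Theta_1)}\Theta_2)} d\Theta_3$ is again an $s$-pullback by the same reasoning, the defining right-hand side lies in $s^*\Omega^\bullet(M)$ and, $s^*$ being injective, determines $[\Theta_1,\Theta_2,\Theta_3]_3$ uniquely; the action $\triangleright$ is well-defined by the parallel fact that $[\Theta,s^*\gamma]_P$ is an $s$-pullback when $\Theta$ is multiplicative.

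For the identities \eqref{lie21}--\eqref{lie23} I would reproduce the proof of Theorem~\ref{Thm:case1} for forms of all degrees via the Leibniz rule, replacing the one-form formula \eqref{quasiim} by the graded Jacobi identities of the cubic $L_\infty$-algebra on $\Omega^\bullet(\Gpd)$ (Example~\ref{Example:NP2}), inside which the multiplicative forms form a subalgebra (Proposition~\ref{Lin}). Feeding $\tfrac12[P,P]=\overrightarrow{\Phi}-\overleftarrow{\Phi}$ and the $s^*/t^*$ relations above into the $n=3$ Jacobi identity, then applying $s^*-t^*=J$, produces \eqref{lie21}; running the same Jacobi identity on the arguments $\Theta_1,\Theta_2$ and $s^*\gamma$ gives the graded analogue of \eqref{foraction} and hence \eqref{lie22} once one recognizes the right-hand side as $s^*[\Theta_1,\Theta_2,J\gamma]_3$; and \eqref{lie23} is immediate from the definition of $\triangleright$ together with $s^*d=ds^*$ and $t^*d=dt^*$.

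The \textbf{main obstacle} is the higher coherence \eqref{lie25}. As in Theorem~\ref{Thm:case1} this is exactly where $[P,\overleftarrow{\Phi}]=0$ is used, and I would prove it by first establishing the graded generalization of \eqref{important}--\eqref{important2}: a closed formula expressing $[P,\overleftarrow{\Phi}]$ evaluated on multiplicative forms $\Theta_1,\dots,\Theta_4$ of arbitrary degree as a sum of Schouten-bracket terms $[P^\sharp(\cdot),\overleftarrow{\Phi}(\cdots)]$, terms $\overleftarrow{\Phi}([\cdot,\cdot]_P,\cdots)$, and contraction corrections built from $d\Theta_i$. Applying $s^*$ to the left-hand side of \eqref{lie25} and repeatedly using the Cartan relations $d\circ L_X=L_X\circ d$ and $L_X\circ\iota_Y-\iota_Y\circ L_X=\iota_{[X,Y]}$ together with this formula collapses the whole expression to $d[P,\overleftarrow{\Phi}](\Theta_1,\Theta_2,\Theta_3,\Theta_4)+(\iota_{[P,\overleftarrow{\Phi}](\Theta_1,\Theta_2,\Theta_3,\cdot)}d\Theta_4+c.p.)$, which vanishes precisely because $[P,\overleftarrow{\Phi}]=0$. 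The serious difficulty here is purely organizational: tracking the $(3,1)$-, $(2,2)$- and mixed unshuffle patterns (the $c.p.(4)$, $c.p.(6)$, $c.p.(12)$ bookkeeping) across forms of arbitrary degree, with no conceptual gap once the graded \eqref{important2} is in hand.

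Finally, for the morphism in Part~(iii) I would verify the three defining conditions, generalizing Proposition~\ref{Lie2homo}. Condition~(1) follows from the graded analogue of \eqref{Eqt:PsharpwrtPbracket}, which measures the failure of $\wedge^\bullet P^\sharp$ to intertwine $[\cdot,\cdot]_P$ with the Schouten bracket by a contraction against $\tfrac12[P,P]=\overrightarrow{\Phi}-\overleftarrow{\Phi}$; by the $s^*/t^*$ relations this defect is exactly $T\nu(\Theta_1,\Theta_2)$, with $\nu$ as in \eqref{nnu}. Condition~(2) is the compatibility of $\nu$ with the two actions and with $J$, checked after applying the left translation $\overleftarrow{\phantom{u}}$ and comparing with the definition of $\triangleright$. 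Since the target graded Lie $2$-algebra of Part~(ii) is strict, its $3$-bracket drops out and condition~(3) reduces to $\wedge^\bullet p^\sharp[\Theta_1,\Theta_2,\Theta_3]_3=[\wedge^\bullet P^\sharp\Theta_1,\nu(\Theta_2,\Theta_3)]-\nu([\Theta_1,\Theta_2]_P,\Theta_3)+c.p.$; left-translating both sides and invoking the graded \eqref{important2} identifies the right-hand side with $[P,\overleftarrow{\Phi}](\Theta_1,\Theta_2,\Theta_3,\cdot)$, which is zero. Thus the single identity $[P,\overleftarrow{\Phi}]=0$ governs both \eqref{lie25} and condition~(3), and the theorem follows.
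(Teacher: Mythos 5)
Your proposal is correct and matches the paper's proof in all essentials: well-definedness of the $3$-bracket via iterated use of Lemma \ref{preparation}(ii) and injectivity of $s^*$, part (ii) by citation of Proposition \ref{exBCLX}, the graded identities by reducing to Theorem \ref{Thm:case1} through the Leibniz rules of $s^*(\cdot\triangleright\cdot)$ and $s^*[\cdot,\cdot,\cdot]_3$ (with $[P,\overleftarrow{\Phi}]=0$ entering exactly through the graded analogue of \eqref{important2}, both for \eqref{lie25} and for the third morphism condition), and part (iii) by extending \eqref{Eqt:PsharpwrtPbracket} by the Leibniz rule so that the defect of $\wedge^\bullet P^\sharp$ equals $T\nu(\Theta_1,\Theta_2)$. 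The only divergence is one of explicitness rather than of method: where the paper compresses the verification into ``Theorem \ref{Thm:case1} plus the Leibniz rules plus injectivity of $s^*,t^*$'' and omits the remaining morphism checks, you propose to carry out the graded computations in full (invoking the cubic $L_\infty$-structure of Proposition \ref{Lin} for \eqref{lie21}--\eqref{lie23} and a graded \eqref{important2} for \eqref{lie25}), which is precisely the content the paper leaves implicit.
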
	
\begin{proof}  Statement $\mathrm{(ii)}$ is well-known (e.g.  see \cite{BCLX}). We only prove the other two.

	$\mathrm{(i)}$  We first show that $ [ \tobefilledin,\tobefilledin,\tobefilledin ]_3$ is well-defined. Namely, to every triple $(\Theta_1\in \Omega^{p}_{\mult}(\Gpd),\Theta_2\in \Omega^{q}_{\mult}(\Gpd),\Theta_3\in \Omega^{s}_{\mult}(\Gpd))$ there exists a unique element $\mu\in \Omega^{p+q+s-2}(M)$ such that
\begin{eqnarray}\label{hwell}
d\iota_{(\iota_{(\iota_{\overleftarrow{\Phi}}\Theta_1)}\Theta_2)} \Theta_3+\big(\iota_{(\iota_{(\iota_{\overleftarrow{\Phi}} \Theta_1)} \Theta_2)}d\Theta_3+c.p.)=s^*\mu.
\end{eqnarray}
In fact, we have $d\Theta_3\in \Omega^{s+1}_{\mult} (\Gpd)$ and $\overleftarrow{\Phi}(\Theta_1,\Theta_2,~\cdot~)= \overleftarrow{\Phi(\theta_1,\theta_2,~\cdot~)}$. Using
 $\mathrm{(ii)}$ of Lemma \ref{preparation} repeatedly and the fact that $s^*$ is injective, we obtain the $\mu$ in \eqref{hwell}.

  { Further,  we note that $s^*(\cdot \triangleright  \cdot)$ and $s^*[\cdot,\cdot,\cdot]_3$ are subject to the Leibniz rules, namely 
 \begin{eqnarray*}
 s^*((\Theta_1\wedge \Theta_2)\triangleright \gamma)&=&\Theta_1\wedge s^*(\Theta_2\triangleright \gamma)+(-1)^{|\Theta_2|(|\gamma|-1)} s^*(\Theta_1\triangleright \gamma)\wedge \Theta_2,\\
 s^*(\Theta\triangleright (\gamma_1\wedge \gamma_2))&=&s^*(\Theta\triangleright \gamma_1)\wedge (s^*\gamma_2)+(-1)^{(|\Theta|-1)|\gamma_1|}(s^*\gamma_1)\wedge s^*(\Theta\triangleright \gamma_2),\\ \mbox{ and }
 {s^*[\Theta_1\wedge \Theta_2,\Theta_3,\Theta_4]_3}&=&\Theta_1\wedge s^*[\Theta_2,\Theta_3,\Theta_4]_3+(-1)^{|\Theta_2|(|\Theta_3|+|\Theta_4|)}s^*[\Theta_1,\Theta_3,\Theta_4]_3\wedge \Theta_2.
 \end{eqnarray*}
% and the relation
% \[(s^*-t^*)[\Theta_1,\Theta_2,\Theta_3]_3=\iota_{\frac{1}{2}[P,P]}(\Theta_1\wedge \Theta_2 \wedge \Theta_3)+\big(\iota_{\frac{1}{2}[P,P](\Theta_1,\Theta_2,\cdot)}d\Theta_3+c.p.\big),\qquad \forall \Theta_i\in \Omega^1_{\mult}(\Gpd).\]
  Based on Theorem \ref{Thm:case1}, the Leibniz rules of $s^*(\cdot \triangleright  \cdot)$ and  $s^*[\cdot,\cdot,\cdot]_3$, and the fact that $s^*,t^*$ are injective maps, we can verify the desired graded weak Lie $2$-algebra. }

$\mathrm{(iii)}$ In what follows,  $\wedge^\bullet P^\sharp$ is abbreviated to $P^\sharp$, and similarly,  $\wedge^\bullet p^\sharp$ to  $p^\sharp$.     Formula \eqref{Eqt:PsharpwrtPbracket} can be extended by the Leibniz rule to higher degree differential forms: 
 \begin{eqnarray*}
P^\sharp[\Theta_1,\Theta_2]_P-[P^\sharp \Theta_1,P^\sharp \Theta_2]=(\mathrm{id}\otimes P^\sharp)(\iota_{\iota_{\frac{1}{2}[P,P]}\Theta_1} \Theta_2)
\end{eqnarray*}
for all $\Theta_1, \Theta_2\in \Omega^\bullet_{\mult}(\Gpd)$.
Using $\frac{1}{2}[P,P]=\overrightarrow{\Phi}-\overleftarrow{\Phi}$, $\rm(ii)$ of Lemma \ref{preparation}, and the relations
\[(\mathrm{id}\otimes P^\sharp)(\overleftarrow{v}\otimes s^*\mu)=\overleftarrow{v\otimes p^\sharp(\mu)},\qquad (\mathrm{id}\otimes P^\sharp)(\overrightarrow{v}\otimes t^*\mu)=\overrightarrow{v\otimes p^\sharp(\mu)}\qquad \forall v\in \Gamma(\wedge^\bullet A),\mu\in \Omega^\bullet(M),\]
we further obtain
\begin{eqnarray*}
P^\sharp[\Theta_1,\Theta_2]_P-[P^\sharp \Theta_1,P^\sharp \Theta_2]&=&(\mathrm{id}\otimes P^\sharp)(\iota_{\iota_{\overrightarrow{\Phi}-\overleftarrow{\Phi}} \Theta_1} \Theta_2)
\\ &=&\overrightarrow{(\mathrm{id}\otimes p^\sharp)(\iota_{\iota_\Phi \theta_1}  \theta_2)}-\overleftarrow{(\mathrm{id}\otimes p^\sharp)(\iota_{\iota_\Phi \theta_1}  \theta_2)}\\ &=&T(\nu(\Theta_1,\Theta_2)).
\end{eqnarray*}
Taking advantage of these relationships, what remains is some direct verification of the said morphism of graded weak Lie $2$-algebras. We omit the details.
\end{proof}	
\begin{remark}
	
 {If the quasi-Poisson groupoid $(\Gpd,P,\Phi)$ degenerates to a Poisson groupoid, namely $\Phi=0$, then what we obtain from Theorem \ref{Thm:secondmain} are two \emph{strict} graded Lie $2$-algebras together with a \emph{strict} graded Lie $2$-algebra homomorphism between them, i.e., those given by \cite{CLL}*{Theorem 4.14}.}
 
\end{remark}

\subsection{The special case of quasi-Poisson Lie groups}
 In this part, we study a relatively easy situation of quasi-Poisson groupoids, called quasi-Poisson Lie groups,  i.e., when the base manifold $M$ of the groupoid $\Gpd$ in question is a single point. For clarity of notations, we use $G$ to denote such a group instead of $\Gpd$, and the Lie algebra of $G$ is denoted by $\g=T_{e}G$. 
 
\begin{corollary}\label{Prop:quasiPoissonLieGroup}
Let $(G,P,\Phi)$ be a quasi-Poisson Lie group. The following statements are true.
\begin{itemize}
\item[\rm  (1)] The Lie algebra $(\Omega^1_{\mult}(G),[~\cdot~,~\cdot~]_P)$   {  is isomorphic to the Lie algebra $((\g^*)^G,[\cdot,\cdot]_{\g^*})$} ($G$-invariant $1$-forms); 
\item[\rm  (2)] The triple   $\g\xrightarrow{T=\overleftarrow{(~\cdot~)}-\overrightarrow{(~\cdot~})}\mathfrak{X}^1_{\mult}(G)$ %with the Lie bracket on $\g$ and Schouten bracket on $\mathfrak{X}^1_{\mult}(G)$ 
constitutes a Lie $2$-algebra;
\item[\rm (3)] There is a weak Lie $2$-algebra morphism formed by $(P^\sharp,0,\nu)$:
\begin{equation*}
 		\xymatrix{
 			0\ar@{^{}->}[d]_{0} \ar[r]^{0} & \g \ar@{^{}->}[d]^{T} \\
 			\Omega^1_{\mult}(G)\ar[r]^{P^\sharp}  \ar@{.>}[ru]^{\nu} & \mathfrak{X}^1_{\mult}(G)},
 	\end{equation*}
 	where  $\nu: \wedge^2\Omega^1_{\mult}(G)  \to \g$ is defined by
	\[\nu(\Theta_1,\Theta_2)=-\Phi(\theta_1,\theta_2,~\cdot~),\]
	where $\theta_i\in \g^*$ is determined by $R_g^*\Theta_{i}(g)=\theta_i$.
\end{itemize}
	\end{corollary}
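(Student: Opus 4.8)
The plan is to derive all three statements from the groupoid-level results of this section by specializing to the case where the base $M$ is a single point, so that $A=\g$, $\Gamma(A)=\g$, $TM=0$, and $\Omega^1(M)=0$.

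Statements (2) and (3) are then almost immediate. Statement (2) is Proposition \ref{exBCLX} (equivalently the degree-$0$ part of Theorem \ref{Thm:secondmain}(ii)) read over a point: there $T(u)=\overleftarrow{u}-\overrightarrow{u}$ is the difference of the left- and right-invariant vector fields generated by $u\in\g$, and the Lie $2$-algebra axioms are inherited verbatim. For (3) I would invoke Proposition \ref{Lie2homo}. Over a point $TM=0$, hence $p=\mathrm{pr}_{TM\otimes A}(P|_M)=0$ and $p^\sharp=0$; moreover $\Omega^1(M)=0$ collapses the left-hand column of the morphism diagram to $0$. Thus the triple $(P^\sharp,p^\sharp,\nu)$ of Proposition \ref{Lie2homo} becomes precisely $(P^\sharp,0,\nu)$ with $\nu(\Theta_1,\Theta_2)=-\Phi(\theta_1,\theta_2,~\cdot~)$, and the three morphism identities are the specializations of those already checked there.

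The real content is in (1). First, because $\Omega^1(M)=0$, Theorem \ref{Thm:case1} already exhibits $(\Omega^1_{\mult}(G),[~\cdot~,~\cdot~]_P)$ as a weak Lie $2$-algebra whose homotopy map lands in $\Omega^1(M)=0$; the Jacobi identity \eqref{lie21} therefore reduces to the ordinary Jacobi identity and $[~\cdot~,~\cdot~]_P$ is a genuine Lie bracket. (This is the structural reason why, although $\g^*$ need not be a Lie algebra for a quasi-Lie bialgebra, its $\mathrm{Ad}^*$-invariant part is.) It then remains to produce a Lie algebra isomorphism onto $((\g^*)^G,[\cdot,\cdot]_{\g^*})$. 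I would take the leading-term map $\Theta\mapsto\theta:=\Theta(e)\in\g^*$. Expanding the defining relation $m^*\Theta=\mathrm{pr}_1^*\Theta+\mathrm{pr}_2^*\Theta$ at a point $(g,h)$ via $m_*(X,Y)=(R_h)_*X+(L_g)_*Y$ and separating the $Y=0$ and $X=0$ parts gives two conditions: the first, $R_h^*\Theta_{gh}=\Theta_g$, at $g=e$ forces $\Theta$ to be right-invariant with $R_g^*\Theta(g)=\theta$ independent of $g$; the second, after writing $R_{(gh)^{-1}}\circ L_g=c_g\circ R_{h^{-1}}$ as conjugation $c_g$ followed by a right translation, forces $\mathrm{Ad}^*_g\,\theta=\theta$ for all $g$, i.e. $\theta\in(\g^*)^G$. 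Conversely the right-invariant extension of any $\mathrm{Ad}^*$-invariant covector is multiplicative, so $\Theta\mapsto\theta$ is a linear isomorphism $\Omega^1_{\mult}(G)\xrightarrow{\ \cong\ }(\g^*)^G$.

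Finally I would verify that this isomorphism intertwines the brackets, and I expect this to be the main (if routine) obstacle. The key input is that a multiplicative bivector field vanishes at the identity, $P(e)=0$, so that $P^\sharp\Theta_i$ vanishes at $e$ for right-invariant $\Theta_i$. Evaluating \eqref{Eqt:Pbracket1forms} at $e$ and using $\mathcal{L}_V=\iota_V d+d\,\iota_V$, the interior-product-with-$d$ parts of the two Lie-derivative terms vanish because $V(e)=0$, and a short sign computation leaves $[\Theta_1,\Theta_2]_P(e)=d(P(\Theta_1,\Theta_2))|_e$. Since $P(e)=0$, this differential is exactly the intrinsic derivative (linearization) of $P$ at $e$ evaluated on $\theta_1\wedge\theta_2$, i.e. $\langle\delta(\,\cdot\,),\theta_1\wedge\theta_2\rangle$ for the cobracket $\delta:\g\to\wedge^2\g$ of the associated quasi-Lie bialgebra, which by definition is $[\theta_1,\theta_2]_{\g^*}$. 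The care required here is purely in matching conventions for $\delta$ and for the dual bracket and in tracking the signs in the collapse of the $P$-bracket; once this is done, $\Theta\mapsto\theta$ is the asserted Lie algebra isomorphism and (1) follows.
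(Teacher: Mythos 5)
Your proposal is correct, and its overall architecture is the same as the paper's: statements (2) and (3) are obtained by specializing Proposition \ref{exBCLX} and Proposition \ref{Lie2homo} to a one-point base (where $p^\sharp=0$ and $\Omega^1(M)=0$), and statement (1) uses the evaluation-at-identity map $\Theta\mapsto\theta=\Theta|_e=R_g^*\Theta_g=L_g^*\Theta_g$ as the isomorphism onto $(\g^*)^G$. The one place you genuinely diverge is the bracket correspondence in (1): the paper disposes of it by citing \cite{CLL}*{Example 4.2}, whereas you verify it directly, using that a multiplicative bivector satisfies $P(e)=0$, so that evaluating \eqref{Eqt:Pbracket1forms} at $e$ collapses (via $\mathcal{L}_V=\iota_Vd+d\iota_V$ and $\iota_{P^\sharp\Theta_1}\Theta_2=P(\Theta_1,\Theta_2)$) to $[\Theta_1,\Theta_2]_P(e)=d\bigl(P(\Theta_1,\Theta_2)\bigr)\big|_e$, which is the intrinsic derivative of $P$ at $e$ paired with $\theta_1\wedge\theta_2$, i.e.\ the cobracket of the associated quasi-Lie bialgebra, hence $[\theta_1,\theta_2]_{\g^*}$. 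That computation is sound (the two Lie-derivative terms each contribute $+d(P(\Theta_1,\Theta_2))|_e$ and the third term subtracts one copy), and it makes the corollary self-contained rather than dependent on the companion paper; it also surfaces the conceptual point, only implicit in the paper, that the homotopy map landing in $\Omega^1(M)=0$ is exactly why the $\mathrm{Ad}^*$-invariant part of $\g^*$ is an honest Lie algebra even though $\g^*$ itself is not one for a quasi-Lie bialgebra. The trade-off is length and the convention-matching for $\delta$ and $[\cdot,\cdot]_{\g^*}$ that the citation would have absorbed; both routes are valid.
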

	\begin{proof} {The isomorphism between $\Omega^1_{\mult}(G)$ and $(\g^*)^G$  sends $\Theta\in \Omega^1_{\mult}(G)$ to $ \theta\in(\g^*)^G$ given by $ \theta:=R_g^*\Theta_g=L_g^*\Theta_g$, for any $g\in G$. This is due to $\Theta$ being multiplicative. Of course, one could simply set $\theta=\Theta|_e$.
			
			By \cite{CLL}*{Example 4.2},   $[\Theta_1,\Theta_2]_P$ is sent to $[\theta_1,\theta_2]_*$, which  proves Statement (1). 
	Statements (2) and (3) are direct consequences of Theorem \ref{Thm:case1} and Proposition \ref{Lie2homo}.	}
	\end{proof}
 
 \begin{remark}We claimed that $ \Omega^1_{\mult}(G)$ is a Lie algebra whose structure map is the $P$-bracket $[~\cdot~,~\cdot~]_P$. However, be aware that the large space $\Omega^1(G)$  is not a Lie algebra with respect to $[~\cdot~,~\cdot~]_P$. 
 	Please also compare with the previous result (Example \ref{Example:NP2}) that  {$\Omega^\bullet(G)$}   carries a cubic $L_\infty$-algebra structure. 
 \end{remark}

\begin{example}\label{abeliangroup}
Let $V$ be a finite dimensional vector space. Viewing it as an abelian group, we have the identifications
\[\mathfrak{X}_{\mult}^k(V)=\Hom(V, \wedge^k V) \qquad (\forall k\geqslant 1),\qquad \Omega_{\mult}^1(V)=V^*,\qquad\mbox{and ~}~ \Omega_{\mult}^l(V)=0 \qquad (\forall l\geqslant 2).\]
 
Now consider a Lie algebra $\g$   and the abelian group structure on the vector space $V:=\g^*$. We have a  Poisson structure on $V$ determined by $\{x,y\}_{P}=[x,y]_\g$, for all $x,y\in \g$ seen as linear functions on $\g^*$.  This Poisson structure is  widely known as the Kirillov-Kostant-Souriau (KKS) Poisson structure. It turns out that $(\g^*,P)$ forms a   Poisson Lie group which is particularly called the linear Poisson group associated to the given Lie algebra $\g$. %\textcolor{red}{a little too much}

Indeed,   
the Lie algebra $(\Omega^1_{\mult}(\g^*),[~\cdot~,~\cdot~]_{P})$ coincides with the Lie algebra $\g$; and the   Lie  $2$-algebra
associated with multiplicative vector field is of the form $\g^*\xrightarrow{0}\End(\g^*)$. Moreover, we have a Lie  $2$-algebra morphism
\begin{equation}\label{coad}
 		\xymatrix{
 			0\ar@{^{}->}[d]_{0} \ar[rr]^{0} && \g^* \ar@{^{}->}[d]^{T=0} \\
 			\g\ar[rr]^{P^\sharp} && \End(\g^*)} 
 	\end{equation}
 	where  $P^\sharp: \g\to \End(\g^*)$ is actually
	\[P^\sharp(x)=\ad^*_x,\qquad \forall x\in \g.\]
\end{example}

\section{The linear quasi-Poisson  $2$-group arising from a Lie $2$-algebra}
\label{Sec:linearQP2GP}
%Let $(\thetaalgebradegone\xrightarrow{d} \galgebradegzero,l_2,l_3)$ be a Lie  $2$-algebra as in \ref{??}. USE $\thetaalgebradegone$ and $\galgebradegzero $!!! \textcolor{red}{how?} 
%A $2$-term complex of vector spaces give an action Lie groupoid. 

This section focuses on linear quasi-Poisson $2$-groups, which serve as simple models for examination. These types of groups inherently involve an action Lie groupoid, leading us to first analyze multiplicative structures on general action Lie groupoids.

 %\textcolor{blue}{need to change}

\subsection{Multiplicative forms  and vector fields on an action Lie groupoid}
We recall the concept of an action Lie groupoid. For references, see \cites{Mackenzie}.  
Let $G$ be a Lie group, $M$ a manifold,  and $\sigma: G\times M\to M$  a Lie group action.
We adopt a particular notation $G\triangleright M\rightrightarrows M$ to denote   
the action Lie groupoid arising from the action  $\sigma$: the underlying space of $G\triangleright M$ is the Cartesian product  $G\times M$, the base manifold is $M$,    
the source and target maps are given, respectively, by
\[s(g,m)=m, \quad\mbox{ and }~ \qquad t(g,m)=gm, \]
and the multiplication in $G\triangleright M$ is computed by
$$(h,gm)(g,m)=(hg,m)$$
for all $g,h\in G$, $m\in M$. Here and in the sequel, for simplicity, $gm$ stands for the value   $\sigma(g,m)\in M$ of the group action. Also, we will use	  $\sigma_m:G\to M$ and $\sigma_g: M\to M$ to denote, respectively, the maps $g\mapsto gm$ and $m\mapsto gm$.

Next, we describe   multiplicative $1$-forms on action Lie groupoids.  
As before, we denote by $\g:=T_eG$  the tangent Lie algebra of $G$ and   by $\rho:\g\to \mathfrak{X}^1(M)$ the map given by $\rho(x)|_m:=(\sigma_m)_{*e}(x)$. 

\begin{proposition}\label{formaction}
	Let $\Theta\in \Omega^1(G\times M)$ be a $1$-form on the action Lie groupoid $G\triangleright M\rightrightarrows M$.  {Suppose that (under  the natural decomposition $T^*(G\triangleright M)=T^*G\times T^*M$)  }   the two components of $\Theta$ are given by $\Theta^G:G\times M\to T^*G$ and $\Theta^M:G\times M\to T^*M$, respectively. Then $\Theta$ is multiplicative if and only if there exists a map $\mu:M\to \g^*$ such that for any $g,h\in G$ and $m\in M$ the following equalities hold: 
	\begin{eqnarray*}
		\Ad_h^*\mu_{hm}-\mu_m&=&\rho^*\Theta^M_{(h,m)};\\
		\Theta^G_{(g,m)}&=&R_{g^{-1}}^* \mu_{gm};\\
		\Theta^M(hg)&=&\sigma_g^*\big(\Theta^M(h)\big)+\Theta^M(g).
	\end{eqnarray*}
	The last equation is indeed saying that $\Theta^M$ is  a $1$-cocycle when it is regarded as a map $G\to \Omega^1(M)$  {(with respect to the obvious   $G$-module $\Omega^1(M)$)}.

\end{proposition}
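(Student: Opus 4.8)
The plan is to make the multiplicativity relation $m^*\Theta=\pr_1^*\Theta+\pr_2^*\Theta$ fully explicit by trivializing the manifold of composable pairs. Since the source and target of $G\triangleright M$ are $s(g,m)=m$ and $t(g,m)=gm$, a pair $((h,n),(g,m))$ is composable exactly when $n=gm$, so $(G\triangleright M)^{(2)}\cong G\times G\times M$ via $(h,g,m)\mapsto((h,gm),(g,m))$. Under this identification the groupoid multiplication sends $(h,g,m)$ to $(hg,m)$, while $\pr_1(h,g,m)=(h,gm)$ and $\pr_2(h,g,m)=(g,m)$. First I would compute the three pullbacks as $1$-forms on $G\times G\times M$, evaluated on a tangent vector $(X_h,Y_g,Z_m)\in T_hG\oplus T_gG\oplus T_mM$: the differential of the multiplication splits as $(R_g)_*X_h+(L_h)_*Y_g$, and the differential of the action $\sigma$ splits as $(\sigma_m)_*Y_g+(\sigma_g)_*Z_m$. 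Writing $\Theta=\Theta^G+\Theta^M$ componentwise then turns multiplicativity into a single identity that is linear in $(X_h,Y_g,Z_m)$.

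The second step is to separate that identity into its three independent components, obtained by letting only one of $X_h$, $Y_g$, $Z_m$ be nonzero. The $X_h$-component yields a relation purely in $\Theta^G$, namely $\Theta^G_{(h,gm)}=R_g^*\Theta^G_{(hg,m)}$; the $Z_m$-component yields a relation purely in $\Theta^M$, namely $\Theta^M_{(hg,m)}=\sigma_g^*\Theta^M_{(h,gm)}+\Theta^M_{(g,m)}$, which is exactly the asserted cocycle condition once $\Theta^M$ is read as a map $G\to\Omega^1(M)$; and the $Y_g$-component yields the mixed relation $L_h^*\Theta^G_{(hg,m)}=\sigma_m^*\Theta^M_{(h,gm)}+\Theta^G_{(g,m)}$. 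Multiplicativity of $\Theta$ is thus equivalent to these three relations holding for all $h,g,m$.

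Next I would introduce $\mu_m:=\Theta^G_{(e,m)}\in\g^*$, the restriction of the $G$-component to the unit section. Using $R_{g^{-1}}^*R_g^*=\id$ one checks that the $X_h$-relation is equivalent to the second displayed formula $\Theta^G_{(g,m)}=R_{g^{-1}}^*\mu_{gm}$, and that this forces $\mu$ to be the above restriction. Substituting this into the mixed $Y_g$-relation and specializing to $g=e$ recovers the first displayed formula: here $R_{h^{-1}}\circ L_h$ is the conjugation $C_h$ (i.e. $x\mapsto hxh^{-1}$), whose differential at $e$ is $\Ad_h$, so $L_h^*\Theta^G_{(h,m)}=\Ad_h^*\mu_{hm}$, while $\sigma_m^*$ at the unit is precisely the dual anchor $\rho^*$; this gives $\Ad_h^*\mu_{hm}-\mu_m=\rho^*\Theta^M_{(h,m)}$.

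The step I expect to be the main obstacle is the converse bookkeeping: recovering the full mixed $Y_g$-relation for all $g$ from the first displayed formula, which only encodes its $g=e$ instance, together with the other two conditions. The idea is to transport the relation from the unit to an arbitrary $g$ using the group-theoretic identity $R_{(hg)^{-1}}\circ L_h=C_h\circ R_{g^{-1}}$ and the equivariance $\sigma_m=\sigma_{gm}\circ R_{g^{-1}}$. Together with the second displayed formula these rewrite $L_h^*\Theta^G_{(hg,m)}-\Theta^G_{(g,m)}$ as $R_{g^{-1}}^*(\Ad_h^*\mu_{hgm}-\mu_{gm})$; applying the first displayed formula at the point $gm$ turns the inner difference into $\rho^*\Theta^M_{(h,gm)}$, and the equivariance of $\sigma$ together with $\rho^*=\sigma_{gm}^*$ at the unit finally identifies this with $\sigma_m^*\Theta^M_{(h,gm)}$, the right-hand side of the mixed relation. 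Keeping the base points of all the left and right translations and pullbacks straight throughout is the real labor; once these identities are secured the equivalence closes in both directions.
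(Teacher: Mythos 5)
Your proposal is correct and follows essentially the same route as the paper: you unpack multiplicativity into the identical three component identities (one each in the $X_h$, $Y_g$, $Z_m$ directions), set $\mu_m:=\Theta^G_{(e,m)}$, and specialize to $h=e$ (resp.\ $g=e$) to obtain the displayed formulas, with the conjugation/$\Ad_h$ and $\sigma_m^*|_e=\rho^*$ identifications matching the paper's computation. The only difference is presentational — the paper phrases multiplicativity via products of composable tangent vectors rather than via the identification $(G\triangleright M)^{(2)}\cong G\times G\times M$, and it dismisses the converse as straightforward, whereas your final paragraph (transporting the $g=e$ instance to general $g$ via $R_{(hg)^{-1}}\circ L_h=C_h\circ R_{g^{-1}}$ and $\sigma_{gm}\circ R_{g^{-1}}=\sigma_m$) carries that verification out correctly.
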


\begin{proof}  By definition, a  $1$-form $\Theta\in \Omega^1(G\triangleright M)$ is multiplicative if and only if 
	\begin{eqnarray}\label{multformac}
		\Theta(Y_h ~\cdot~ X_g, U_m)=\Theta(Y_h, X_g~\cdot~ U_m)+\Theta(X_g,U_m),\qquad \forall Y_h\in T_h G,X_g\in T_g G, U_m\in T_m M.
	\end{eqnarray}
	Identifying $T_{(g,m)}^* (G\times M)$ with $T_g^* G\times T_m^* M$, Equation \eqref{multformac} amounts to three conditions:
	  	\begin{eqnarray}
	\label{Gpart2'}\Theta^G_{(hg,m)}(R_{g*}Y_h)&=&\Theta^G_{(h,gm)}(Y_h),\\
	\label{Gpart'}\Theta^G_{(hg,m)}(L_{h_*}X_g)&=&\Theta^M_{(h,gm)}(\sigma_{m*} X_g)+\Theta^G_{(g,m)}(X_g),\\
	\label{Mpart'}\Theta^M_{(hg,m)}(U_m)&=&\Theta^M_{(h,gm)}(\sigma_{g*}U_m)+\Theta^M_{(g,m)}(U_m),
	\end{eqnarray}
	where $\sigma_m:G\to M$ and $\sigma_g: M\to M$ are, respectively, the maps $g\mapsto gm$ and $m\mapsto gm$.

	Suppose that $\Theta$ is multiplicative. Define $\mu_{m}=\Theta^G_{(e,m)}$ for all $m\in M$. Then by taking $h=e$ in \eqref{Gpart2'}, we have
	\[R_{g}^* \Theta^G_{(g,m)}=\Theta^G_{(e,gm)} =\mu_{gm}.\]
	Substituting this relation into \eqref{Gpart'}, we further obtain
	\[L_h^*R_{{(hg)}^{-1}}^*\mu_{hgm}=\sigma_m^*\Theta^M_{(h,gm)}+R_{g^{-1}}^*\mu_{gm},\]
	which implies that 
	\[\Ad_{h}^*\mu_{hm}-\mu_m=\rho^*\Theta^M_{(h,m)}.\]
	The identity $\Theta^M(hg)=\sigma_g^*\big(\Theta^M(h)\big)+\Theta^M(g)$ is just a variation of Equation \eqref{Mpart'}.
	The converse statement  is straightforward to verify.\end{proof}

Maintaining  the above assumptions, we have two other statements.

\begin{corollary}\label{transitiveform}
	\begin{itemize}
		\item[\rm (i)] Suppose that the action $\sigma$ is transitive (i.e., for any $m\in M$, the map $\sigma(-,m): G\to M$ is surjective). Then $\Theta=(\Theta^G, \Theta^M)$ is multiplicative if and only if  it is determined by a smooth map $\mu:M\to \g^*$  satisfying  $\Ad_h^*\mu_{hm}-\mu_m\in (\ker \rho)^\perp$ (for all $h\in G$)  such that  
		\[\Theta^G_{(g,m)}:=R_{g^{-1}}^* \mu_{gm}, \qquad \rho^*\Theta^M_{(h,m)}:=\Ad_h^*\mu_{hm}-\mu_m,\]
		for all $g,h\in G$ and $m\in M$.
		\item[\rm (ii)] Suppose that the action $\sigma$ is trivial,  i.e., $\sigma(g,m)=m$ for all $g\in G,m\in M$. Then $\Theta=(\Theta^G, \Theta^M)$ is multiplicative if and only if\begin{enumerate}
			\item 
			$\iota_m^*(\Theta^G)\in \Omega^1_{\mult}(G)$ for all $m\in M$, where $\iota_m: G\to G\times M$ is the embedding map $g\mapsto (g,m)$;
			\item  the map $\Theta^M:G\to \Omega^1(M)$  satisfies $\Theta^M(hg)=\Theta^M(h)+\Theta^M(g), \forall g,h\in G.$
		\end{enumerate}
	\end{itemize}
	
\end{corollary}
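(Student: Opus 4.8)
The plan is to specialize Proposition~\ref{formaction} to the two degenerate actions and simplify. Both statements are corollaries, so the strategy is to invoke the three equations characterizing multiplicativity from Proposition~\ref{formaction}, insert the hypothesis on $\sigma$ (transitive or trivial), and read off what survives. The only genuine work is bookkeeping: tracking which maps become injective or vanish, and reinterpreting the surviving conditions in intrinsic terms.

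For part~(i), I would start from the three conditions of Proposition~\ref{formaction}, namely $\Theta^G_{(g,m)}=R_{g^{-1}}^*\mu_{gm}$, $\Ad_h^*\mu_{hm}-\mu_m=\rho^*\Theta^M_{(h,m)}$, and the cocycle identity on $\Theta^M$. When $\sigma$ is transitive, the point is that $\rho=\Lie(\sigma_m)$ is fiberwise surjective onto $T_m M$ (transitivity of the action forces the infinitesimal action to be surjective), so $\rho^*:T_m^*M\to\g^*$ is injective with image exactly $(\ker\rho)^\perp$. Consequently the datum $\Theta^M$ is completely recovered from $\mu$ via the middle equation, provided the compatibility $\Ad_h^*\mu_{hm}-\mu_m\in(\ker\rho)^\perp$ holds so that a preimage under $\rho^*$ exists. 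I would check that the cocycle identity for $\Theta^M$ is then automatic, being a formal consequence of the multiplicativity relation $\Ad_{hg}^*=\Ad_g^*\Ad_h^*$ applied to $\mu$; this is the step most likely to need a short explicit verification rather than a one-line citation. Conversely, any $\mu$ obeying the stated range condition produces a well-defined multiplicative $\Theta$, since injectivity of $\rho^*$ makes $\Theta^M$ unambiguous.

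For part~(ii), when $\sigma$ is trivial we have $\sigma_g=\id_M$ and $\sigma_m:G\to M$ constant, so $\rho=0$ and the maps $R_{g^{-1}}^*,L_{h*}$ interact trivially with the $M$-direction. I would substitute $\sigma_g=\id_M$ into equations \eqref{Gpart2'}--\eqref{Mpart'}: the $\g^*$-compatibility $\Ad_h^*\mu_{hm}-\mu_m=\rho^*\Theta^M_{(h,m)}$ collapses to $\Ad_h^*\mu_{hm}=\mu_m$ with no constraint on $\Theta^M$ from $\rho$ (as $\rho^*=0$), while \eqref{Mpart'} becomes the additive cocycle condition $\Theta^M(hg)=\Theta^M(h)+\Theta^M(g)$, which is condition~(2). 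For condition~(1), I would observe that with the action trivial the equations \eqref{Gpart2'} and \eqref{Gpart'} restricted to a fixed slice $m$ are precisely the multiplicativity relations defining a multiplicative $1$-form on the group $G$; that is, $\iota_m^*(\Theta^G)\in\Omega^1_{\mult}(G)$ for each $m$, since the $\sigma_{m*}X_g$ term in \eqref{Gpart'} vanishes.

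The main obstacle will be handling the range condition $(\ker\rho)^\perp$ cleanly in part~(i): one must argue that surjectivity of $\rho$ is equivalent to transitivity (standard, but worth stating) and that $\rho^*$ has image exactly $(\ker\rho)^\perp$, so that solvability of $\rho^*\Theta^M=\Ad_h^*\mu_{hm}-\mu_m$ for $\Theta^M$ is equivalent to the right-hand side lying in $(\ker\rho)^\perp$. Everything else is a direct transcription of Proposition~\ref{formaction}, so I would keep both verifications brief, flagging only the automatic-cocycle step and the orthogonal-complement identification as the points requiring a sentence of justification.
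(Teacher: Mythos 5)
Your proposal is correct and follows essentially the same route as the paper's proof: specialize Proposition \ref{formaction}, use that transitivity makes $\rho$ fiberwise surjective so that $\rho^*$ is injective with image exactly $(\ker\rho)^\perp$, and for (ii) observe that with $\sigma_{m*}=0$ and $\sigma_g=\mathrm{id}$ the conditions \eqref{Gpart2'}--\eqref{Gpart'} on a fixed slice become multiplicativity of $\iota_m^*(\Theta^G)$ on $G$ while \eqref{Mpart'} becomes the additive cocycle condition. One caveat on part (i): the cocycle identity for $\Theta^M$ is not a formal consequence of $\Ad_{hg}^*=\Ad_g^*\Ad_h^*$ alone --- after applying the injective map $\rho_m^*$ to both sides you must commute $\rho_m^*$ past $\sigma_g^*$, which requires the equivariance relation $\sigma_{g*}\circ\rho=\rho\circ\Ad_g$, and this is exactly the relation the paper's proof singles out; since you flagged this step for explicit verification, the computation would surface it, but your stated justification as written is incomplete.
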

\begin{proof} (i) As   $\rho$ is transitive,   $\rho^*$ is injective. Then for any $\mu\in C^\infty(M,\g^*)$,    if it holds that $\Ad_h^*\mu-\mu\in (\ker \rho)^\perp=\mathrm{Im} \rho^*$, there exists a unique $\Theta_{(h,~\cdot~)}^M\in \Omega^1(M)$ such that $\Ad_h^*\mu-\mu=\rho^*\Theta^M_{(h,~\cdot~)}$. It is straightforward to check that $\Theta^G_{(g,m)}:=R_{g^{-1}}^*\mu_{gm}$ and $\Theta^M$ together define a multiplicative $1$-form by  the relation $\rho\circ \Ad_g=\sigma_{g*}\circ \rho: \g\to \mathfrak{X}^1(M)$.
	The converse is similar to check.
	
	(ii) If the action is trivial, then $\rho=\sigma_{m_*}=0$ and $\sigma_{g*}=\mathrm{\id}$. The conclusion  direct follows by   
	Proposition \ref{formaction}.
\end{proof}

\begin{example} %Again, consider the action Lie groupoid  $ G\triangleright M\rightrightarrows M$  arising from an action $\sigma:G\times M\to M$.
	\begin{itemize}
		\item[\rm  (1)]
		Given   $\gamma\in \Omega^1(M)$, %define $\mu: M\to \g^*$ by $\mu_m:=\rho^*\gamma_m$. Then 
		by setting 
		\[\Theta^G_{(h,m)}=\sigma_m^*\gamma_{hm},\qquad \Theta^M_{(h,m)}=\sigma_h^*\gamma_{hm}-\gamma_m,
		\quad \forall h\in G, m\in M\]
		we obtain a multiplicative $1$-form on the action Lie groupoid $G\triangleright M$. In fact, the said $1$-form is $\Theta=t^*\gamma-s^*\gamma$.
		%For any $\gamma\in \Omega^k(M)$, the $k$-form
		%\[s^*\gamma-t^*\gamma=\mathrm{pr}_M^*\gamma-\sigma^*\gamma\in \Omega^k(G\times M)\]
		%is a multiplicative $k$-form on $\Gpd$, where $\mathrm{pr}_M: G\times M\to M$ is the projection. In fact,
		%\[(s^*\gamma-t^*\gamma)_{(g,m)}=\gamma_m-\sum_{s=0}^k(\sigma_m^{*s}\otimes \sigma_g^{*k-s})\gamma_{gm}\in \oplus_{s=0}^k \big( \wedge^s T^*_g G\otimes (\wedge^{k-s} T^*_m M)\big).\]
		%Moreover, its $\wedge^k T^*G$ and $\wedge^k T^*M$-parts are respectively
		%\[(s^*\gamma-t^*\gamma)^G_{(g,m)}=-\sigma_m^*\gamma_{gm},\qquad (s^*\gamma-t^*\gamma)^M_{(g,m)}=\gamma_m-\sigma_g^*\gamma_{gm},\]
		%and its $\wedge^p T^*G\otimes \wedge^{k-p} T^*M$-part is ($1\leqslant p<k$)
		%\[(s^*\gamma-t^*\gamma)^{p,k-p}_{(g,m)}=-(\wedge^p \sigma_m\otimes (\wedge^{k-p}\sigma_g^*))\gamma_{gm}.\]
		\item[\rm  (2)]Let  $\alpha\in \Omega^1(G)$ be a multiplicative $1$-form  on the Lie group $G$. Then $\mathrm{pr}_G^*\alpha\in \Omega^1(G\times M)$ is a multiplicative $1$-form on the action Lie groupoid on $G\triangleright M$, where $\mathrm{pr}_G: G\times M\to G$ is the projection.
	\end{itemize}
\end{example}

 {Now we turn to  multiplicative vector fields on the action Lie groupoid $G\triangleright M\rightrightarrows M$. 
Consider a general vector field of the form $$X=(X^G,X^M)\in \mathfrak{X}^1(G\triangleright M) ,$$ where $X^G: G\times M\to TG$ and $X^M: G\times M\to TM$ are determined by the natural identification of $T(G\triangleright M)$ with $TG\times TM$. Then $X$ is multiplicative if and only if for all $g,h\in G, m\in M$, the following equation holds:
\begin{equation}
\label{Eqt:temp1}
\big(X^G_{(hg,m)},X^M_{(hg,m)}\big)=\big(X^G_{(h,gm)},X^M_{(h,gm)}\big)*\big(X^G_{(g,m)},X^M_{(g,m)}\big),\end{equation}
where $*$ is the multiplication on the tangent groupoid $TG\triangleright TM\rightrightarrows TM$, which is again an  action Lie groupoid (arising from the induced action $\sigma_*:TG\times TM\to TM$). Below we give more information about $X^G$ and $X^M$.}

\begin{proposition}\label{vvector fields}
Let $X=(X^G,X^M)\in \mathfrak{X}^1(G\times M)$ be a vector field on the action Lie groupoid $G\triangleright M\rightrightarrows M$. Then $X$ is multiplicative if and only if for all $g \in G$ and $ m\in M$, we have
\begin{itemize}
	\item $X^M(g,m)$   does not depend on $g$, and thus we treat   $X^M  \in   \mathfrak{X}^1(M)$;
	\item $\sigma_{m*} X_{(g,m)}^G =  X^M_{gm}-\sigma_{g*}X_m^M$;
	\item $X_{(hg,m)}^G = L_{h*} X_{(g,m)}^G+R_{g*} X^G_{(h,gm)}$ for all $h\in G$.
\end{itemize} 
 
\end{proposition}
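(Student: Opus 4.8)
The plan is to unwind the multiplicativity identity \eqref{Eqt:temp1} by making the groupoid multiplication $*$ on the tangent groupoid $TG\triangleright TM\rightrightarrows TM$ completely explicit, and then to read off its $TG$- and $TM$-components together with the composability constraint built into $*$. The whole statement is equivalent to saying that $X$, viewed as a map $G\triangleright M\to T(G\triangleright M)$, is a groupoid morphism, and \eqref{Eqt:temp1} is the pointwise expression of this.

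First I would record the structure maps of $TG\triangleright TM$. Differentiating $s(g,m)=m$, $t(g,m)=gm=\sigma(g,m)$ and $(h,gm)(g,m)=(hg,m)$, and using the identification $T(G\triangleright M)=TG\times TM$, one gets for an arrow $(Y,U)\in T_gG\times T_mM$ that $s_*(Y,U)=U$ and $t_*(Y,U)=\sigma_*(Y,U)=\sigma_{g*}U+\sigma_{m*}Y$; moreover the product rule in $G$ together with the chain rule for $\sigma$ give
\[
(Y_h,\ \sigma_{g*}U_m+\sigma_{m*}X_g)*(X_g,U_m)=(L_{h*}X_g+R_{g*}Y_h,\ U_m),
\]
where composability forces the $TM$-slot of the left factor to equal $t_*$ of the right factor.

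Applying this to $X_{(h,gm)}*X_{(g,m)}$, identity \eqref{Eqt:temp1} splits into three pieces: the composability constraint $X^M_{(h,gm)}=\sigma_{g*}X^M_{(g,m)}+\sigma_{m*}X^G_{(g,m)}$, the $TM$-equation $X^M_{(hg,m)}=X^M_{(g,m)}$, and the $TG$-equation $X^G_{(hg,m)}=L_{h*}X^G_{(g,m)}+R_{g*}X^G_{(h,gm)}$. The last is precisely the third bullet. Putting $g=e$ in the $TM$-equation shows $X^M_{(h,m)}=X^M_{(e,m)}$ for every $h$, so $X^M$ is independent of the group variable; this is the first bullet, and it lets me write $X^M_m$. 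Substituting this independence into the composability constraint and rearranging then yields $\sigma_{m*}X^G_{(g,m)}=X^M_{gm}-\sigma_{g*}X^M_m$, which is the second bullet. The converse is immediate, since the three bullets reproduce exactly the composability constraint, the $TM$-equation and the $TG$-equation, hence \eqref{Eqt:temp1}.

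The only delicate point is the bookkeeping inside the tangent groupoid: one must correctly derive the asymmetric combination $L_{h*}X_g+R_{g*}Y_h$ from the derivative of $(h,g)\mapsto hg$ and place it on the $TG$-slot, and one must keep track of the composability condition hidden in $*$, which is what ultimately produces the second bullet rather than a fourth independent relation. Once $*$ is pinned down, everything else is routine substitution, and no further groupoid-theoretic input is needed.
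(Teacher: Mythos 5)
Your proof is correct and follows essentially the same route as the paper: the paper likewise splits the multiplicativity identity \eqref{Eqt:temp1} into the $TM$-component, the composability (tangent-target) constraint, and the $TG$-component of the tangent groupoid multiplication, and reads off the three bullets from these. Your version simply makes explicit the tangent groupoid structure maps and the substitutions ($g=e$, etc.) that the paper leaves as "follows directly."
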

\begin{proof}Equivalently, we can unravel Equation 
	\eqref{Eqt:temp1} as follows: 
	\begin{eqnarray*}
	\label{condition1}X_{(g,m)}^M&=&X_{(hg,m)}^M;\\ 
	\label{condition2}X_{(g,m)}^G\triangleright X_{(g,m)}^M&=&X_{(h,gm)}^M;\\ 
	\label{condition3}X_{(h,gm)}^G~\cdot~ X_{(g,m)}^G&=&X_{(hg,m)}^G.
	\end{eqnarray*}
	Now the statement follows directly from the above three conditions.% \eqref{condition1}-\eqref{condition3}. 
\end{proof}

\begin{corollary}\label{transitivevec}
	Maintaining the  assumptions as in Proposition \ref{vvector fields}, the following statements are true.
	\begin{itemize}
		\item[\rm (i)] If the action $\sigma  :G\times M\to M$   is transitive, then we have a natural isomorphism 
		\[\mathfrak{X}^1_{\mult}(G\triangleright M)\cong\mathfrak{X}^1(M)\oplus \Gamma_{\mult}(\sigma^*\ker \rho),\qquad (X^G,X^M)\mapsto (X^M,\gamma),\]
		such that \begin{equation}\label{Eqt:XGexpressed} X^G_{(g,m)}=R_{g*}u_{gm}-L_{g*}{u_m}+R_{g*}\mathcal{\gamma}(g,m)\end{equation}
		where $u\in \Gamma((\ker \rho)^0)$  is the unique element satisfying  $X^M=\rho(u)$, $\Gamma_{\mult}(\sigma^*\ker \rho)\subset \Gamma(\sigma^*\ker \rho)$ is composed of elements $\gamma: G\times M\to  \sigma^*\ker \rho$ satisfying  
	\begin{eqnarray*}\label{gamma}
		\gamma(hg,m)=\Ad_{h}\gamma(g,m)+\gamma(h,gm),\qquad \forall g,h\in G, m\in M,
	\end{eqnarray*}
and we fix a decomposition $M\times \g=\ker \rho\oplus (\ker \rho)^0$.
				\item[\rm (ii)] If the action is trivial, then $X=(X^G, X^M)$ is multiplicative if and only if   $X^M\in \mathfrak{X}^1(M)$ and 
		$X^G_{(~\cdot~,m)}\in \mathfrak{X}^1(G)$ for any $m\in M$ is a multiplicative vector field on the Lie group $G$.
	\end{itemize}
\end{corollary}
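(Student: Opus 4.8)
The plan is to reduce both statements to the three pointwise conditions that characterize multiplicative vector fields in Proposition \ref{vvector fields}, and then specialize the action. I would dispose of (ii) first, since it is a quick specialization: when $\sigma$ is trivial one has $\rho=0$, $\sigma_{m*}=0$, $\sigma_{g*}=\id$, and $gm=m$ for all $g,m$. Under these simplifications the first condition of Proposition \ref{vvector fields} just says $X^M\in\mathfrak{X}^1(M)$, the second condition becomes the tautology $0=X^M_m-X^M_m$, and the third collapses to $X^G_{(hg,m)}=L_{h*}X^G_{(g,m)}+R_{g*}X^G_{(h,m)}$ for each fixed $m$. Recognizing the latter as precisely the defining identity of a multiplicative vector field on the Lie group $G$ applied to $g\mapsto X^G_{(g,m)}$ finishes (ii) at once.

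For the transitive case (i), the key input is that $\rho_m\colon\g\to T_mM$ is surjective for every $m$, so the chosen complement $(\ker\rho)^0$ maps isomorphically onto $TM$ under $\rho$; hence each $X^M\in\mathfrak{X}^1(M)$ determines a unique $u\in\Gamma((\ker\rho)^0)$ with $\rho(u)=X^M$. Given a multiplicative $X$, I would then \emph{define} $\gamma$ by inverting \eqref{Eqt:XGexpressed}, namely $\gamma(g,m):=R_{g^{-1}*}X^G_{(g,m)}-u_{gm}+\Ad_g u_m$ (using $R_{g^{-1}*}L_{g*}=\Ad_g$). This makes $\gamma$ forced and renders $X\mapsto(X^M,\gamma)$ manifestly injective, with the ansatz \eqref{Eqt:XGexpressed} as a candidate inverse once the target is identified.

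Two things then have to be checked, and this is where the substance lies. First, that $\gamma(g,m)$ lies in $(\ker\rho)_{gm}$, i.e. $\gamma$ is a section of $\sigma^*\ker\rho$: applying $(\sigma_{gm})_{*e}$ to the definition of $\gamma$ and using the composition identity $\sigma_{gm}\circ R_{g^{-1}}=\sigma_m$ together with the equivariance relation $\rho\circ\Ad_g=\sigma_{g*}\circ\rho$ invoked in the proof of Corollary \ref{transitiveform}, the three terms evaluate to $X^M_{gm}-\sigma_{g*}X^M_m$, $X^M_{gm}$ and $\sigma_{g*}X^M_m$ respectively, where the first is supplied by the second condition of Proposition \ref{vvector fields}; with the signs coming from the definition of $\gamma$ they cancel, giving $\rho(\gamma(g,m))|_{gm}=0$. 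Second, that the third condition of Proposition \ref{vvector fields} is equivalent to the cocycle identity $\gamma(hg,m)=\Ad_h\gamma(g,m)+\gamma(h,gm)$. Here I would first observe that the ``canonical part'' $c(g,m):=R_{g*}u_{gm}-L_{g*}u_m$ already satisfies the third condition on its own—a short computation using only that left and right translations commute—so that after substituting $X^G=c+R_{g*}\gamma$ the third condition reduces, upon applying $R_{(hg)^{-1}*}$ and $R_{h^{-1}*}L_{h*}=\Ad_h$, exactly to the stated cocycle identity. Reading these two verifications backwards shows that any pair $(X^M,\gamma)$ fed into \eqref{Eqt:XGexpressed} yields a multiplicative field, establishing the inverse map and hence the isomorphism.

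The main obstacle is bookkeeping rather than conceptual: every manipulation must track the base point at which each translation differential acts (for instance distinguishing $(\sigma_m)_{*g}$ from $(\sigma_{gm})_{*e}$), and the cancellations in both verifications rely on pairing each translation identity with the correct instance of $\rho\circ\Ad_g=\sigma_{g*}\circ\rho$. Once the canonical part $c$ is shown to absorb the third condition, however, both the membership of $\gamma$ in $\ker\rho$ and the cocycle equivalence follow from essentially the same handful of translation identities, so no genuinely new ingredient beyond Proposition \ref{vvector fields} and the equivariance of $\rho$ is required.
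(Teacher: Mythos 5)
Your proof is correct and follows essentially the same route as the paper: both use transitivity to obtain the unique $u\in\Gamma((\ker\rho)^0)$ with $\rho(u)=X^M$, then apply the second condition of Proposition \ref{vvector fields} together with the equivariance $\rho_{gm}\circ\Ad_g=\sigma_{g*}\circ\rho_m$ to conclude that $R_{g^{-1}*}X^G_{(g,m)}-u_{gm}+\Ad_g u_m$ lies in $\ker\rho_{gm}$, which is exactly the decomposition \eqref{Eqt:XGexpressed}. The only difference is completeness rather than method: the paper skips part (ii) and leaves the cocycle equivalence and the converse implicit (``follows in the same way''), whereas you verify them explicitly through the useful observation that the canonical part $R_{g*}u_{gm}-L_{g*}u_m$ already satisfies the third multiplicativity condition on its own.
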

\begin{proof} The proof of Statement (ii) is easy and skipped. We only show (i). 
	
	Let $X=(X^G,X^M)$ be a multiplicative vector field. If $\rho$ is transitive, then there exists a unique element $u\in \Gamma((\ker \rho)^0)$ such that $\rho(u)=X^M$. Due to the second condition in Proposition \ref{vvector fields}, we have
	\begin{eqnarray*}
		\rho_{gm}(R_{g*}^{-1} X^G_{(g,m)})&=& \sigma_{m*}R_{g^*} R_{g^*}^{-1} X^G_{(g,m)}=\rho_{gm} ( u_{gm})-\sigma_{g*}\rho_{m}(u_m)\\ &=&\rho_{gm}(u_{gm}-\Ad_{g} u_m),
	\end{eqnarray*}
	where we have used the fact $\sigma_g\circ \rho_m=\rho_{gm}\circ \Ad_g$.
	So there exists $\mathcal{F}\in C^\infty(G\times M,\ker \rho)$ such that $X^G $ is expressed as in \eqref{Eqt:XGexpressed}. The converse fact follows in the same way. 
\end{proof}

\begin{example} 
	\begin{itemize}
		\item[\rm  (1)]
		For any $fu\in C^\infty(M)\otimes \g$, the elements  \[X^G=(\mathrm{pr_M}^*f)\overleftarrow{u}-(\sigma^*f)\overrightarrow{u},\qquad X^M=-f\rho(u),\]
		define  a multiplicative vector field on $G\triangleright M$. In fact, it is identically  $X=\overleftarrow{fu}-\overrightarrow{fu}$;
		\item[\rm  (2)]
		Let  $Y\in \mathfrak{X}^1(G)$ be a multiplicative vector field  on the Lie group $G$ satisfying that $\sigma_{m*} Y=0$ for all $m\in M$. Then $\tilde{Y}\in \mathfrak{X}^1(G\times M)$ defined by $\tilde{Y}_{(g,m)}=Y_g$  is a multiplicative vector field on  $G\triangleright M$.\end{itemize}
\end{example}

\subsection{Linear action groupoids and quasi-Poisson $2$-groups}
\subsubsection{Multiplicative forms  and vector fields on a linear action groupoid}

Given a linear map of vector spaces $\thetaalgebradegone\xrightarrow{d} \galgebradegzero$, we denote by $d^T: \galgebradegzero^*\to \thetaalgebradegone^*$ the dual map determined by
$$(d^Tg)(u)= -g(du), \quad\forall g\in \galgebradegzero^*,u\in \thetaalgebradegone.$$

%the action Lie groupoid $\galgebradegzero^*\triangleright \thetaalgebradegone^*\rightrightarrows \thetaalgebradegone^*$.
%Regarding the abelian group $G=\galgebradegzero^*$ and base manifold $M=\thetaalgebradegone^*$,  $ \galgebradegzero^*\triangleright \thetaalgebradegone^*\rightrightarrows \thetaalgebradegone^*$ is an action Lie groupoid and further a quasi-Poisson Lie groupoid. 
There is an associated action Lie groupoid   $\galgebradegzero^*\triangleright \thetaalgebradegone^*\rightrightarrows \thetaalgebradegone^*$. Here $\galgebradegzero^*\triangleright \thetaalgebradegone^*$ as a set is the direct product $\galgebradegzero^*\times \thetaalgebradegone^*$. The source map is given by $s:$ $(g,m)\mapsto m$, and the target map $t$ sends $(g,m)$ to $d^T g+m$, for all $(g,m)\in \galgebradegzero^*\triangleright \thetaalgebradegone^*$. For simplicity, we will write $gm$ for $d^T g+m$ from now on.

The groupoid multiplication in $\galgebradegzero^*\triangleright \thetaalgebradegone^*$ is also easy:
\begin{equation}
\label{Eqt:groupoidmultlinearLie2}
(h,gm)(g,m)=(h+g,m), \quad h,g\in \galgebradegzero^*, m\in \thetaalgebradegone^*.\end{equation}

%\textcolor{red}{We prove it directly, not using results in the general setting.}

  First, we  characterize   multiplicative $1$-forms on the   Lie groupoid $\galgebradegzero^*\triangleright \thetaalgebradegone^*$.

\begin{proposition}\label{Prop:easyexampleprop1} Fix a decomposition  $\galgebradegzero=\mathrm{Im} d\oplus (\mathrm{coker}d)$. 	We have an isomorphism
	\[\Omega^1_{\mult}(\galgebradegzero^*\triangleright \thetaalgebradegone^*)\cong C^\infty(\thetaalgebradegone^*,\mathrm{Im} d) \oplus C^\infty(\thetaalgebradegone^*,\mathrm{coker} d)^{\galgebradegzero^*} \oplus C^\infty_{\mult}(\galgebradegzero^*\triangleright \thetaalgebradegone^*,\ker d),\]
	where $C^\infty(\thetaalgebradegone^*,\mathrm{coker} d)^{\galgebradegzero^*}$ stands for $\mathrm{coker} d$-valued functions $f$ on $\thetaalgebradegone^*$ satisfying $f(gm)=f(m)$  and $C^\infty_{\mult}(\galgebradegzero^*\triangleright \thetaalgebradegone^*,\ker d)$ is the space of $\ker d$-valued multiplicative functions $\beta$ on  $ \galgebradegzero^*\triangleright \thetaalgebradegone^*$, i.e., they satisfy 
	\[\beta(h+g,m)=\beta(h,gm)+\beta(g,m).\]
	\end{proposition}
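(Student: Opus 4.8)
The plan is to specialize Proposition \ref{formaction} to the action Lie groupoid $\galgebradegzero^*\triangleright \thetaalgebradegone^*$. Here the structure group is the abelian vector group $G=\galgebradegzero^*$, whose Lie algebra is $\galgebradegzero^*$ itself and whose dual is $\galgebradegzero$. Under the canonical identification $T_m\thetaalgebradegone^*\cong \thetaalgebradegone^*$, the anchor $\rho\colon \galgebradegzero^*\to \mathfrak{X}^1(\thetaalgebradegone^*)$ sends $x$ to the constant vector field with value $d^Tx$, since $\sigma_m\colon g\mapsto d^Tg+m$ is affine with linear part $d^T$; consequently $\rho^*=-d\colon \thetaalgebradegone\to \galgebradegzero$, with $\mathrm{Im}\,\rho^*=\mathrm{Im}\, d$ and $\ker\rho^*=\ker d$. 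Because $G$ is abelian, $\Ad_h^*=\mathrm{id}$. Thus Proposition \ref{formaction} identifies a multiplicative $1$-form $\Theta$ with a pair $(\mu,\Theta^M)$, where $\mu\colon \thetaalgebradegone^*\to \galgebradegzero$ and $\Theta^M\colon \galgebradegzero^*\to \Omega^1(\thetaalgebradegone^*)$ satisfy $\mu_{gm}-\mu_m=\rho^*\Theta^M_{(g,m)}$ together with the cocycle identity $\Theta^M(hg)=\sigma_g^*\Theta^M(h)+\Theta^M(g)$, while the remaining component $\Theta^G_{(g,m)}=\mu_{gm}$ is then determined.

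Next I would peel off the first two summands. Decompose $\mu=\mu_{\mathrm{Im}}+\mu_{\mathrm{coker}}$ according to the fixed splitting $\galgebradegzero=\mathrm{Im}\, d\oplus \mathrm{coker}\, d$. The relation $\mu_{gm}-\mu_m=\rho^*\Theta^M_{(g,m)}\in \mathrm{Im}\, d$ forces the cokernel component to satisfy $\mu_{\mathrm{coker}}(gm)=\mu_{\mathrm{coker}}(m)$, that is $\mu_{\mathrm{coker}}\in C^\infty(\thetaalgebradegone^*,\mathrm{coker}\, d)^{\galgebradegzero^*}$, whereas $\mu_{\mathrm{Im}}\in C^\infty(\thetaalgebradegone^*,\mathrm{Im}\, d)$ is unconstrained. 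These two pieces recover $\mu$, hence $\Theta^G$.

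The third factor records the freedom left in $\Theta^M$ once $\mu$ is fixed. Choosing a linear splitting $j\colon \mathrm{Im}\, d\to \thetaalgebradegone$ of $\rho^*$, I would set the particular solution $(\Theta^M_0)_{(g,m)}:=j(\mu_{gm}-\mu_m)$. Using that translation has trivial derivative, so $(\sigma_g^*\omega)_m=\omega_{gm}$, together with the telescoping $\mu_{(hg)m}-\mu_m=(\mu_{(hg)m}-\mu_{gm})+(\mu_{gm}-\mu_m)$ and the linearity of $j$, one checks that $\Theta^M_0$ already satisfies the cocycle identity. Any admissible $\Theta^M$ differs from $\Theta^M_0$ by a map $\beta\colon \galgebradegzero^*\times\thetaalgebradegone^*\to \ker\rho^*=\ker d$; subtracting the two cocycle identities shows that $\Theta^M$ is a cocycle exactly when $\beta(h+g,m)=\beta(h,gm)+\beta(g,m)$, i.e. $\beta\in C^\infty_{\mult}(\galgebradegzero^*\triangleright\thetaalgebradegone^*,\ker d)$.

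Assembling these observations, the assignment $\Theta\mapsto(\mu_{\mathrm{Im}},\mu_{\mathrm{coker}},\beta)$ is a well-defined linear map, with inverse given by $\mu:=\mu_{\mathrm{Im}}+\mu_{\mathrm{coker}}$, $\Theta^M:=\Theta^M_0+\beta$, and $\Theta^G_{(g,m)}:=\mu_{gm}$; smoothness and linearity in both directions are routine. I expect the main obstacle to be exactly the last step: one must see how the $\ker d$-indeterminacy of $\Theta^M$ interacts with the cocycle condition to produce precisely the multiplicative $\ker d$-valued functions, and the cleanest way to isolate this is to exhibit the linear particular solution $j(\mu_{gm}-\mu_m)$ and verify that it is itself a cocycle, so that the residual datum $\beta$ is governed by the pure multiplicativity condition.
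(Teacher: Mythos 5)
Your proof is correct, and it rests on the same pillar as the paper's own argument: both specialize Proposition~\ref{formaction} to the linear action groupoid, exploiting that $\galgebradegzero^*$ is abelian (so $\Ad^*=\mathrm{id}$ and all translations have identity differential) and that the anchor dualizes to $\rho^*=-d$, so that the three summands come from splitting the datum $(\mu,\Theta^M)$ of Proposition~\ref{formaction} along $\galgebradegzero=\mathrm{Im}\,d\oplus\mathrm{coker}\,d$ and $\ker d\subset\thetaalgebradegone$. The difference is in execution. The paper picks a basis of $\thetaalgebradegone$ adapted to $d$, expands $\Theta$ in coefficient functions $A_i,B_j,C_i,\beta_k$, and solves the multiplicativity equations \eqref{Gpart2'}--\eqref{Mpart'} coordinatewise; you argue invariantly, fixing a splitting $j$ of $\rho^*$ over $\mathrm{Im}\,d$, exhibiting the particular cocycle $(\Theta^M_0)_{(g,m)}=j(\mu_{gm}-\mu_m)$, and identifying the residual (homogeneous) freedom with $\ker d$-valued multiplicative functions. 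The two devices encode exactly the same choice --- a complement of $\ker d$ in $\thetaalgebradegone$; indeed your $j$ sends $du_i$ to $-u_i$, so $\Theta^M_0$ reproduces the paper's terms $C_i(g,m)=\mu_i(m)-\mu_i(gm)$ --- hence neither is more canonical. What yours buys is transparency: the particular-solution-plus-homogeneous-solution structure makes clear \emph{why} the third factor is precisely the multiplicative $\ker d$-valued functions, a point the paper only extracts after the coordinate computation. What the paper's choice buys is reusability: the explicit formulas and the notation $\mu_i,\alpha_j,\beta_k,x^i,u^k$ are recycled verbatim in the analysis of multiplicative vector fields (Equations \eqref{localvf1}--\eqref{localvf2}) and in the later proposition on bi-multiplicative $1$-forms and vector fields. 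One very minor point worth making explicit in your write-up: $\mu$ is canonically recovered from $\Theta$ by $\mu_m=\Theta^G_{(0,m)}$, which is what makes the forward assignment $\Theta\mapsto(\mu_{\mathrm{Im}},\mu_{\mathrm{coker}},\beta)$ well defined before you invoke the inverse construction.
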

	\begin{proof}%This result can be seen as a consequence of Corollary \ref{Cor:regularcase}. We reprove it here by using the local coordinates.
	 Let us take a basis of $\thetaalgebradegone$: $$\{u_1,~\cdots~,u_r,u_{r+1},~\cdots~ u_q\}$$  such that $ du_1,~\cdots~, du_r   $ are linearly independent in $\galgebradegzero$ and $du_{r+1}=\cdots=du_q=0$ where $  q=\mathrm{dim} \thetaalgebradegone$. 
Then $\mathrm{Im} d$ is spanned by $du_i$ ($1\leqslant i \leqslant r$). 
Take the dual basis $$\{u^1,~\cdots~,u^r,u^{r+1},~\cdots~,u^q\}$$ of $\thetaalgebradegone^*$ and extend $\{du_1,~\cdots~, du_r\}$ to a 
basis of $\galgebradegzero$: $$\{x_1:=du_1,~\cdots~, x_r:=du_r, x_{r+1},~\cdots~ x_p\}.$$ Suppose that the corresponding dual basis of $\galgebradegzero^*$ is $$\{x^1,~\cdots~, x^r,x^{r+1},~\cdots~, x^p\}.$$ Here $p=\mathrm{dim} \galgebradegzero$. One can check that $d^Tx^i=-u^i$ for all $i=1,~\cdots~,r$. 
Then a $1$-form $\Theta=(\Theta^{\galgebradegzero^*},{\thetaalgebradegone^*})\in \Omega^1(\galgebradegzero^*\triangleright \thetaalgebradegone^*)$ takes the form
\begin{eqnarray*}
\Theta_{(g,m)}^{\galgebradegzero^*}&=&\sum_{i=1}^r A_i(g,m) du_i+\sum_{j=r+1}^p B_j(g,m) x_j,\\
	\Theta_{(g,m)}^{\thetaalgebradegone^*}&=&\sum_{i=1}^r C_i(g,m)u_i+\sum_{k=r+1}^q\beta_k(g,m) u_k,
\end{eqnarray*}
where $A_i,B_j,C_i,\beta_k\in C^\infty(\galgebradegzero^*\times  \thetaalgebradegone^*)$.

Recall Proposition \ref{formaction} where we considered  multiplicative $1$-forms on a general action Lie groupoid $G\triangleright M\rightrightarrows M$. 	 
For our case, we have $G= \galgebradegzero^*, M=\thetaalgebradegone^*$ and $R_{g*}=L_{h*}=\mathrm{id}, \sigma_{m*}=d^T, \sigma_{g*}=\mathrm{id}$. Applying \eqref{Gpart2'} to our $\Theta^{\galgebradegzero^*}$, we obtain 
\[A_i(hg,m)=A_i(h,gm),\qquad B_j(hg,m)=B_j(h,gm),\]
which implies that \[A_i(g,m)=A_i(0, gm)=:\mu_i(gm),\quad B_j(g,m)=B_j(0,gm)=:\alpha_j(gm), \qquad \forall \mu_i,\alpha_j\in C^\infty(\thetaalgebradegone^*).\] Then applying \eqref{Gpart'} to $(\Theta^{\galgebradegzero^*},
\Theta^{\thetaalgebradegone^*})$, we find
\[A_i(hg,m)=-C_i(h,gm)+A_i(g,m),\qquad B_j(hg,m)=B_j(g,m),\]
which further implies   
\[C_i(h,m)=A_i(0,m)-A_i(h,m)=\mu_i(m)-\mu_i(hm),\qquad \alpha_j(gm)=\alpha_j(m),\]
and thus   $\alpha_j\in C^\infty(\thetaalgebradegone^*)^{\galgebradegzero^*}$. Finally, applying \eqref{Mpart'} to $\Theta^{\thetaalgebradegone^*}$, we have
\[C_i(hg,m)=C_i(h,gm)+C_i(g,m),\qquad \beta_k(hg,m)=\beta_k(h,gm)+\beta_k(g,m).\]
Note that $C_i$ which is determined by $\mu_i$ automatically satisfies the first equation. So we have $\beta_k\in C^\infty_{\mult}(\galgebradegzero^*\triangleright \thetaalgebradegone^*)$. In summary, we have
\[A_i(g,m)=\mu_i(gm),\quad B_j(g,m)=\alpha_j(m),\qquad C_i(g,m)=\mu_i(m)-\mu_i(gm),\quad \beta_k\in C^\infty_{\mult}(\galgebradegzero^*\triangleright \thetaalgebradegone^*),\]
where $\mu_i,\alpha_j\in C^\infty(\thetaalgebradegone^*)$ and $\alpha_j\in C^\infty(\thetaalgebradegone^*)^{\galgebradegzero^*}$. 
Hence,  a $1$-form $\Theta=(\Theta^{\galgebradegzero^*},\Theta^{\thetaalgebradegone^*})\in \Omega^1(\galgebradegzero^*\triangleright \thetaalgebradegone^*)$ is multiplicative  if and only if it can be expressed in the form 
\begin{eqnarray*}
	\Theta_{(g,m)}^{\galgebradegzero^*}&=&\sum_{i=1}^r \mu_i(gm)du_i+\sum_{j=r+1}^p \alpha_j(m) x_j,\\
	\Theta_{(g,m)}^{\thetaalgebradegone^*}&=&\sum_{i=1}^r \big(\mu_i(m)-\mu_i(gm)\big)u_i+\sum_{k=r+1}^q\beta_k(g,m) u_k,
\end{eqnarray*}
where $\mu_i\in C^\infty(\thetaalgebradegone^*)$ and $\alpha_j\in C^\infty(\thetaalgebradegone^*)^{\galgebradegzero^*}$  satisfy $\alpha_j(gm)=\alpha_j(m)$, and $\beta_k\in C^\infty_{\mult}(\galgebradegzero^*\triangleright \thetaalgebradegone^*)$ are multiplicative functions on $\Gpd$. This completes the proof.
\end{proof}

 {Indeed, the    $C^\infty(\thetaalgebradegone^*,\mathrm{Im} d)$-component of  $(\Theta_{(g,m)}^{\galgebradegzero^*},\Theta_{(g,m)}^{\thetaalgebradegone^*})$, namely the sum of those terms related to $\mu_i\in C^\infty(\thetaalgebradegone^*)$, coincides with   the multiplicative form $s^*\gamma-t^*\gamma$, where $\gamma=\sum_{i=1}^{r}\mu_i u_i\in \Omega^1(\thetaalgebradegone^*)$.  And for the $1$-form  $\gamma'=\sum_{k=r+1}^q \nu_k u_k\in \Omega^1(\thetaalgebradegone^*)$, where $u_k\in \ker d$, the multiplicative form $s^*\gamma'-t^*\gamma'$ gives the $\beta_k$-part:
\[s^*\gamma'-t^*\gamma'=\sum_{k=r+1}^q (s^*\nu_k-t^*\nu_k)u_k\in \Omega^1_{\mult}(\galgebradegzero^*\triangleright \thetaalgebradegone^*),\qquad (s^*\nu_k-t^*\nu_k)(g,m)=\nu_k(m)-\nu_k(gm).\]}

\begin{corollary} 
	\begin{itemize}
		\item[\rm (1)] If $d$ is injective, then we have $\Omega^1_{\mult}(\galgebradegzero^*\triangleright \thetaalgebradegone^*)\cong C^\infty(\thetaalgebradegone^*, \mathrm{Im} d)\oplus \mathrm{coker} d$;
		\item[\rm (2)] If $d$ is surjective, then we have $\Omega^1_{\mult}(\galgebradegzero^*\triangleright \thetaalgebradegone^*)\cong C^\infty(\thetaalgebradegone^*,\galgebradegzero) \oplus C^\infty_{\mult}(\galgebradegzero^*\triangleright \thetaalgebradegone^*,\ker d)$;
		\item[\rm (3)] If $d=0$, then we have $\Omega^1_{\mult}(\galgebradegzero^*\triangleright \thetaalgebradegone^*)\cong C^\infty(\thetaalgebradegone^*,\galgebradegzero) \oplus C^\infty(\thetaalgebradegone^*, \galgebradegzero\otimes \thetaalgebradegone)$.
	\end{itemize}

\end{corollary}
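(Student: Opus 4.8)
The plan is to obtain all three cases as immediate specializations of Proposition \ref{Prop:easyexampleprop1}, which presents $\Omega^1_{\mult}(\galgebradegzero^*\triangleright \thetaalgebradegone^*)$ as the direct sum of the three summands $C^\infty(\thetaalgebradegone^*,\mathrm{Im}\, d)$, $C^\infty(\thetaalgebradegone^*,\mathrm{coker}\, d)^{\galgebradegzero^*}$, and $C^\infty_{\mult}(\galgebradegzero^*\triangleright \thetaalgebradegone^*,\ker d)$. For each hypothesis on $d$ I would simply inspect which of these three summands collapses, keeping in mind that the source/target action on $\thetaalgebradegone^*$ is $m\mapsto gm=d^{T}g+m$, so that the orbit of $m$ is $m+\mathrm{Im}(d^{T})$.

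For (1), injectivity of $d$ gives $\ker d=0$, so the third summand vanishes outright. The only point needing a remark is the middle summand: since $d^{T}$ is the transpose of an injective map between finite-dimensional spaces, it is surjective, whence $\mathrm{Im}(d^{T})=\thetaalgebradegone^*$ and the action on $\thetaalgebradegone^*$ is transitive. Consequently any function $f$ obeying $f(gm)=f(m)$ is constant, so $C^\infty(\thetaalgebradegone^*,\mathrm{coker}\, d)^{\galgebradegzero^*}\cong\mathrm{coker}\, d$, yielding the stated isomorphism $C^\infty(\thetaalgebradegone^*,\mathrm{Im}\, d)\oplus\mathrm{coker}\, d$. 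For (2), surjectivity of $d$ gives $\mathrm{coker}\, d=0$ and $\mathrm{Im}\, d=\galgebradegzero$, so the middle summand disappears and the first becomes $C^\infty(\thetaalgebradegone^*,\galgebradegzero)$, leaving the $\ker d$-summand untouched.

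For (3), the hypothesis $d=0$ forces $d^{T}=0$, so $gm=d^{T}g+m=m$ and the action is trivial. Then $\mathrm{Im}\, d=0$ kills the first summand; $\mathrm{coker}\, d=\galgebradegzero$ and the now-vacuous invariance condition turn the middle summand into all of $C^\infty(\thetaalgebradegone^*,\galgebradegzero)$. For the third summand $\ker d=\thetaalgebradegone$, and the multiplicativity relation $\beta(h+g,m)=\beta(h,gm)+\beta(g,m)$ degenerates to additivity $\beta(h+g,m)=\beta(h,m)+\beta(g,m)$ in the first slot. Here I would invoke the elementary fact that a smooth additive map $\galgebradegzero^*\to\thetaalgebradegone$ is automatically linear, so $\beta(\,\cdot\,,m)\in\Hom(\galgebradegzero^*,\thetaalgebradegone)\cong\galgebradegzero\otimes\thetaalgebradegone$, giving the summand $C^\infty(\thetaalgebradegone^*,\galgebradegzero\otimes\thetaalgebradegone)$ and hence the claimed decomposition.

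The argument is essentially bookkeeping against Proposition \ref{Prop:easyexampleprop1}; the only two places that are not pure substitution are the reduction of invariant functions to constants in (1), which rests on the transitivity of the action coming from surjectivity of $d^{T}$, and the identification of smooth additive maps with linear ones in (3). Both are standard, so I anticipate no genuine obstacle beyond recording the identifications $\mathrm{Im}(d^{T})=\thetaalgebradegone^*$ and $\Hom(\galgebradegzero^*,\thetaalgebradegone)\cong\galgebradegzero\otimes\thetaalgebradegone$ carefully.
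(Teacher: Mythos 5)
Your proposal is correct and follows exactly the route the paper intends: the corollary is stated without proof precisely because it is the kind of bookkeeping specialization of Proposition \ref{Prop:easyexampleprop1} that you carry out, and your two non-trivial observations (transitivity of the action when $d^{T}$ is surjective, hence invariant functions are constants; smooth additive maps $\galgebradegzero^*\to\thetaalgebradegone$ being linear when the action is trivial) are the right ones to make the reduction rigorous. No gaps.
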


%sofarhere

%\subsection{}

  Second, we turn to multiplicative vector fields on a linear action groupoid.   {The following fact follows from  Proposition \ref{vvector fields}. }
\begin{proposition}We have an isomorphism  
	\[\mathfrak{X}^1_{\mult}(\galgebradegzero^*\triangleright \thetaalgebradegone^*)=C^\infty(\thetaalgebradegone^*,\mathrm{Im} d^T) \oplus C^\infty(\thetaalgebradegone^*,\mathrm{coker} d^T)^{\galgebradegzero^*} \oplus C^\infty_{\mult}(\galgebradegzero^*\triangleright \thetaalgebradegone^*, \ker d^T),\]
	where $\thetaalgebradegone^*=\mathrm{Im} d^T\oplus \mathrm{coker}d^T$.
\end{proposition}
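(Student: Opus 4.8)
The plan is to specialize Proposition~\ref{vvector fields} to the linear action groupoid $\galgebradegzero^*\triangleright\thetaalgebradegone^*$ and then read off the three summands against two direct-sum decompositions. First I would record the infinitesimal data of this action: since $\galgebradegzero^*$ and $\thetaalgebradegone^*$ are vector groups and $\galgebradegzero^*$ acts by the affine translations $\sigma_g(m)=d^Tg+m$, the left and right translations trivialize, so $L_{h*}=R_{g*}=\mathrm{id}$ and $\sigma_{g*}=\mathrm{id}$, while $\sigma_{m*}=d^T$. Feeding these into the three conditions of Proposition~\ref{vvector fields}, a vector field $X=(X^G,X^M)$, with $X^G\colon\galgebradegzero^*\times\thetaalgebradegone^*\to\galgebradegzero^*$ and $X^M\colon\galgebradegzero^*\times\thetaalgebradegone^*\to\thetaalgebradegone^*$, is multiplicative if and only if $X^M$ is independent of $g$ (hence $X^M\in C^\infty(\thetaalgebradegone^*,\thetaalgebradegone^*)$), the transition relation
\[d^T X^G_{(g,m)}=X^M(d^Tg+m)-X^M(m)\]
holds, and $X^G$ is a multiplicative $\galgebradegzero^*$-valued function, i.e.\ $X^G_{(h+g,m)}=X^G_{(h,gm)}+X^G_{(g,m)}$.

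Next I would split the coefficients along $\galgebradegzero^*=\ker d^T\oplus C$, where $d^T|_C\colon C\xrightarrow{\cong}\mathrm{Im} d^T$, and along $\thetaalgebradegone^*=\mathrm{Im} d^T\oplus\mathrm{coker} d^T$. Writing $X^G=(\beta,X^G_C)$ and $X^M=(X^M_{\mathrm{Im}},X^M_{\mathrm{coker}})$ accordingly, the $\ker d^T$-component $\beta$ is killed by $d^T$, so it is untouched by the transition relation; since the multiplicativity cocycle is linear in values, its $\ker d^T$-projection $\beta$ is again multiplicative, whence $\beta\in C^\infty_{\mult}(\galgebradegzero^*\triangleright\thetaalgebradegone^*,\ker d^T)$, which is the third summand. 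Projecting the transition relation onto $\mathrm{coker} d^T$ forces $X^M_{\mathrm{coker}}(d^Tg+m)=X^M_{\mathrm{coker}}(m)$, that is $X^M_{\mathrm{coker}}\in C^\infty(\thetaalgebradegone^*,\mathrm{coker} d^T)^{\galgebradegzero^*}$, the second summand. The remaining part $X^M_{\mathrm{Im}}\in C^\infty(\thetaalgebradegone^*,\mathrm{Im} d^T)$ carries no further constraint, giving the first summand.

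It then remains to check that the triple $(X^M_{\mathrm{Im}},X^M_{\mathrm{coker}},\beta)$ determines $X$ and, conversely, that any such triple reassembles into a genuine multiplicative vector field. Because $d^T|_C$ is invertible, the transition relation recovers the remaining component uniquely as $X^G_{C,(g,m)}=(d^T|_C)^{-1}\big(X^M_{\mathrm{Im}}(d^Tg+m)-X^M_{\mathrm{Im}}(m)\big)$. The one substantive point is to verify that this prescribed $X^G_C$ is automatically multiplicative; I expect this to be the main (and essentially only) obstacle, but a short telescoping computation using the identity $(h+g)m=h(gm)$ settles it. Everything else is a direct matching of the conditions of Proposition~\ref{vvector fields} against the two decompositions, so I anticipate no real difficulty.
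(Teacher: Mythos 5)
Your proposal is correct and follows essentially the same route as the paper: both specialize Proposition \ref{vvector fields} to the linear action groupoid (where $L_{h*}=R_{g*}=\sigma_{g*}=\mathrm{id}$ and $\sigma_{m*}=d^T$) and then split the resulting three conditions along $\thetaalgebradegone^*=\mathrm{Im}\, d^T\oplus\mathrm{coker}\, d^T$ and $\galgebradegzero^*=\ker d^T\oplus C$. The only difference is presentational: the paper carries out this splitting in the adapted basis from the proof of Proposition \ref{Prop:easyexampleprop1}, arriving at the coordinate expressions \eqref{localvf1}--\eqref{localvf2}, whereas you argue invariantly and make explicit the telescoping verification that the reconstructed $C$-component $X^G_C$ is automatically multiplicative, a point the paper leaves implicit.
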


In fact, if we  continue using the notations introduced in the   proof of Proposition \ref{Prop:easyexampleprop1}, then	a multiplicative vector field $X=(X^{\galgebradegzero^*},X^{\thetaalgebradegone^*})$ can be written in  the form 
\begin{eqnarray}
\label{localvf1} X_{(g,m)}^{\galgebradegzero^*}&=&\sum_{i=1}^r \big(\mu_i(gm)-\mu_i(m)\big) x^i+\sum_{j=r+1}^p \beta_j(g,m) x^j,\\
\label{localvf2} X_{(g,m)}^{\thetaalgebradegone^*}&=&X_m^{\thetaalgebradegone^*}=\sum_{i=1}^r \mu_i(m) d^T x^i+\sum_{k=r+1}^q \alpha_k(m) u^k,
\end{eqnarray}
where $\mu_i\in C^\infty(\thetaalgebradegone^*)$ and $\alpha_k\in C^\infty(\thetaalgebradegone^*)^{\galgebradegzero^*}$ satisfy   $\alpha_k(gm)=\alpha_k(m)$, and $\beta_j\in C^\infty_{\mult}(\galgebradegzero^*\triangleright \thetaalgebradegone^*)$  are multiplicative functions on  $\galgebradegzero^*\triangleright \thetaalgebradegone^*$.

 {Further, we see that the   $C^\infty(\thetaalgebradegone^*,\mathrm{Im} d^T)$-part
 	of a multiplicative vector field $(X_{(g,m)}^{\galgebradegzero^*},X_{(g,m)}^{\thetaalgebradegone^*})$, namely the sum of terms related to
 	 $\mu_i$, is given by $\overrightarrow{e}-\overleftarrow{e}$ for $e=\sum_{i=1}^r\mu_i x^i\in \Gamma(\galgebradegzero^*\triangleright \thetaalgebradegone^*)$, a section of the Lie algebroid. While for $e'=\sum_{j=r+1}^p\nu_j x^j\in \Gamma(\galgebradegzero^*\triangleright \thetaalgebradegone^*)$ with $d^Tx^j=0$, the multiplicative vector field $\overrightarrow{e'}-\overleftarrow{e'}$ is expressed as 
\[\overrightarrow{e'}-\overleftarrow{e'}=\sum_{j={r+1}}^p(t^*\nu_j-s^*\nu_j)x^j,\qquad (t^*\nu_j-s^*\nu_j)(g,m)=\nu_j(gm)-\nu_j(m),\]}
which belongs to $C^\infty_{\mult}(\galgebradegzero^*\triangleright \thetaalgebradegone^*, \ker d^T)$.

In addition, we have the following facts:

\begin{corollary} 
	\begin{itemize}
		\item[\rm (1)] If $d$ is injective, then we have $\mathfrak{X}^1_{\mult}(\galgebradegzero^*\triangleright \thetaalgebradegone^*)\cong C^\infty(\thetaalgebradegone^*,\thetaalgebradegone^*) \oplus C^\infty_{\mult}(\galgebradegzero^*\triangleright \thetaalgebradegone^*,\ker d^T)$;
		\item[\rm (2)] If $d$ is surjective, then we have $\mathfrak{X}^1_{\mult}(\galgebradegzero^*\triangleright \thetaalgebradegone^*)\cong C^\infty(\thetaalgebradegone^*,\mathrm{Im} d^T) \oplus C^\infty(\thetaalgebradegone^*,\mathrm{coker} d^T)^{\galgebradegzero^*}$;
		\item[\rm (3)] If $d=0$, then we have $\mathfrak{X}^1_{\mult}(\galgebradegzero^*\triangleright \thetaalgebradegone^*)\cong C^\infty(\thetaalgebradegone^*,\thetaalgebradegone^*) \oplus  C^\infty(\thetaalgebradegone^*, \galgebradegzero\otimes \galgebradegzero^*)$.
	\end{itemize}

\end{corollary}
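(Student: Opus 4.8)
The plan is to read off all three statements as immediate specializations of the general decomposition
\[\mathfrak{X}^1_{\mult}(\galgebradegzero^*\triangleright \thetaalgebradegone^*)\cong C^\infty(\thetaalgebradegone^*,\mathrm{Im} d^T) \oplus C^\infty(\thetaalgebradegone^*,\mathrm{coker} d^T)^{\galgebradegzero^*} \oplus C^\infty_{\mult}(\galgebradegzero^*\triangleright \thetaalgebradegone^*, \ker d^T)\]
furnished by the preceding Proposition. The only inputs I would need are the standard duality dictionary between $d$ and its transpose $d^T$: since $\ker d^T=(\mathrm{Im}\, d)^{0}$ and $\mathrm{Im}\, d^T=(\ker d)^{0}$, the map $d$ is injective exactly when $d^T$ is surjective, $d$ is surjective exactly when $d^T$ is injective, and $d=0$ exactly when $d^T=0$. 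With these facts the whole proof reduces to tracking which of the three summands survive under each hypothesis.

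For (1), injectivity of $d$ forces $d^T$ to be surjective, so $\mathrm{Im}\, d^T=\thetaalgebradegone^*$ and $\mathrm{coker}\, d^T=0$; the first summand becomes $C^\infty(\thetaalgebradegone^*,\thetaalgebradegone^*)$, the second vanishes, and the third $C^\infty_{\mult}(\galgebradegzero^*\triangleright \thetaalgebradegone^*,\ker d^T)$ is left untouched (note $\ker d^T=(\mathrm{Im}\, d)^{0}$ is generically nonzero), giving the claimed isomorphism. For (2), surjectivity of $d$ makes $d^T$ injective, so $\ker d^T=0$ and the third summand vanishes, yielding $C^\infty(\thetaalgebradegone^*,\mathrm{Im} d^T) \oplus C^\infty(\thetaalgebradegone^*,\mathrm{coker} d^T)^{\galgebradegzero^*}$ verbatim. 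Both cases are purely bookkeeping once the duality facts are in hand.

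Case (3) is where a genuine (if mild) argument is needed, and I expect it to be the main obstacle. When $d=0$ we have $d^T=0$, so $\mathrm{Im}\, d^T=0$ kills the first summand while $\mathrm{coker}\, d^T=\thetaalgebradegone^*$ and $\ker d^T=\galgebradegzero^*$. The key observation is that $d=0$ makes the target map $t(g,m)=d^Tg+m=m$, i.e. the $\galgebradegzero^*$-action is trivial; hence the invariance condition on the second summand is vacuous and $C^\infty(\thetaalgebradegone^*,\mathrm{coker} d^T)^{\galgebradegzero^*}=C^\infty(\thetaalgebradegone^*,\thetaalgebradegone^*)$. For the third summand I would unwind the multiplicativity condition $\beta(h+g,m)=\beta(h,gm)+\beta(g,m)=\beta(h,m)+\beta(g,m)$: for each fixed $m$ this says $\beta(\cdot,m)\colon \galgebradegzero^*\to\galgebradegzero^*$ is a smooth additive map, hence linear, so it is an element of $\mathrm{Hom}(\galgebradegzero^*,\galgebradegzero^*)\cong \galgebradegzero\otimes\galgebradegzero^*$; allowing smooth dependence on $m$ then gives $C^\infty_{\mult}(\galgebradegzero^*\triangleright \thetaalgebradegone^*,\galgebradegzero^*)\cong C^\infty(\thetaalgebradegone^*,\galgebradegzero\otimes\galgebradegzero^*)$. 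Assembling the surviving summands produces exactly $C^\infty(\thetaalgebradegone^*,\thetaalgebradegone^*) \oplus C^\infty(\thetaalgebradegone^*,\galgebradegzero\otimes\galgebradegzero^*)$, completing (3).
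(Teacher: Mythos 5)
Your proposal is correct and is exactly the argument the paper intends: the corollary is stated as an immediate specialization of the preceding Proposition's three-summand decomposition, using the standard duality facts ($d$ injective $\Leftrightarrow$ $d^T$ surjective, $d$ surjective $\Leftrightarrow$ $d^T$ injective, $\ker d^T=(\mathrm{Im}\,d)^0$, $\mathrm{Im}\,d^T=(\ker d)^0$), with the trivial-action observation and the additivity-implies-linearity step handling the case $d=0$, consistent with the paper's earlier identification $\mathfrak{X}^1_{\mult}(V)=\Hom(V,V)$ for an abelian group $V$. No gaps.
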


\subsubsection{Application to linear quasi-Poisson  $2$-groups} %as a toy model}

If the   $2$-term complex $\thetaalgebradegone\xrightarrow{d} \galgebradegzero$ we mentioned happens to come from a   Lie  $2$-algebra $(\thetaalgebradegone\xrightarrow{d} \galgebradegzero,[~\cdot~,~\cdot~]_2,[~\cdot~,~\cdot~,~\cdot~]_3)$,  then the action Lie groupoid	$ \galgebradegzero^*\triangleright \thetaalgebradegone^*\rightrightarrows \thetaalgebradegone^*$ can be enhanced to   a quasi-Poisson Lie groupoid   with the bivector field $P$ and the $3$-section $\Phi$  defined below:
\[P=[~\cdot~,~\cdot~]_2\in \wedge^2\galgebradegzero^* \otimes \galgebradegzero \oplus \galgebradegzero^*\wedge \thetaalgebradegone^*\otimes \thetaalgebradegone\oplus \wedge^2 \thetaalgebradegone^*\otimes \thetaalgebradegone,\qquad \Phi=[~\cdot~,~\cdot~,~\cdot~]_3\in \wedge^3 \galgebradegzero^*\otimes \thetaalgebradegone.\]
For details, see \cites{CSX, LSX}. 

Making use of Theorem \ref{Thm:case1} and Proposition \ref{Lie2homo}, we obtain two Lie  $2$-algebras and a weak Lie  $2$-algebra  morphism shown as in the following diagram:
\begin{equation}\label{caselie2}
\xymatrix{
	C^\infty(\thetaalgebradegone^*,\thetaalgebradegone)\ar@{^{}->}[d]_{d} \ar[r]^{p^\sharp} & C^\infty(\thetaalgebradegone^*,\galgebradegzero^*) \ar@{^{}->}[d]^{T} \\
	\Omega^1_{\mult}(\galgebradegzero^*\triangleright \thetaalgebradegone^*)\ar[r]^{ P^\sharp}\ar@{.>}[ru]^{\nu}  & \mathfrak{X}^1_{\mult}(\galgebradegzero^*\triangleright \thetaalgebradegone^*)}.
\end{equation}
where $\nu$ is defined as in Equation \eqref{nnu}. 
	
The  two Lie  $2$-algebras that appear in this diagram are both of infinite dimensions. We shall find two  finite dimensional sub Lie  $2$-algebras. Note that the quasi-Poisson Lie groupoid $ \galgebradegzero^*\triangleright \thetaalgebradegone^*\rightrightarrows \thetaalgebradegone^*$ is actually a   \textbf{quasi-Poisson $2$-group} whose Lie  $2$-bialgebra is $(\g^*,\g)$, where the Lie  $2$-algebra structure on $\g^*$ is trivial  \cite{CSX}. So the Lie group structure on $ \galgebradegzero^*\triangleright \thetaalgebradegone^*$ is indeed abelian, namely,
	\[ {(g,m)~\cdot~ (h,n)=(g+h,m+n)},\qquad \forall g,h\in \galgebradegzero^*,m,n\in \thetaalgebradegone^*.\]

	 %and the (abelian) Lie group structure on $\galgebradegzero^*$ is the addition.
	 %\begin{example}\label{coad2}
	 %Let $\g=(\thetaalgebradegone\xrightarrow{d} \galgebradegzero,l_2,l_3)$ be a Lie  $2$-algebra. Consider the quasi-Poisson  $2$-group $ \galgebradegzero^*\triangleright \thetaalgebradegone^* \rightrightarrows \thetaalgebradegone^*$.
         %Then $\Gpd:=\galgebradegzero^*\times \thetaalgebradegone^*\rightrightarrows \thetaalgebradegone^*$ is an action Lie groupoid and further a quasi-Poisson Lie groupoid. It is moreover a  quasi-Poisson  $2$-group whose Lie  $2$-bialgebra is $(\g^*,\g)$, where the Lie  $2$-algebra structure on $\g^*$ is trivial. (See \cite{CSX}).
		
		By saying a \textit{bi-multiplicative form}  on $\galgebradegzero^*\triangleright \thetaalgebradegone^*$, we mean a differential form  (of any degree)   that are multiplicative with respect to both the groupoid and group structures of the Lie $2$-group $\galgebradegzero^*\triangleright \thetaalgebradegone^*$. The notation of the space of bi-multiplicative forms is $\Omega^\bullet_{\bmult}(\galgebradegzero^*\triangleright \thetaalgebradegone^*)$. Similarly, we use $\mathfrak{X}^\bullet_{\bmult}(\galgebradegzero^*\triangleright \thetaalgebradegone^*)$ 
		to denote the space of \textit{bi-multiplicative vector fields} on on $\galgebradegzero^*\triangleright \thetaalgebradegone^*$ which are multiplicative with respect to both the groupoid and group structures.
	Indeed, one can give more concrete characterizations of these spaces. Our last proposition gives an illustration of  the $\bullet=1$ case.
		
		\begin{proposition}For bi-multiplicative $1$-forms and vector fields, we have
			\[\Omega^1_{\bmult}(\galgebradegzero^*\triangleright \thetaalgebradegone^*)=\galgebradegzero,\] \[\mbox{and}\qquad  
		\mathfrak{X}^1_{\bmult}(\galgebradegzero^*\triangleright \thetaalgebradegone^*)=\End_0(\g^*):=\{(A,B)\in \End(\galgebradegzero^*)\oplus \End(\thetaalgebradegone^*)|d^T \circ A=B\circ d^T\}.\]
		\end{proposition}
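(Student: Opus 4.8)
The plan is to realize both $\Omega^1_{\bmult}$ and $\mathfrak{X}^1_{\bmult}$ as the intersection of the \emph{groupoid}-multiplicative forms (resp.\ vector fields), which are already described by Propositions \ref{formaction} and \ref{vvector fields}, with the \emph{group}-multiplicative ones. The crucial simplification is that, as recalled just above, the underlying Lie group of the Lie $2$-group $\galgebradegzero^*\triangleright\thetaalgebradegone^*$ is the abelian group of the vector space $V:=\galgebradegzero^*\oplus\thetaalgebradegone^*$. Hence Example \ref{abeliangroup} governs the group structure: a group-multiplicative $1$-form is exactly a \emph{constant} (constant-coefficient) form, i.e.\ an element of $V^*=\galgebradegzero\oplus\thetaalgebradegone$, and a group-multiplicative vector field is exactly a \emph{linear} vector field, i.e.\ an element of $\Hom(V,V)=\End(V)$. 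So in each case I would start from this finite-dimensional space and carve out the subspace that is in addition groupoid-multiplicative.

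For the $1$-forms, I would take a constant form, written through the natural splitting as $\Theta=(\Theta^{\galgebradegzero^*},\Theta^{\thetaalgebradegone^*})$ with $\Theta^{\galgebradegzero^*}\equiv a\in\galgebradegzero$ and $\Theta^{\thetaalgebradegone^*}\equiv b\in\thetaalgebradegone$, and feed it into Proposition \ref{formaction}. Here all translations are trivial and $\sigma_g=d^T(\cdot)+\mathrm{id}$, so $R_{g^{-1}}^*=\mathrm{id}$, $\Ad_h^*=\mathrm{id}$ and $\sigma_g^*=\mathrm{id}$. The equation $\Theta^{\galgebradegzero^*}_{(g,m)}=R_{g^{-1}}^*\mu_{gm}$ forces $\mu\equiv a$ to be constant; the cocycle equation $\Theta^{\thetaalgebradegone^*}(hg)=\sigma_g^*\Theta^{\thetaalgebradegone^*}(h)+\Theta^{\thetaalgebradegone^*}(g)$ becomes $b=b+b$, hence $b=0$; and the remaining equation $\Ad_h^*\mu_{hm}-\mu_m=\rho^*\Theta^{\thetaalgebradegone^*}_{(h,m)}$ then reads $0=0$. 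Conversely every $a\in\galgebradegzero$ (with $\mu\equiv a$ and $\Theta^{\thetaalgebradegone^*}=0$) satisfies all three equations and is visibly a constant form, so it is bi-multiplicative. This yields $\Omega^1_{\bmult}(\galgebradegzero^*\triangleright\thetaalgebradegone^*)=\galgebradegzero$.

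For the vector fields, I would write a linear field as $X_{(g,m)}=(Ag+Cm,\,Dg+Bm)$ with blocks $A\in\End(\galgebradegzero^*)$, $B\in\End(\thetaalgebradegone^*)$, $C:\thetaalgebradegone^*\to\galgebradegzero^*$ and $D:\galgebradegzero^*\to\thetaalgebradegone^*$, and impose the three conditions of Proposition \ref{vvector fields}, again using $\sigma_{m*}=d^T$, $\sigma_{g*}=\mathrm{id}$, $L_{h*}=R_{g*}=\mathrm{id}$ and $gm=d^Tg+m$. The condition that $X^M$ be independent of $g$ gives $D=0$; the translation-compatibility $X^G_{(hg,m)}=X^G_{(g,m)}+X^G_{(h,gm)}$ gives $Cm+Cd^Tg=0$ for all $g,m$, whence $C=0$; and the middle condition $\sigma_{m*}X^G_{(g,m)}=X^M_{gm}-\sigma_{g*}X^M_m$ then collapses to $d^TAg=Bd^Tg$ for all $g$, i.e.\ $d^T\circ A=B\circ d^T$. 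Conversely any such pair $(A,B)$ produces the bi-multiplicative field $X_{(g,m)}=(Ag,Bm)$. This identifies $\mathfrak{X}^1_{\bmult}(\galgebradegzero^*\triangleright\thetaalgebradegone^*)$ with $\End_0(\g^*)$.

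Both computations are elementary once the two multiplicativity conditions are separated, so I do not anticipate a genuine obstacle; the one point that requires care — and the step I would watch most closely — is that one must invoke \emph{all} of the structure equations of Propositions \ref{formaction} and \ref{vvector fields}, and in particular the cocycle/translation equation. It is precisely that equation, rather than the two more obvious ones, which eliminates the spurious $\ker d$ (resp.\ $\ker d^T$) directions and forces the clean answers $\galgebradegzero$ and $\End_0(\g^*)$; omitting it would leave an apparently larger space.
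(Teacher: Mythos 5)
Your proposal is correct, and its overall strategy coincides with the paper's: both compute $\Omega^1_{\bmult}$ and $\mathfrak{X}^1_{\bmult}$ as the intersection of the group-multiplicative objects (constant $1$-forms in $\galgebradegzero\oplus\thetaalgebradegone$, resp.\ linear vector fields in $\End(\galgebradegzero^*\oplus\thetaalgebradegone^*)$, both supplied by Example \ref{abeliangroup}) with the groupoid-multiplicative ones. The difference lies in how groupoid-multiplicativity is imposed. The paper reuses its specialized classifications for the linear action groupoid --- Proposition \ref{Prop:easyexampleprop1} for $1$-forms and the formulas \eqref{localvf1}--\eqref{localvf2} for vector fields --- observing that constancy/linearity forces the coefficient functions $\mu_i,\alpha_k,\beta_j$ to be linear, and then reads off $C=0$, $D=0$ and $d^T\circ A=B\circ d^T$ by evaluating at special points. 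You instead feed the constant/linear ansatz directly into the general action-groupoid criteria, Propositions \ref{formaction} and \ref{vvector fields}, and solve the resulting block equations. Your route is more elementary and basis-free: it bypasses the choice of bases and the decompositions into $\mathrm{Im}\, d$, $\mathrm{coker}\, d$, $\ker d$ underlying Proposition \ref{Prop:easyexampleprop1}, at the cost of redoing a short computation the paper has already packaged; the paper's route is shorter given its machinery and makes visible exactly which pieces of the infinite-dimensional classification (the $C^\infty$-coefficients and the $\ker d$-, $\ker d^T$-directions) collapse under the linearity constraint. Your closing observation also matches the paper's proof in spirit: it is the cocycle/translation equation (equivalently, multiplicativity of the $\beta$-components in the paper's formulas) that eliminates the $\thetaalgebradegone$-component of a constant form and the off-diagonal blocks $C$, $D$ of a linear vector field, while the remaining equations only produce the compatibility $d^T\circ A=B\circ d^T$.
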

                 \begin{proof}
		The space of multiplicative $1$-forms on the abelian Lie group $\galgebradegzero^*\triangleright \thetaalgebradegone^*$ coincides with $\galgebradegzero\oplus \thetaalgebradegone$. According to Proposition \ref{Prop:easyexampleprop1}, we have $\Omega^1_{\bmult}(\galgebradegzero^*\triangleright \thetaalgebradegone^*)=\galgebradegzero$.	
		
		 By Example \ref{abeliangroup}, a vector field $X\in \mathfrak{X}^1(\galgebradegzero^*\triangleright \thetaalgebradegone^*)$ is multiplicative  with respect to the abelian group structure on $\galgebradegzero^*\oplus\thetaalgebradegone^*$ if and only if it is of the form
		        \[X=\begin{pmatrix} A & C \\ D& B\end{pmatrix}\in \End(\galgebradegzero^*\oplus\thetaalgebradegone^*),\qquad A\in \End(\galgebradegzero^*), B\in \End(\thetaalgebradegone^*), C\in \Hom(\thetaalgebradegone^*, \galgebradegzero^*), D\in \Hom(\galgebradegzero^*,\thetaalgebradegone^*).\]
			If $X$ is further multiplicative regarding the groupoid structure, then it takes the form in \eqref{localvf1} and \eqref{localvf2}.
			So the functions $\mu_i\in C^\infty(\thetaalgebradegone^*),\alpha_k\in C^\infty(\thetaalgebradegone^*)^{\galgebradegzero^*},\beta_j\in C^\infty_{\mult}(\galgebradegzero^*\triangleright \thetaalgebradegone^*)$ in the two formulas are indeed  linear functions, and hence  
			\[\mu_i,\alpha_k \in \thetaalgebradegone,\qquad   d\alpha_k=0,\qquad \beta_j\in \galgebradegzero.\]
			Then Equations \eqref{localvf1} and \eqref{localvf2} turn to
			\[X^{\galgebradegzero^*}_{(g,m)}=\sum_{i=1}^r \mu_i(d^T g) x^i+\sum_{j=r+1}^p \beta_j(g) x^j,\qquad X^{\thetaalgebradegone^*}_{(g,m)}=\sum_{i=1}^r \mu_i(m) d^Tx^i+\sum_{k=r+1}^q \alpha_k(m) u^k.\]
			(Here $x^i$ and $u^k$ are as in the proof of Proposition \ref{Prop:easyexampleprop1}.)
			From this fact we see that
			\[X_{(0,m)}^{\thetaalgebradegone^*}=B_m+C_m\in \thetaalgebradegone^*,\qquad X_{(g,0)}^{\galgebradegzero^*}=A_g+D_g\in \galgebradegzero^*,\qquad d^TX^{\galgebradegzero^*}_{(g,d^Tg)}=X^{\thetaalgebradegone^*}_{(g,d^Tg)},\]
			which implies that $C=0, D=0$ and $d^T \circ A=B\circ d^T$.
		\end{proof}

	             The Lie  $2$-algebra $\Omega^1_{\mult}(\thetaalgebradegone^*)\to \Omega^1_{\bmult}(\galgebradegzero^*\triangleright \thetaalgebradegone^*)$ is actually the original Lie  $2$-algebra $\thetaalgebradegone\to \galgebradegzero$, where we only consider   multiplicative $1$-forms on the abelian Lie group $\thetaalgebradegone^*$. Further, restricting on  linear sections of the Lie algebroid $\galgebradegzero^*\triangleright \thetaalgebradegone^*\to \thetaalgebradegone^*$, the  Lie  $2$-algebra $\Gamma_{\mathrm{linear}}(\galgebradegzero^*\triangleright \thetaalgebradegone^*)\to \mathfrak{X}^1_{\bmult}(\galgebradegzero^*\triangleright \thetaalgebradegone^*)$ turns out to match with $\End(\g^*)$, i.e., it is of the form: 
		\[\Hom(\thetaalgebradegone^*,\galgebradegzero^*)\xrightarrow{T} \End_0(\g^*),\qquad T(D)=(D\circ d^*,d^*\circ D).\]
		    Moreover, the Lie  $2$-algebra morphism in \eqref{caselie2} becomes the the coadjoint action $(\ad_0^*,\ad^*_1,\ad^*_2)$ of the  Lie  $2$-algebra $\g$ on its dual $\g^*$: 
		\begin{equation*}
		\xymatrix{
			\thetaalgebradegone\ar@{^{}->}[d]_{d} \ar[rr]^{\ad^*_1} && \Hom(\thetaalgebradegone^*,\galgebradegzero^*)  \ar@{^{}->}[d]^{T} \\
			\galgebradegzero\ar[rr]^{\ad_0^*}\ar@{.>}[rru]^{\nu} &&  \End_0( \g^*)},
		\end{equation*}
		where   $\nu:\wedge^2\galgebradegzero\to \Hom(\thetaalgebradegone^*,\galgebradegzero^*)$ is given by
		\[\nu(x,y)=-[x,y,~\cdot~]_3^*,\qquad \forall x,y\in \galgebradegzero.\]
		This is a Lie  $2$-algebra version of Diagram \eqref{coad}.
 	
 	  \section{Infinitesimal multiplicative (IM) $1$-forms on a quasi-Lie bialgebroid}
 	  \label{Sec:IM1forms}

 	  \subsection{IM  $1$-forms of a Lie algebroid}  
 	  Let $A$ be a Lie algebroid over $M$.  	  
 	  Recall from \cite{BC} that an {\bf  IM $1$-form} of the Lie algebroid $A$ is defined to be  	  
 	  a pair $(\nu,\theta)$ where $\nu:A\to T ^*M$ is a morphism of vector bundles, $\theta \in  \Gamma(A^*)$, and the following conditions are satisfied:
 	  \begin{eqnarray}
 	  \label{IM1}\theta[x,y]&=&\rho(x)\theta(y)-\rho(y)\theta(x)-\langle \rho(y),\nu(x)\rangle,  \\ 
 	  \label{IM2}\nu[x,y]&=&L_{\rho(x)} \nu(y)-\iota_{\rho(y)} d\nu(x),
 	  \end{eqnarray}
 	  for all $x,y\in \Gamma(A)$.  {Equation \eqref{IM1} is also formulated as $   (d_A \theta)(x,y)=\langle \rho(y),\nu(x)\rangle  $ where $d_A: \Gamma(A^*)\to \Gamma(\wedge^2 A^*)$ is  the differential associated with the Lie algebroid structure of $A$.} 
 	  
 	  Denote by $\mathrm{IM}^1(A)$ the set of IM $1$-forms. Indeed, for any $k\geqslant 0$, there is also the notion of IM $k$-forms on $A$ forming the set $\mathrm{IM}^k(A)$. For details, see \cite{BC}.
 	  
 	  To any $\gamma\in \Omega^1(M)$ is associated a pair $(\iota_{\rho(\cdot)} d\gamma,\rho^*\gamma)$, which is an example of IM $1$-form of $A$.
 	  
 	  	  We also recall an important fact \cite{BC}*{Theorem 2}. Let $\Gpd$ be a source-simply-connected Lie groupoid over $M$ with Lie algebroid $A\to M$. There exists a one-to-one correspondence between multiplicative $1$-forms on $\Gpd$ and IM $1$-forms. To be specific,   $\alpha\in \Omega^1_{\mult}(\Gpd)$  corresponds to $\sigma(\alpha):=(\nu,\theta)\in \mathrm{IM}^1(A)$ defined by
 	  	\begin{eqnarray}\label{sigma1}
 	  	 \langle\nu(x),U\rangle&=&d\alpha (x,U), \\\label{sigma2}
 	  	 \mbox{ and }\quad \theta(x)&=&\alpha(x)
 	  	 \end{eqnarray} 	  	
 	  	for $x\in \sections{A}$ and $U\in \mathfrak{X}^1(M)$. More generally, one has $\Omega^k_{\mult}(\Gpd)\cong \mathrm{IM}^k(A)$.
 	  	
 	   Now let $(\Gpd,P,\Phi)$ be a quasi-Poisson groupoid. By Theorem \ref{Thm:case1}, we have a   weak Lie $2$-algebra 
 	   $\Omega^1(M)\xrightarrow{J}\Omega^1_{\mult}(\mathcal{G}) $. Hence, if $\Gpd$ is source-simply-connected, then  $\Omega^1_{\mult}(\mathcal{G})$ can be identified with $\mathrm{IM}^1(A)$ and we also have a weak Lie $2$-algebra 
 	   $\Omega^1(M)\xrightarrow{j}  \mathrm{IM}^1(A)$. 
 	   
 	   Since quasi-Lie bialgebroids are infinitesimal replacements of quasi-Poisson groupoids \cite{ILX}, it is   natural to expect that  a weak Lie $2$-algebra $\Omega^1(M)\xrightarrow{j} \mathrm{IM}^1(A)$ is directly associated with a quasi-Lie bialgebroid $(A,\dstar ,\Phi)$.   
 	   In what follows, we demonstrate this fact. It is  worth noting that the results presented can be extended to the graded space of all degree IM forms $\mathrm{IM}^\bullet(A)$ of a quasi-Lie bialgebroid $A$, although for brevity, we limit our consideration to IM $1$-forms.

 \subsection{The weak Lie $2$-algebra of IM $1$-forms on a quasi-Lie bialgebroid}
 We start with recalling the definition of a quasi-Lie bialgebroid.
  \begin{definition}\cite{R} 
  A quasi-Lie bialgebroid is a triple  $(A,\dstar ,\Phi)$  consisting of a Lie algebroid $A$ (over the base manifold $M$), a section $\Phi\in \Gamma(\wedge^3 A)$, and an operator $\dstar :\Gamma(\wedge^\bullet A )\to \Gamma(\wedge^{\bullet+1} A )$ satisfying the following conditions
  \begin{itemize}
  	\item $\dstar $ is a derivation of degree $1$, i.e.,
  	$$\dstar (x \wedge y )=\dstar  x \wedge y  + (-1)^k x\wedge \dstar  y,\quad \forall  x \in \Gamma(\wedge^k A ),  y \in \Gamma(\wedge^\bullet A );$$
  	\item $\dstar $ is a derivation of the Schouten bracket, i.e.,
  	$$\dstar[x,y]=[\dstar x, y]+(-1)^{k-1}[x,\dstar y],\quad \forall  x \in \Gamma(\wedge^k A ),  y \in \Gamma(\wedge^\bullet A );$$
  	\item The square of $\dstar $ is controlled by $\Phi$ in the sense that $\dstar^2 =-[\Phi,\tobefilledin]$, as a map $\Gamma(\wedge^\bullet A )\to \Gamma(\wedge^{\bullet+2} A )$ and $d_*\Phi=0$.
  \end{itemize}
  \end{definition}
   {The operator $\dstar$ in a quasi-Lie bialgebroid gives rise to an anchor map $\rho_*:A^*\to TM$ and a bracket $[\cdot,\cdot]_*$ on $\Gamma(A^*)$ defined as follows:}
  \begin{eqnarray*}
  \rho_*(\xi)f&=&\langle d_*f, \xi\rangle;\\
  \langle [\xi,\xi']_*,x\rangle&=&\rho_*(\xi)\langle \xi',x\rangle-\rho_*(\xi')\langle \xi,x\rangle-\langle d_*x,\xi\wedge \xi'\rangle,
  \end{eqnarray*}
  for all $f\in C^\infty(M), x\in \Gamma(A)$ and $\xi,\xi'\in \Gamma(A^*)$. But note that $(A^*,[\cdot,\cdot]_*,\rho_*)$ does not form a Lie algebroid. 
  
Stemming from a quasi-Lie bialgebroid, we have an associated weak Lie $2$-algebra underlying IM $1$-forms; our main theorem below gives the details of this construction.
 \begin{theorem}\label{Thm:quasiLiebitoIM2algebra}Let $(A,\dstar ,\Phi)$ be a quasi-Lie bialgebroid as defined above. With the following structure maps, the 2-term complex  
 	\[ \Omega^1(M)\xrightarrow{j} \mathrm{IM}^1(A),\qquad j(\gamma)=(-\iota_{\rho(~\cdot~)} d\gamma, -\rho^*\gamma)\] 
 	 composes a weak Lie $2$-algebra.
 	 \begin{itemize}
 	 	\item The skew-symmetric bracket on $\mathrm{IM}^1(A)$ is defined by
 	 	\begin{eqnarray}\nonumber%\label{bracket}
 	 	[(\nu,\theta),(\nu', \theta')] &=&\nonumber \big(\nu\circ \rho_{*}^*\circ \nu'-\nu'\circ \rho_{*}^*\circ \nu+L_{(\rho_{*} \theta)}\nu'(\cdot)-\nu'(L_\theta(\cdot))-L_{(\rho_{*} \theta')} \nu(\cdot)+\nu(L_{\theta'}(\cdot)),\\ &&\label{bracket}[\theta,\theta']_{*}\big). 
 	 	\end{eqnarray}
 	% for all	$(\nu,\theta),(\nu',\theta')\in \mathrm{IM}^1(A)$.
 	 	\item The action of $\mathrm{IM}^1(A)$ on $\Omega^1(M)$ is defined by 
 	 	\[(\nu,\theta)\triangleright \gamma=\nu(\rho_{*}^*\gamma)+L_{\rho_* \theta} \gamma.\]
 	 	\item The $3$-bracket $[\cdot,\cdot,\cdot]_3:~ \otimes_{\mathbb{R}}^3 (\mathrm{IM}^1(A))\to \Omega^1(M)$ is defined by
 	 	\[[(\nu_1,\theta_1),(\nu_2,\theta_2),(\nu_3,\theta_3)]_3=d\Phi(\theta_1,\theta_2,\theta_3)+\nu_1(\Phi(\theta_2,\theta_3))+\nu_2(\Phi(\theta_3,\theta_1))+\nu_3(\Phi(\theta_1,\theta_2)).\]
 	 \end{itemize}
 \end{theorem}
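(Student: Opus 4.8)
The plan is to obtain the weak Lie $2$-algebra structure on $\Omega^1(M)\xrightarrow{j}\mathrm{IM}^1(A)$ by transporting it through the infinitesimal--global correspondence from Theorem \ref{Thm:case1}, while keeping in reserve a fully intrinsic verification of the axioms \eqref{lie21}--\eqref{lie25} that makes no reference to any groupoid. Both routes share the same preliminary step: checking that the three structure maps are well defined.

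First I would establish well-definedness. The action $\triangleright$ and the $3$-bracket $[\cdot,\cdot,\cdot]_3$ take values in $\Omega^1(M)$ by inspection, and $j(\gamma)$ lies in $\mathrm{IM}^1(A)$ because $(\iota_{\rho(\cdot)}d\gamma,\rho^*\gamma)$ is an IM $1$-form (recalled above) and $j(\gamma)$ is its opposite. The only substantive point is that the bracket \eqref{bracket} of two IM $1$-forms is again an IM $1$-form: its second slot is $[\theta,\theta']_*$, and I would verify that the resulting pair satisfies \eqref{IM1} and \eqref{IM2} by expanding with the derivation property of $\dstar$ together with the defining formulas for $\rho_*$ and $[\cdot,\cdot]_*$.

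Next, assume temporarily that $A$ integrates to a source-simply-connected Lie groupoid $\Gpd$; then $(A,\dstar,\Phi)$ integrates to a quasi-Poisson groupoid $(\Gpd,P,\Phi)$ \cite{ILX}, and the isomorphism $\sigma:\Omega^1_{\mult}(\Gpd)\xrightarrow{\cong}\mathrm{IM}^1(A)$ of \eqref{sigma1}--\eqref{sigma2} is available. I would show that $\sigma$ intertwines the four structure maps of Theorem \ref{Thm:case1} with those stated here: $\sigma\circ J=j$, the $P$-bracket is carried to \eqref{bracket}, the groupoid action to $(\nu,\theta)\triangleright\gamma$, and the groupoid $3$-bracket \eqref{3br} to $[\cdot,\cdot,\cdot]_3$. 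Granting these four identifications, the axioms \eqref{lie21}--\eqref{lie25} for $\Omega^1(M)\xrightarrow{j}\mathrm{IM}^1(A)$ follow at once from the already-established axioms of Theorem \ref{Thm:case1}. To dispense with the integrability hypothesis I would either observe that \eqref{lie21}--\eqref{lie25} are local, intrinsic identities built polynomially and differentially out of $\rho$, $\rho_*$, $[\cdot,\cdot]$, $[\cdot,\cdot]_*$, $\dstar$, and $\Phi$, so that it suffices to check them over neighbourhoods on which $A$ integrates; or else verify \eqref{lie21}--\eqref{lie25} directly from the quasi-Lie bialgebroid axioms.

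The main obstacle is the bracket identification in the transport step, and in particular translating the groupoid $3$-bracket \eqref{3br}, phrased through $\overleftarrow{\Phi}(\Theta_1,\Theta_2,\cdot)$ and Lie derivatives on $\Gpd$, into the algebroid expression $d\Phi(\theta_1,\theta_2,\theta_3)+\nu_1(\Phi(\theta_2,\theta_3))+\nu_2(\Phi(\theta_3,\theta_1))+\nu_3(\Phi(\theta_1,\theta_2))$; this requires differentiating the left-invariant contractions along the unit section and invoking \eqref{eqeq1}--\eqref{eqeq2} infinitesimally. If instead one verifies \eqref{lie21}--\eqref{lie25} directly, then the Jacobiator \eqref{lie21} and the coherence relation \eqref{lie25} are the demanding cases: the Jacobiator of $[\cdot,\cdot]_*$ on $\Gamma(A^*)$ fails to vanish precisely by $\dstar$- and $\Phi$-terms, and it is exactly the relations $\dstar^2=-[\Phi,\cdot]$ and $\dstar\Phi=0$ that force the $3$-bracket to appear and make the Jacobiator close.
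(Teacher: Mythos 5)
Your main route has a genuine gap at the step where you dispose of the integrability hypothesis. The locality patch --- ``it suffices to check \eqref{lie21}--\eqref{lie25} over neighbourhoods on which $A$ integrates'' --- presupposes that every point of $M$ has a neighbourhood $U$ such that $A|_U$ is integrable, and this is false for general Lie algebroids. Integrability is obstructed by the Crainic--Fernandes monodromy groups, and the obstruction can be essentially local: for instance, for suitable rescalings of the linear Poisson structure on $\mathfrak{su}(2)^*\cong\mathbb{R}^3$, the cotangent Lie algebroid $T^*_\pi M$ fails to be integrable on \emph{every} neighbourhood of the origin, because the symplectic spheres whose periods violate uniform discreteness accumulate at that point; and $(T^*_\pi M,\, d_{\mathrm{dR}},\, 0)$ is a perfectly good (quasi-)Lie bialgebroid to which Theorem \ref{Thm:quasiLiebitoIM2algebra} is supposed to apply. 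So the transport argument, even if all four intertwining identities for $\sigma$ are established, only proves the theorem for integrable $A$ and open restrictions thereof; it cannot reach the general case. (Integration to a \emph{local} Lie groupoid always exists, but then you would need the whole theory of multiplicative forms, the IM correspondence, and Theorem \ref{Thm:case1} redone for local groupoids, which is not available in the paper and is not a routine remark.)

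Your fallback --- ``or else verify \eqref{lie21}--\eqref{lie25} directly from the quasi-Lie bialgebroid axioms'' --- is exactly what the paper does, but in your proposal it is only named, not carried out, and it is the entire content of the result: the paper's proof occupies several subsections, consisting of (a) closure of the bracket \eqref{bracket} under the IM conditions \eqref{IM1}--\eqref{IM2}, using the Mackenzie--Xu identities \eqref{LiP}; (b) a key lemma identifying $[(\nu,\theta),j\gamma]$ with $j((\nu,\theta)\triangleright\gamma)$; and (c) the verification of the Jacobiator identity \eqref{Jacobi} and of the coherence axiom \eqref{lie25}, where $\dstar^2=-[\Phi,\cdot]$ and $\dstar\Phi=0$ enter in precisely the way you anticipate. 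Note also that even in the integrable case your transport step is not cheaper than the direct proof: showing that $\sigma$ carries $[\cdot,\cdot]_P$ to \eqref{bracket} and the groupoid $3$-bracket \eqref{3br} to the stated algebroid $3$-bracket is itself a substantial computation, which the paper performs only in its final subsection via the differentiation Lemma \ref{con}, logically \emph{after} Theorem \ref{Thm:quasiLiebitoIM2algebra}. So what the transport buys, where it applies, is a conceptual consistency check between the groupoid and algebroid pictures --- not a proof of the theorem in the stated generality.
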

 
 Recall that   $\Gamma(A)\xrightarrow{t}\Der(A)$ with $t(u)=[u,~\cdot~]$ is a strict Lie  $2$-algebra. It turns out that the weak  Lie  $2$-algebra we just constructed is connected to  $\Gamma(A)\xrightarrow{t}\Der(A)$ in a nice manner.

 \begin{proposition}\label{Prop:IMmorphism}Under the same assumptions as in the above theorem, there exists a weak Lie  $2$-algebra homomorphism $(\psi_0,\rho_*^*,\psi_2)$:
 	\begin{equation*}
 	\xymatrix{
 		\Omega^1(M)\ar@{^{}->}[d]_{j}  \ar[r]^{\rho_{*}^*} &  \Gamma(A) \ar@{^{}->}[d]^{t} \\
 		\mathrm{IM}^1(A)\ar[r]^{\psi_0} & \Der (A)},
 	\end{equation*}
 	where  $\psi_0(\nu,\theta)=\rho_{*}^* \nu(\cdot)+L_{\theta}(\cdot)$ and $\psi_2:\wedge^2 \mathrm{IM}^1(A)\to \Gamma(A)$ is given by 
 	\[\psi_2((\nu,\theta),(\nu',\theta'))=\Phi(\theta,\theta').\]
 
 \end{proposition}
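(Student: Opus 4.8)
The plan is to verify directly the conditions defining a morphism of weak Lie $2$-algebras for the triple $(\psi_0,\rho_*^*,\psi_2)$, with $\psi_0$ the degree-$0$ component $\mathrm{IM}^1(A)\to\Der(A)$, $\rho_*^*$ the degree-$(-1)$ component $\Omega^1(M)\to\Gamma(A)$, and $\psi_2$ the bilinear correction $F_2$. Since the target $\Gamma(A)\xrightarrow{t}\Der(A)$ is \emph{strict}, its homotopy map vanishes and its action is $\sigma\triangleright w=\sigma(w)$; consequently the coboundary $d'\psi_2$ in condition (1) is $[\Phi(\theta,\theta'),\cdot]$ and condition (3) loses its $[\cdot,\cdot,\cdot]_3'$ term. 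So one has to check the chain-map relation $\psi_0\circ j=t\circ\rho_*^*$ together with conditions (1)–(3).

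First I would establish the preliminary fact that $\psi_0(\nu,\theta)=\rho_*^*\circ\nu+L_\theta$ actually lands in $\Der(A)$, where $L_\theta=\iota_\theta d_*+d_*\iota_\theta$ is the degree-$0$ derivation of $\Gamma(\wedge^\bullet A)$ built from the quasi-differential $d_*$. As $\rho_*^*\circ\nu$ is $C^\infty(M)$-linear, the symbol of $\psi_0(\nu,\theta)$ equals that of $L_\theta$, namely $\rho_*(\theta)$, and the three derivation axioms then follow from the derivation properties of $d_*$ and the IM-axioms \eqref{IM1}–\eqref{IM2}. The same inputs give the chain map: writing $u=\rho_*^*\gamma$, the relation $\psi_0(j(\gamma))=[u,\cdot]$ unwinds to $-\rho_*^*(\iota_{\rho(\cdot)}d\gamma)-L_{\rho^*\gamma}=[\,\rho_*^*\gamma,\cdot\,]$ in $\Der(A)$, a consequence of the compatibilities among $\rho,\rho_*,d$ and $d_*$.

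Next come the two lower compatibilities. Condition (2), after inserting $j(\gamma)=(-\iota_{\rho(\cdot)}d\gamma,-\rho^*\gamma)$ and noting $\psi_2((\nu,\theta),j(\gamma))=-\Phi(\theta,\rho^*\gamma)$, reduces to comparing $\rho_*^*\big((\nu,\theta)\triangleright\gamma\big)$ with $\psi_0(\nu,\theta)(\rho_*^*\gamma)$; this is a short computation of the same flavour as the chain-map check. Condition (1) is the main obstacle: one must push the full bracket \eqref{bracket} through $\psi_0$ and match it against the derivation commutator $[\psi_0(\nu,\theta),\psi_0(\nu',\theta')]$. The bundle-map parts and the mixed Lie-derivative/bundle-map parts cancel against the corresponding terms of \eqref{bracket}, while the purely Lie-derivative part $[L_\theta,L_{\theta'}]$ fails to equal $L_{[\theta,\theta']_*}$ by exactly the defect measured by $d_*^2=-[\Phi,\cdot]$; this defect is what produces the required correction $t(\Phi(\theta,\theta'))=[\Phi(\theta,\theta'),\cdot]$.

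Finally, condition (3) is where the hypothesis $d_*\Phi=0$ enters decisively. Applying $\rho_*^*$ to the $3$-bracket and expanding the right-hand side $\sum_{\mathrm{c.p.}}\big(\psi_0(\nu_1,\theta_1)(\Phi(\theta_2,\theta_3))-\Phi([\theta_1,\theta_2]_*,\theta_3)\big)$, the $\rho_*^*\nu_i$-contributions cancel termwise against the $\nu_i$-terms of the $3$-bracket. Using $d_*f=\rho_*^*(df)$ for $f\in C^\infty(M)$, the leftover reads $d_*\big(\Phi(\theta_1,\theta_2,\theta_3)\big)=\sum_{\mathrm{c.p.}}\big(L_{\theta_1}(\Phi(\theta_2,\theta_3))-\Phi([\theta_1,\theta_2]_*,\theta_3)\big)$, which is precisely the triple contraction of $d_*\Phi=0$ expanded via the Cartan calculus for $d_*$. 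The principal difficulty throughout is organizational---taming the many terms in (1) and (3) so that the two quasi-Lie bialgebroid identities $d_*^2=-[\Phi,\cdot]$ and $d_*\Phi=0$ close the computation. As a structural guide and sign check, the whole square is the infinitesimal shadow of Proposition \ref{Lie2homo}, so for integrable $A$ the claim also follows by differentiating that result.
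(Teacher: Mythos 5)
Your proposal follows essentially the same route as the paper's proof: first check that $\psi_0(\nu,\theta)$ is a derivation of $A$ and that $\psi_0\circ j=t\circ\rho_*^*$ using the IM axioms \eqref{IM1}--\eqref{IM2} and the Mackenzie--Xu relations \eqref{LiP}, then verify the three morphism conditions directly, with $d_*^2=-[\Phi,\cdot]$ responsible for the correction $t\circ\psi_2$ in condition (1) and $d_*\Phi=0$ closing condition (3) after applying $\rho_*^*\circ d=d_*$ on functions; this is exactly how the paper proceeds.

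One bookkeeping caveat on condition (1): the cancellation is more interleaved than you describe. The purely Lie-derivative defect $[L_\theta,L_{\theta'}]-L_{[\theta,\theta']_*}$ on $\Gamma(A)$ is \emph{not} exactly the operator $-[\Phi(\theta,\theta'),\cdot]$; paired with $\alpha\in\Gamma(A^*)$ it carries the extra terms $(d_A\theta)(\Phi(\theta',\alpha),x)-(d_A\theta')(\Phi(\theta,\alpha),x)$ (the paper's Equation \eqref{eq2}). Likewise the mixed contribution $\psi_0[\nu,\theta']-[\psi_0(\nu),\psi_0(\theta')]=L_{\theta'}\rho_*^*\nu(\cdot)-\rho_*^*L_{\rho_*\theta'}\nu(\cdot)$ does \emph{not} vanish against the bracket terms of \eqref{bracket} on its own; by the paper's Equation \eqref{eq1} (which rests on \eqref{IM1}) it equals precisely the $d_A\theta$-type terms above with opposite sign. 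It is the cancellation \emph{between} these two groups that leaves exactly $[\Phi(\theta,\theta'),\cdot]=t\psi_2((\nu,\theta),(\nu',\theta'))$. Your two claimed independent cancellations are false as stated, but since the discrepancies offset each other, carrying out your computation honestly would reproduce the paper's argument; only the narrative of which terms kill which needs revising.
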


 	The proofs of these results are quite involved and hence we divide them into several parts.
 \subsubsection{Well-definedness of the $2$-bracket} We verify that the resulting pair  $(\tilde{\nu},\tilde{\theta}):=[(\nu,\theta),(\nu', \theta')]$ given by Equation \eqref{bracket} satisfies \eqref{IM1} and \eqref{IM2}, {namely, $[(\nu,\theta),(\nu', \theta')]\in \mathrm{IM}^1(A)$. }

 Since $(A,\dstar ,\Phi)$ is a quasi-Lie bialgebroid, we have 
 \[d_A [\theta,\theta']_*=[d_A\theta,\theta']_*+[\theta,d_A \theta']_*,\qquad \forall \theta,\theta'\in \Gamma(A^*).\]
 Then using \eqref{IM1}  for $(\nu,\theta), (\nu',\theta')$ and the following relations due to \cite{MX1}:
 \begin{eqnarray}\label{LiP}
 L_{\rho^*_*\gamma} \theta=-[\rho^*\gamma,\theta]_*-\rho^*(\iota_{\rho_*\theta} d\gamma),\quad \qquad L_{\rho^*\gamma} x=-[\rho_*^*\gamma,x]-\rho_*^*(\iota_{\rho x} d\gamma)
 \end{eqnarray}
 for all $\gamma\in \Omega^1(M),\theta\in \Gamma(A^*), x\in \Gamma(A)$,
 we further obtain
 \begin{eqnarray*}
 	d_A [\theta,\theta']_*(x,y)&=&-L_{\theta'}(d_A \theta)(x,y)-c.p.\\ &=&
 	-\rho_*(\theta')d_A \theta(x,y)+d_A \theta(L_{\theta'}x,y)+d_A \theta(x,L_{\theta'}y)-c.p.\\ &=&
 	-\rho_*(\theta')\langle \rho(y),\nu(x)\rangle+\langle \rho(y),\nu(L_{\theta'} x)\rangle+\langle \rho(L_{\theta'} y),\nu(x)\rangle-c.p.
 	\\ &=&\langle y,[\rho^*\nu(x),\theta']_*\rangle+\langle \rho(y),\nu(L_{\theta'} x)\rangle-c.p.
 	\\ &=&\langle y, -L_{\rho_*^* \nu(x)} \theta'-\rho^*(\iota_{\rho_*\theta'} d\nu(x))\rangle+\langle \rho(y),\nu(L_{\theta'} x)\rangle-c.p.
 	\\ &=&\langle y, -\iota_{\rho_*^* \nu(x)} d_A \theta'-\rho^*d\langle \nu(x),\rho_* \theta' \rangle -\rho^*(\iota_{\rho_*\theta'} d\nu(x))\rangle+\langle \rho y,\nu(L_{\theta'} x)\rangle-c.p.
 	\\ &=&
 	\langle y, -\rho^*\nu'(\rho_*^* \nu(x))-\rho^*L_{\rho_*\theta'} \nu(x)\rangle+\langle \rho y,\nu(L_{\theta'} x)\rangle-c.p.\\ &=&\langle 
 	\rho y,-\nu'\rho_*^*\nu(x)-L_{\rho_*\theta'}\nu(x)+\nu(L_{\theta'} x)+\nu\rho_*^*\nu'(x)+L_{\rho_*\theta}\nu'(x)-\nu'(L_{\theta} x)\rangle.
 \end{eqnarray*}
 So we proved \eqref{IM1}.  Then it is left to check \eqref{IM2} for $(\tilde{\nu},\tilde{\theta})$. Using the formula 
 \begin{eqnarray}\label{Ld}
 L_{\theta}[x,y]=[L_{\theta}x,y]+[x,L_{\theta} y]-L_{\iota_x d_{A} \theta} y+\iota_{\iota_y d_{A} \theta} d_* x,
 \end{eqnarray}
 we have 
 \begin{eqnarray*}
 	\tilde{\nu}[x,y]&=&\nu\rho_*^*\nu'[x,y]+L_{\rho_*\theta}\nu'[x,y]-\nu'(L_{\theta}[x,y])-c.p.\\ &=&
 	\nu \rho_*^*\big(L_{\rho x} \nu'(y)-\iota_{\rho y}d\nu'(x)\big)+L_{\rho_*\theta} (L_{\rho x} \nu'(y)-\iota_{\rho y} d\nu'(x))\\ &&
 	-L_{\rho(L_{\theta} x)} \nu'(y)+\iota_{\rho y} d\nu'(L_{\theta} x)-L_{\rho x} \nu'(L_{\theta} y)+\iota_{\rho(L_{\theta} y)} d\nu'(x)+\nu'(L_{\iota_x d_{A} \theta} y-\iota_{\iota_y d_{A} \theta} d_*x)-c.p.,
 \end{eqnarray*}
 and 
 \begin{eqnarray*}
 	&&L_{\rho x} \tilde{\nu}(y)-\iota_{\rho y} d\tilde{\nu}(x)\\ &=&L_{\rho x}\big(\nu\rho_*^*\nu'(y)+L_{\rho_*\theta} \nu'(y)-\nu'(L_{\theta} y)\big)-\iota_{\rho y} d\big(\nu\rho_*^*\nu'(x)+L_{\rho_*\theta} \nu'(x)-\nu'(L_{\theta} x)\big)-c.p..
 \end{eqnarray*}
 According to  Equations \eqref{IM2}  and \eqref{LiP}, we have
 \begin{eqnarray*}
 	\nu\rho_*^*L_{\rho x} \nu'(y)&=&\nu\big(\rho_*^*\iota_{\rho x} d\nu'(y)+\rho_*^*d\iota_{\rho x} \nu'(y)\big)
 	\\ &=&\nu([x,\rho_*^*\nu'(y)]-L_{\rho^*\nu'(y)} x+d_{*}\iota_x \rho^*\nu'(y))\\ &=&
 	L_{\rho x} \nu \rho_*^*\nu'(y)-\iota_{\rho \rho_*^*\nu'(y)} d\nu(x)-\nu(\iota_{\rho^*\nu'(y)} d_* x),
 \end{eqnarray*}
 and
 \begin{eqnarray*}
 	-\nu\rho_*^*\iota_{\rho y}d\nu'(x)&=&\nu(L_{\rho^*\nu'(x)} y+[\rho_*^*\nu'(x),y])\\ &=&
 	\nu(L_{\rho^*\nu'(x)} y)+L_{\rho\rho_*^* \nu'(x)} \nu(y)-\iota_{\rho y} d\nu(\rho_*^*\nu'(x)).
 \end{eqnarray*}
 Utilizing the above  relations to $\tilde{\nu}[x,y]$, we obtain\begin{eqnarray*}
 	\tilde{\nu}[x,y]-L_{\rho x} \tilde{\nu}(y)-\iota_{\rho y} d\tilde{\nu}(x)&=&\big(L_{[\rho_* \theta,\rho x]} \nu'(y)-L_{\rho (L_{\theta} x)} \nu'(y)\big)+\big(\iota_{[\rho y,\rho_*\theta]} d\nu'(x)+\iota_{\rho(L_{\theta} y)} d\nu'(x)\big)\\ &&+L_{\rho\rho_*^* \nu'(x)} \nu(y)-\iota_{\rho \rho_*^*\nu'(y)} d\nu(x)-c.p.\\ 
 	&=&0,
 \end{eqnarray*}
 where we have used \eqref{IM1}, the Cartan  formulas 
 \[d\circ L_u=L_u\circ d,\qquad L_u\circ \iota_v-\iota_v \circ L_u=\iota_{[u,v]},\qquad \forall u,v\in \mathfrak{X}^1(M),\]
 and the equations
 \begin{eqnarray}\label{LiP2}
 [\rho_*\theta,\rho x]=\rho(L_{\theta} x)-\rho_*(\iota_x d_{A} \theta),\qquad \rho_*\circ  \rho^*=-\rho\circ \rho_*^*.
 \end{eqnarray}
 Hence we proved that $(\tilde{\nu},\tilde{\theta})$ satisfies \eqref{IM2}, and  verified that $[(\nu,\theta),(\nu',\theta')]\in \mathrm{IM}^1(A)$.
 \subsubsection{A key property of the $2$-bracket}

 \begin{lemma} Given $(\nu,\theta)\in \mathrm{IM}^1(A)$, for all \ $\gamma\in \Omega^1(M)$, define $\mu=\mu(\gamma):=\nu(\rho_*^*\gamma)+L_{\rho_*\theta} \gamma\in \Omega^1(M)$.
 	We have the following identity 
 	\begin{eqnarray}\label{ex}
 	[(\nu,\theta),(\iota_{\rho(\cdot)} d\gamma,\rho^*\gamma)]=(\iota_{\rho(\cdot)}d\mu,\rho^*\mu).
 	\end{eqnarray}
 	\end{lemma}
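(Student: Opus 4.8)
The plan is to expand the left-hand side by the defining formula \eqref{bracket}, writing $(\nu',\theta'):=(\iota_{\rho(\cdot)}d\gamma,\rho^*\gamma)$ for the IM $1$-form attached to $\gamma$, and then to match the resulting pair $[(\nu,\theta),(\nu',\theta')]=(\tilde\nu,\tilde\theta)$ componentwise against $(\iota_{\rho(\cdot)}d\mu,\rho^*\mu)$. One first observes that $\mu$ is nothing but $(\nu,\theta)\triangleright\gamma$, so the lemma is exactly the compatibility of the $2$-bracket with the action through this correspondence; the two components are treated separately.

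For the $\Gamma(A^*)$-component I would show $\tilde\theta=[\theta,\rho^*\gamma]_*=\rho^*\mu$. By skew-symmetry of $[\cdot,\cdot]_*$ and the first relation in \eqref{LiP}, one has $[\theta,\rho^*\gamma]_*=L_{\rho_*^*\gamma}\theta+\rho^*(\iota_{\rho_*\theta}d\gamma)$. Expanding $L_{\rho_*^*\gamma}\theta=\iota_{\rho_*^*\gamma}d_A\theta+d_A\langle\rho_*^*\gamma,\theta\rangle$ and invoking \eqref{IM1} in the form $\iota_x d_A\theta=\rho^*\nu(x)$ gives $\iota_{\rho_*^*\gamma}d_A\theta=\rho^*\nu(\rho_*^*\gamma)$, while $d_A\langle\rho_*^*\gamma,\theta\rangle=\rho^*d(\iota_{\rho_*\theta}\gamma)$ since $d_Af=\rho^*df$. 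Reassembling with Cartan's formula $L_{\rho_*\theta}\gamma=\iota_{\rho_*\theta}d\gamma+d\iota_{\rho_*\theta}\gamma$ collapses the expression to $\rho^*\big(\nu(\rho_*^*\gamma)+L_{\rho_*\theta}\gamma\big)=\rho^*\mu$.

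The substantial part is the $\Omega^1(M)$-component: evaluating the six-term $\tilde\nu$ of \eqref{bracket} on an arbitrary $x\in\Gamma(A)$ and using $d\mu=d\nu(\rho_*^*\gamma)+L_{\rho_*\theta}d\gamma$, the goal is $\tilde\nu(x)=\iota_{\rho x}d\nu(\rho_*^*\gamma)+\iota_{\rho x}L_{\rho_*\theta}d\gamma$. I would split the six terms into two packets. For the first packet $\{\,\nu\rho_*^*\nu'(x),\,-L_{\rho_*\theta'}\nu(x),\,\nu(L_{\theta'}x)\,\}$, rewriting $L_{\theta'}x=L_{\rho^*\gamma}x$ by the second relation of \eqref{LiP} cancels the $\nu\rho_*^*\nu'(x)$ contributions and leaves $-\nu[\rho_*^*\gamma,x]+L_{\rho\rho_*^*\gamma}\nu(x)$ (using $\rho_*\rho^*=-\rho\rho_*^*$ from \eqref{LiP2}); then \eqref{IM2} expands $\nu[\rho_*^*\gamma,x]=L_{\rho\rho_*^*\gamma}\nu(x)-\iota_{\rho x}d\nu(\rho_*^*\gamma)$, so the packet reduces to $\iota_{\rho x}d\nu(\rho_*^*\gamma)$. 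For the second packet $\{\,-\nu'\rho_*^*\nu(x),\,L_{\rho_*\theta}\nu'(x),\,-\nu'(L_\theta x)\,\}$, substituting $\rho(L_\theta x)=[\rho_*\theta,\rho x]+\rho_*(\iota_x d_A\theta)$ from \eqref{LiP2}, then $\iota_x d_A\theta=\rho^*\nu(x)$ and once more $\rho_*\rho^*=-\rho\rho_*^*$, makes the $\iota_{\rho\rho_*^*\nu(x)}d\gamma$ terms cancel and leaves $L_{\rho_*\theta}\iota_{\rho x}d\gamma-\iota_{[\rho_*\theta,\rho x]}d\gamma$; the Cartan identity $L_{\rho_*\theta}\iota_{\rho x}-\iota_{\rho x}L_{\rho_*\theta}=\iota_{[\rho_*\theta,\rho x]}$ then yields $\iota_{\rho x}L_{\rho_*\theta}d\gamma$. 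Adding the two packets gives exactly $\iota_{\rho x}d\mu$.

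The hard part is entirely organizational rather than conceptual: no identity beyond \eqref{IM1}, \eqref{IM2}, \eqref{LiP}, \eqref{LiP2} and the standard Cartan calculus is needed, but one must track carefully which of the three Lie derivatives (the $A$-, the $A^*$-, or the $TM$-flavoured one) acts in each term, and choose the grouping so that the $\nu\rho_*^*\nu'$- and $\iota_{\rho\rho_*^*\nu(x)}d\gamma$-terms cancel before the IM-relations are applied. I expect the bookkeeping of signs across the six terms of $\tilde\nu$ to be the only real source of error.
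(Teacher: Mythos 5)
Your proposal is correct and follows essentially the same route as the paper's proof: both components are obtained by expanding the bracket \eqref{bracket} and reducing via \eqref{LiP}, \eqref{LiP2}, \eqref{IM1}, \eqref{IM2} and the Cartan calculus, with the cancellations $\rho_*\rho^*=-\rho\rho_*^*$ and $d\circ L_{\rho_*\theta}=L_{\rho_*\theta}\circ d$ at exactly the same places. Your two ``packets'' of three terms are just a slight regrouping of the paper's sequential reduction of the same six terms, so there is no substantive difference.
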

 \begin{proof}

 	To simplify notations, we denote $(\hat{\nu},\hat{\theta}):=[(\nu,\theta),(\iota_{\rho(\cdot)} d\gamma,\rho^*\gamma)]$. Then by  Equations \eqref{LiP} and \eqref{IM1}, we have 
 	\[\hat{\theta}=[\theta,\rho^*\gamma]_*=L_{\rho_*^*\gamma}\theta+\rho^*(\iota_{\rho_*\theta}d\gamma)=\iota_{\rho_*^*\gamma} d_A  \theta+d_A \langle \gamma, \rho_* \theta\rangle+\rho^*(\iota_{\rho_*\theta}d\gamma)=\rho^*\nu(\rho_*^*\gamma)+\rho^*L_{\rho_*\theta} \gamma,\]
 	which is exactly $\rho^*\mu$. Next we compute $\hat{\nu}$. When it is applied to $x\in \Gamma(A)$, and using \eqref{LiP},\eqref{LiP2},\eqref{IM1}, \eqref{IM2}, we can explicitly describe  $\hat{\nu}$:
 	\begin{eqnarray*}
 		\hat{\nu}(x)&=&\nu \rho_*^*\iota_{\rho(x)} d\gamma-\iota_{\rho(\rho_*^*\nu(x))}d\gamma+L_{\rho_* \theta} \iota_{\rho x}d\gamma-\iota_{\rho(L_{\theta} x)}d\gamma-L_{\rho_*\rho^*\gamma} \nu(x)+\nu(L_{\rho^*\gamma} x)\\ &=&\nu([x,\rho_*^*\gamma])-\iota_{[\rho_* \theta,\rho x]}d\gamma+L_{\rho_* \theta} \iota_{\rho x}d\gamma-L_{\rho_*\rho^*\gamma} \nu(x)\\ &=&-L_{\rho(\rho_*^* \gamma)} \nu(x)+\iota_{\rho x} d\nu(\rho_*^*\gamma)-(L_{\rho_* \theta}\iota_{\rho x}-\iota_{\rho x}L_{\rho_* \theta}) d\gamma+L_{\rho_* \theta} \iota_{\rho x}d\gamma-L_{\rho_*\rho^*\gamma} \nu(x)
 		\\ &=&\iota_{\rho x} d\nu(\rho_*^*\gamma)+\iota_{\rho x}L_{\rho_* \theta} d\gamma\\ &=&\iota_{\rho x} d\mu,
 	\end{eqnarray*}
 	where in the second-to-last calculation, we utilized $d\circ L_{\rho_*\theta}=L_{\rho_*\theta}\circ d$. Thus we proved \eqref{ex}.
 \end{proof}
 
 \subsubsection{Proof of Theorem \ref{Thm:quasiLiebitoIM2algebra}}
  \begin{itemize}
 	\item[~(1)~] \qquad We first show two relations:
 	\begin{eqnarray*}\label{ideal}
 		[(\nu,\theta),j\gamma]=j((\nu,\theta)\triangleright \gamma),\qquad (j\gamma)\triangleright \gamma'=-(j\gamma')\triangleright \gamma,
 	\end{eqnarray*}
 	for $(\nu,\theta)\in \mathrm{IM}^1(A)$ and $\gamma,\gamma'\in \Omega^1(M)$. 
 	
 	The first one follows directly from \eqref{ex}. 
 	
 	To see the second one, consider the map  $\pi^\sharp:=\rho\circ \rho^*_*: T^*M\to TM$. Since $\rho\circ \rho_*^*=-\rho_*\circ \rho^*$, $\pi$ is a bivector field on the base manifold $M$ and thus defines a skew-symmetric bracket (not necessarily Lie) $[\cdot,\cdot]_\pi$ on $\Omega^1(M)$. It follows that  
 	\[(j\gamma)\triangleright \gamma'=L_{\pi^\sharp \gamma} \gamma'-\iota_{\pi^\sharp \gamma'} d\gamma=[\gamma,\gamma']_\pi=-(j\gamma')\triangleright \gamma.\]
 	\item[~(2)~]  \qquad Next, we show that the $2$-bracket \eqref{bracket} satisfies a generalized type of Jacobi identity:
 	\begin{eqnarray}\label{Jacobi}
 	[[(\nu_1,\theta_1),(\nu_2,\theta_2)],(\nu_3,\theta_3)]+c.p.=-j[(\nu_1,\theta_1),(\nu_2,\theta_2),(\nu_3,\theta_3)]_3.
 	\end{eqnarray}
 To verify the identity proposed above that involves the $2$-bracket $[\cdot,\cdot]$, which is $\mathbb{R}$-bilinear, all possible combinations of $\nu_i$ and $\theta_i$ should be considered. For instance, when focusing solely on the pure entries of $\nu_i$, it is easy to see that they do not contribute to the left hand side of Equation \eqref{Jacobi}. This is due to the fact that by definition, we have $[[\nu_1,\nu_2],\nu_3]+c.p.=0$.

 Using the axioms of a quasi-Lie bialgebroid $(A, \dstar, \Phi)$ and Equation \eqref{IM1}, we can establish the following equality by considering only $\theta_i$ in the entries: 	\begin{eqnarray*}
 		[[\theta_1,\theta_2],\theta_3]+c.p.&=&d_A\Phi(\theta_1,\theta_2,\theta_3)+\Phi(d_A  \theta_1,\theta_2,\theta_3)-\Phi( \theta_1,d_A \theta_2,\theta_3)+\Phi(\theta_1,\theta_2,d_A \theta_3) \\ &=&\rho^*d\Phi(\theta_1,\theta_2,\theta_3)+\rho^*\nu_1(\Phi(\theta_2,\theta_3))+\rho^*\nu_2(\Phi(\theta_3,\theta_1))+\rho^*\nu_3(\Phi(\theta_1,\theta_2))\\ &=&
 		\rho^*[(\nu_1,\theta_2),(\nu_2,\theta_2),(\nu_3,\theta_3)]_3.
 	\end{eqnarray*}

 	In the meantime, we have the following mixed terms:
 	\begin{eqnarray*}
 		&&[[\nu_1,\nu_2],\theta_3]+[[\nu_2,\theta_3],\nu_1]+[[\theta_3,\nu_1],\nu_2]\\ &=&
 		[\nu_1,\nu_2](L_{\theta_3} (\cdot))-L_{\rho_*\theta_3}[\nu_1,\nu_2](\cdot)+\big(
 		[\nu_2,\theta_3]\rho_*^*\nu_1-\nu_1\rho_*^*[\nu_2,\theta_3]-c.p.(\nu_1,\nu_2)\big) \\ &=&
 		(\nu_1\rho_*^*\nu_2-\nu_2\rho_*^*\nu_1)(L_{\theta_3} (\cdot))-L_{\rho_*\theta_3}(\nu_1\rho_*^*\nu_2-\nu_2\rho_*^*\nu_1)\\ &&+\big(
 		\nu_2(L_{\theta_3} \rho_*^* \nu_1(\cdot))-L_{\rho_*\theta_3}(\nu_2\rho_*^*\nu_1(\cdot))-\nu_1\rho^*_*\nu_2(L_{\theta_3}(\cdot))+\nu_1\rho_*^*(L_{\rho_*\theta_3} \nu_2(\cdot))-c.p.(\nu_1,\nu_2)\big)\\ &=&
 		\nu_1\big(\rho_*^*(L_{\rho_*\theta_3} \nu_2(\cdot))-L_{\theta_3} \rho_*^* \nu_2(\cdot)\big)-c.p.(\nu_1,\nu_2).
 	\end{eqnarray*}
 	Similarly, we have the terms
 	\begin{eqnarray*}
 		&&[[\nu_1,\theta_2],\theta_3]+[[\theta_3,\nu_1],\theta_2]+[[\theta_2,\theta_3],\nu_1]\\ &=&
 		[\nu_1,\theta_2](L_{\theta_3}(\cdot))-L_{\rho_*\theta_3} [\nu_1,\theta_2](\cdot)-c.p.(\theta_2,\theta_3)+L_{\rho_*[\theta_2,\theta_3]_*} \nu_1-\nu_1(L_{[\theta_2,\theta_3]_*} (\cdot))\\ &=&\nu_1(L_{\theta_2}L_{\theta_3}(\cdot))-L_{\rho_* \theta_2} \nu_1(L_{\theta_3}(\cdot))-L_{\rho_*\theta_3} \nu_1(L_{\theta_2}(\cdot))+L_{\rho_*\theta_3} L_{\rho_*\theta_2} \nu_1-c.p.(\theta_2,\theta_3)\\ &&+L_{\rho_*[\theta_2,\theta_3]_*} \nu_1-\nu_1(L_{[\theta_2,\theta_3]_*} (\cdot))\\ &=&\nu_1\big([L_{\theta_2},L_{\theta_3}]-L_{[\theta_2,\theta_3]_*}(\cdot)\big)+L_{\rho \Phi(\theta_2,\theta_3)} \nu_1(\cdot),
 	\end{eqnarray*}
 	where in the last step we used the relation 
 	\begin{eqnarray}\label{anchor}
 	\rho_*[\theta_2,\theta_3]_*=[\rho_*\theta_2,\rho_*\theta_3]+\rho\Phi(\theta_2,\theta_3).
 	\end{eqnarray}

 	Note also that for $\alpha\in \Gamma(A^*)$ and $x\in \Gamma(A)$, by Equations \eqref{IM1} and \eqref{anchor},
 	we have 
 	\begin{eqnarray}\label{eq1}
 	\nonumber &&\langle \rho_*^*(L_{\rho_*\theta_3} \nu_2(x))-L_{\theta_3} \rho_*^* \nu_2(x),\alpha\rangle\\ &=&\nonumber \rho_*\theta_3\langle \nu_2(x),\rho_*\alpha\rangle-\langle \nu_2(x),[\rho_*\theta_3,\rho_*\alpha]\rangle-\rho_*\theta_3\langle \nu_2(x),\rho_*\alpha\rangle+\langle \nu_2(x),\rho_*[\theta_3,\alpha]_*\rangle\\ &=&\langle \nu_2(x),\rho\Phi(\theta_3,\alpha)\rangle=(d_A  \theta_2)(x,\Phi(\theta_3,\alpha)),%\\ &=&\rho x\Phi(\theta_2,\theta_3,\alpha)-\rho \Phi(\theta_3,\alpha) \langle \theta_2,x\rangle-\langle
 	%\theta_2,[x,\Phi(\theta_3,\alpha)]\rangle,
 	\end{eqnarray}
 	and
 	\begin{eqnarray}\label{eq2}
 	\nonumber&& \langle [L_{\theta_2},L_{\theta_3}]x-L_{[\theta_2,\theta_3]_*}x,\alpha\rangle \\ &=&\nonumber\rho_*\theta_2\langle L_{\theta_3} x,\alpha\rangle-\langle L_{\theta_3} x,[\theta_2,\alpha]_*\rangle-c.p.(\theta_2,\theta_3)-\rho_*[\theta_2,\theta_3]_*\langle x,\alpha\rangle+\langle x,[[\theta_2,\theta_3]_*,\alpha]_*\rangle\\ &=&\nonumber\rho_*\theta_2\rho_* {\theta_3} \langle x,\alpha\rangle+\langle x,[\theta_3,[\theta_2,\alpha]_*]_*\rangle-c.p.(\theta_2,\theta_3)-\rho_*[\theta_2,\theta_3]_*\langle x,\alpha\rangle+\langle x,[[\theta_2,\theta_3]_*,\alpha]_*\rangle\\ &=&\nonumber-\rho\Phi(\theta_2,\theta_3)\langle x,\alpha\rangle+\langle x,d_A \Phi(\theta_2,\theta_3,\alpha)+
 	\Phi(d_A  \theta_2,\theta_3,\alpha)-\Phi( \theta_2,d_A \theta_3,\alpha)+\Phi(\theta_2,\theta_3,d_A  \alpha)\rangle\\ &=&
 	(d_A \theta_2)(\Phi(\theta_3,\alpha),x)-(d_A \theta_3)(\Phi(\theta_2,\alpha),x)-\langle \alpha,[\Phi(\theta_2,\theta_3),x]\rangle.
 	\end{eqnarray}
 	Combining the above equalities, we can find the $\Hom(A,T^*M)$-component of the left hand side of Equation \eqref{Jacobi}:
 	\begin{eqnarray*}
 		&&\mathrm{pr}_{\Hom(A,T^*M)}([[(\nu_1,\theta_1),(\nu_2,\theta_2)],(\nu_3,\theta_3)]+c.p.)\\ &=&\big(\nu_1\big(\rho_*^*(L_{\rho_*\theta_3} \nu_2(\cdot))-L_{\theta_3} \rho_*^* \nu_2(\cdot)\big)-c.p.(\nu_1,\nu_2)+c.p.(3)\big)\\ &&+
 		\big(\nu_1\big([L_{\theta_2},L_{\theta_3}]-L_{[\theta_2,\theta_3]_*}(\cdot)\big)+L_{\rho \Phi(\theta_2,\theta_3)} \nu_1(\cdot)+c.p.(3)\big)\\ &=&-\nu_1([\Phi(\theta_2,\theta_3),\cdot])+L_{\rho \Phi(\theta_2,\theta_3)} \nu_1(\cdot)+c.p.(3)\\ &=&\iota_{\rho(\cdot)} d\nu_1(\Phi(\theta_2,\theta_3))+c.p.(3)\\ &=&\iota_{\rho(\cdot)}d[(\nu_1,\theta_2),(\nu_2,\theta_2),(\nu_3,\theta_3)]_3,
 	\end{eqnarray*}
 	where in the second-to-last step we have used \eqref{IM2}. Here  ``c.p.(3)'' means  the rest terms involving   $\nu_2,\theta_3,\nu_3$ and $\nu_3,\theta_1,\nu_1$. 
 	
 	The above lines are exactly the desired Equation \eqref{Jacobi}.
 	\item[~(3)~] \qquad  Third,  we verify a relation:
 	\[[(\nu_1,\theta_1),(\nu_2,\theta_2)]\triangleright \gamma-(\nu_1,\theta_1)\triangleright \big((\nu_2,\theta_2)\triangleright \gamma\big)+(\nu_2,\theta_2)\triangleright \big((\nu_1,\theta_1)\triangleright \gamma\big)=-[(\nu_1,\theta_1),(\nu_2,\theta_2),j\gamma]_3.\]
 	In fact, by \eqref{anchor} and \eqref{eq1}, we can compute the left hand side of the above equation: 
 	\begin{eqnarray*}
 		 & &\big(\nu_1\rho_*^*\nu_2+\nu_1(L_{\theta_2}(\cdot))-L_{\rho_*\theta_2} \nu_1(\cdot)-c.p.(2)\big)(\rho_*^*\gamma)+L_{\rho_*[\theta_1,\theta_2]_*}\gamma\\ &&-\big(\nu_1\rho_*^*(\nu_2\rho_*^*\gamma+L_{\rho_*\theta_2}\gamma)+L_{\rho_*\theta_1}(\nu_2\rho_*^*\gamma+L_{\rho_*\theta_2}\gamma)-c.p.(2)\big)\\ &=&
 		\nu_1(L_{\theta_2}\rho_*^*\gamma-\rho^*_*L_{\rho_*\theta_2} \gamma)-c.p.(2)+L_{\rho\Phi(\theta_1,\theta_2)} \gamma
 		\\ &=&\nu_1(\Phi(\theta_2,\rho^*\gamma))-\nu_2(\Phi(\theta_1,\rho^*\gamma))+d\Phi(\theta_1,\theta_2,\rho^*\gamma)+\iota_{\rho\Phi(\theta_1,\theta_2)} d\gamma,
 	\end{eqnarray*}
 which exactly match with the right hand side.
 	\item[~(4)~] \qquad  We finally check  compatibility of the $2$-bracket and the $3$-bracket, namely, the relation
 	\[-(\nu_4,\theta_4)\triangleright [(\nu_1,\theta_1),(\nu_2,\theta_2),(\nu_3,\theta_3)]_3+c.p.(4)=[[(\nu_1,\theta_1),(\nu_2,\theta_2)],(\nu_3,\theta_3),(\nu_4,\theta_4)]_3+c.p.(6).\]
 	In fact, its left hand side reads
 	\begin{eqnarray*}
 		 &&-\nu_4\rho_*^*\big(\nu_1(\Phi(\theta_2,\theta_3))+c.p.(3)+d\Phi(\theta_1,\theta_2,\theta_3)\big)\\ &&-L_{\rho_*\theta_4}\big(\nu_1(\Phi(\theta_2,\theta_3))+c.p.(3)+d\Phi(\theta_1,\theta_2,\theta_3)\big)+c.p.(4),
 	\end{eqnarray*}
 	while the right hand side reads 
 	\begin{eqnarray*}
 		\mathrm{RHS}&=&d\Phi([\theta_1,\theta_2]_*,\theta_3,\theta_4)+\big(\nu_1\rho_*^*\nu_2+\nu_1(L_{\theta_2}(\cdot))-L_{\rho_*\theta_2}\nu_1(\cdot)-c.p.(2)\big)(\Phi(\theta_3,\theta_4))\\ &&+\nu_3(\Phi(\theta_4,[\theta_1,\theta_2]_*))+\nu_4(\Phi([\theta_1,\theta_2]_*,\theta_3))+c.p.(6).
 	\end{eqnarray*}
 	So,  subtraction of the two sides  equals  
 	\begin{eqnarray*}
 		&&-\nu_4(d_*\Phi(\theta_1,\theta_2,\theta_3))+c.p.(4)\\ &&-\big(\nu_1(L_{\theta_2}\Phi(\theta_3,\theta_4))-\nu_2(L_{\theta_1}\Phi(\theta_3,\theta_4))+\nu_3(\Phi(\theta_4,[\theta_1,\theta_2]_*))+\nu_4(\Phi([\theta_1,\theta_2]_*,\theta_3))+c.p.(6)\big)\\ &&-\big(d(\rho_*\theta_4)(\Phi(\theta_1,\theta_2,\theta_3))+c.p.(4)+(d\Phi([\theta_1,\theta_2]_*,\theta_3,\theta_4)+c.p.(6))\big)\\ &=&\nu_4((d_*\Phi)(\theta_1,\theta_2,\theta_3,\cdot))+c.p.(4)+d((d_*\Phi)(\theta_1,\theta_2,\theta_3,\theta_4)),\end{eqnarray*}
 	which vanishes as $d_*\Phi=0$. 
 	
 	This completes the proof of $\Omega^1(M)\xrightarrow{j}\mathrm{IM}^1(A)$ being a weak Lie $2$-algebra.
 \end{itemize}

 \subsubsection{Proof of Proposition \ref{Prop:IMmorphism}}.
 
 We first verify that $\psi_0(\nu,\theta)\in \Der(A)$, namely, to check the conditions
 \[\psi_0(\nu,\theta)(fx)=f\psi_0(\nu,\theta)(x)+\psi_0(\nu,\theta)(f) x,\] \[\mbox{ and } \psi_0(\nu,\theta)[x,y]=[\psi_0(\nu,\theta)(x),y]+[x,\psi_0(\nu,\theta)(y)],\]
 for all $f\in \CinfM$ and $x,y\in \Gamma(A)$. 
 In fact, for the first one, we have
 \[\psi_0(\nu,\theta)(fx)=\rho_*^*\nu(fx)+L_{\theta} (fx)=f\rho_*^*\nu(x)+fL_{\theta} (x)+\rho_*(\theta)(f)x=f\psi_0(\nu,\theta)(x)+\rho_*(\theta)(f)x;\]  For the second one, we use \eqref{IM1}, \eqref{IM2}, \eqref{LiP} and \eqref{Ld}, and obtain 
 \begin{eqnarray*}
 	&&\psi_0(\nu,\theta)[x,y]\\ &=&\rho_*^*\nu[x,y]+L_\theta[x,y]\\ &=&
 	\rho_*^*(L_{\rho x} \nu(y)-\iota_{\rho y} d\nu(x))+[L_\theta x,y]+[x,L_\theta y]-L_{\iota_x d_A \theta} y+\iota_{\iota_y d_A \theta} d_*x\\ &=&d_* \langle \rho x,\nu(y)\rangle +\rho_*^*(\iota_{\rho x} d\nu(y)-\iota_{\rho y} d\nu(x))+[L_\theta x,y]+[x,L_\theta y]-L_{\rho^*\nu(x)} y+\iota_{\rho^*\nu(y)} d_*x\\ &=&[\rho_*^*\nu(x)+L_\theta x,y]+[x,\rho_*^*\nu(y)+L_\theta y]\\ &=&[\psi_0(\nu,\theta)(x),y]+[x,\psi_0(\nu,\theta)(y)].
 \end{eqnarray*}
 
 Next, following  Equation \eqref{LiP}, we have 
 \[-\psi_0(j\gamma)(x)=\rho_*^* \iota_{\rho x} d\gamma+L_{\rho^*\gamma} x=-[\rho_*^*\gamma,x]=-t(\rho_*^* \gamma)(x).\]
 This confirms that the diagram stated in the proposition is commutative. Then we check the relations
 \begin{eqnarray*}
 	\psi_0[(\nu,\theta),(\nu',\theta')]-[\psi_0(\nu,\theta),\psi_0(\nu',\theta')]&=&t\psi_2((\nu,\theta),(\nu',\theta')),\\ 
 	\rho_*^*((\nu,\theta)\triangleright \gamma)-\psi_0(\nu,\theta)(\rho^*_*\gamma)&=&\psi_2((\nu,\theta),j\gamma).
 \end{eqnarray*}
 In fact, by direct calculation, we have
 \begin{eqnarray*}
 	\psi_0[\nu,\nu']-[\psi_0(\nu),\psi_0(\nu')]&=&\rho_*^*\big(\nu\circ \rho_{*}^*\circ \nu'-\nu'\circ \rho_{*}^*\circ \nu)-[\rho_*^*\nu,\rho_*^*\nu']=0;\\ 
 	\psi_0[\theta,\theta']-[\psi_0(\theta),\psi_0(\theta')]&=&L_{[\theta,\theta']_*}(\cdot)-[L_\theta(\cdot),L_{\theta'}(\cdot)],\\
 	\psi_0[\nu,\theta']-[\psi_0(\nu),\psi_0(\theta')]&=&\rho^*_*(-L_{\rho_{*} \theta'} \nu(\cdot)+\nu(L_{\theta'}(\cdot)))-[\rho_*^*\nu(\cdot),L_{\theta'}(\cdot)]\\ &=&-\rho^*_*L_{\rho_{*} \theta'} \nu(\cdot)+L_{\theta'} \rho_*^*\nu(\cdot).
 \end{eqnarray*}
 Together with \eqref{eq1} and \eqref{eq2}, we have
 \begin{eqnarray*}
 	\psi_0[(\nu,\theta),(\nu',\theta')]-[\psi_0(\nu,\theta),\psi_0(\nu',\theta')]&=&[\Phi(\theta,\theta'),\cdot]=t\psi_2((\nu,\theta),(\nu',\theta')).
 \end{eqnarray*}
 Moreover, we have
 \begin{eqnarray*}
 	\rho_*^*((\nu,\theta)\triangleright \gamma)-\psi_0(\nu,\theta)(\rho_*^*\gamma)&=&\rho_*^*(\nu(\rho_*^*\gamma)+L_{\rho_*\theta} \gamma)-\rho_*^*\nu(\rho_*^*\gamma)-L_{\theta} (\rho_*^*\gamma)\\ &=&\rho_*^*L_{\rho_*\theta} \gamma-L_{\theta} (\rho_*^*\gamma)=-\Phi(\theta,\rho^*\gamma)\\ &=&\psi_2((\nu,\theta),j\gamma),
 \end{eqnarray*}
 where we have used   \eqref{eq1} again.

 Finally, it remains to prove
 \[\rho_*^*[(\nu_1,\theta_1),(\nu_2,\theta_2),(\nu_3,\theta_3)]_3=[\psi_0(\nu_1,\theta_1),\psi_2((\nu_2,\theta_2),(\nu_3,\theta_3))]-\psi_2([(\nu_1,\theta_1),(\nu_2,\theta_2)],(\nu_3,\theta_3))+c.p.\]
 Let us compare the two sides of this equation.  By definition and \eqref{IM1}, we have
 \begin{eqnarray*}
 	\mathrm{LHS}&=&
 	\rho^*_*\nu_1(\Phi(\theta_2,\theta_3))+c.p.+d_*(\Phi(\theta_1,\theta_2,\theta_3)),\\ 
 	\mathrm{RHS}&=&\rho_*^*\nu_1(\Phi(\theta_2,\theta_3))+L_{\theta_1}\Phi(\theta_2,\theta_3)-\Phi([\theta_1,\theta_2]_*,\theta_3)+c.p..
 \end{eqnarray*}
 Since $d_*\Phi=0$, it is easy to see that they are identical. This completes the proof of $(\psi_0,\rho_*^*,\psi_2)$ being a  Lie $2$-algebra homomorphism.

\subsubsection{More corollaries}

Recall that Lie bialgebroids are   special   quasi-Lie algebroids $(A,\dstar, \Phi )$ with $\Phi$ being trivial \cite{MX1}. So, we use the pair $(A,\dstar )$ to denote a Lie bialgebroid.

\begin{corollary}
Let $(A,\dstar )$ be a Lie bialgebroid   over the base manifold $M$.% and $\pi$ the induced Poisson structure on the base manifold $M$. 
\begin{itemize}
\item[\rm(i)] There is a strict Lie  $2$-algebra structure on the complex $\Omega^1(M)\xrightarrow{j} \mathrm{IM}^1(A)$, where $j(\gamma):=(-\iota_{\rho(~\cdot~)} d\gamma, -\rho^*\gamma)$, the Lie bracket on $\mathrm{IM}^1(A)$ is given by Equation \eqref{bracket},
%\begin{eqnarray}
%[(\nu,\theta),(\nu', \theta')] &=&\nonumber \big(\nu\circ \rho_{*}^*\circ \nu'-\nu'\circ \rho_{*}^*\circ \nu+L_{(\rho_{*} \theta)}\nu'(\cdot)-\nu'(L_\theta^{A^*}(\cdot))-L_{(\rho_{*} \theta')} \nu(\cdot)+\nu(L_{\theta'}^{A^*}(\cdot)),\\ &&\label{bracket}[\theta,\theta']_{*}\big),
%\end{eqnarray}
and the action of $\mathrm{IM}^1(A)$ on $\Omega^1(M)$ is defined by 
\[(\nu,\theta)\triangleright \gamma:=\nu(\rho_{*}^*\gamma)+L_{\rho_* \theta} \gamma.\]
\item[\rm(ii)] There is a strict Lie  $2$-algebra homomorphism $(\psi_0,\rho_*^*)$:
\begin{equation*}
 		\xymatrix{
 			\Omega^1(M)\ar@{^{}->}[d]_{j}  \ar[r]^{\rho_{*}^*} &  \Gamma(A) \ar@{^{}->}[d]^{t} \\
 			\mathrm{IM}^1(A)\ar[r]^{\psi_0} & \Der (A)},
 	\end{equation*}
	where  \[\psi_0(\nu,\theta)=\rho_{*}^*\circ \nu+L_{\theta}(\cdot),\qquad \forall (\nu,\theta)\in \mathrm{IM}^1(A).\]
\end{itemize}
		\end{corollary}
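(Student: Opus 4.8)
The plan is to obtain this corollary as the $\Phi=0$ specialization of Theorem~\ref{Thm:quasiLiebitoIM2algebra} and Proposition~\ref{Prop:IMmorphism}. First I would recall, following \cite{MX1}, that a Lie bialgebroid $(A,\dstar)$ is precisely a quasi-Lie bialgebroid $(A,\dstar,\Phi)$ with $\Phi=0$: the defining condition $\dstar^2=-[\Phi,\tobefilledin]$ together with $\dstar\Phi=0$ then collapses to the genuine integrability $\dstar^2=0$. Consequently both Theorem~\ref{Thm:quasiLiebitoIM2algebra} and Proposition~\ref{Prop:IMmorphism} apply verbatim to the triple $(A,\dstar,0)$.

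For part~(i), I would invoke Theorem~\ref{Thm:quasiLiebitoIM2algebra} and then examine the homotopy map. Its explicit formula
\[[(\nu_1,\theta_1),(\nu_2,\theta_2),(\nu_3,\theta_3)]_3=d\Phi(\theta_1,\theta_2,\theta_3)+\nu_1(\Phi(\theta_2,\theta_3))+\nu_2(\Phi(\theta_3,\theta_1))+\nu_3(\Phi(\theta_1,\theta_2))\]
is assembled entirely out of $\Phi$, hence vanishes identically once $\Phi=0$. By the definitions in Section~\ref{Sec:PreAlg}, a weak Lie $2$-algebra whose $3$-bracket is zero is by definition a strict Lie $2$-algebra (Lie algebra crossed module): the conditions \eqref{lie21}--\eqref{lie25} degenerate respectively to the Jacobi identity for $[\cdot,\cdot]$ on $\mathrm{IM}^1(A)$, the action/crossed-module compatibility relations, and a trivially satisfied identity. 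Since neither the $2$-bracket~\eqref{bracket} nor the action $(\nu,\theta)\triangleright\gamma=\nu(\rho_*^*\gamma)+L_{\rho_*\theta}\gamma$ refers to $\Phi$, both are unchanged, yielding exactly the asserted strict structure on $\Omega^1(M)\xrightarrow{j}\mathrm{IM}^1(A)$.

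For part~(ii), I would invoke Proposition~\ref{Prop:IMmorphism} and note that its degree $(-1)$ component $\psi_2((\nu,\theta),(\nu',\theta'))=\Phi(\theta,\theta')$ likewise vanishes when $\Phi=0$. The structural identities established in that proposition then reduce, with $\psi_2=0$, to: the chain-map relation $\psi_0\circ j=t\circ\rho_*^*$; the bracket-preservation $\psi_0[(\nu,\theta),(\nu',\theta')]=[\psi_0(\nu,\theta),\psi_0(\nu',\theta')]$ (the $t\psi_2$ term now absent); the action-intertwining $\rho_*^*((\nu,\theta)\triangleright\gamma)=\psi_0(\nu,\theta)(\rho_*^*\gamma)$ (the $\psi_2$ term now absent); and the compatibility of $3$-brackets, which becomes $0=0$ since both $3$-brackets vanish. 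These are precisely the defining conditions of a strict Lie $2$-algebra homomorphism $(\psi_0,\rho_*^*)$ with $\psi_0(\nu,\theta)=\rho_*^*\circ\nu+L_\theta(\cdot)$.

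The only point demanding care, rather than a genuine obstacle, is to confirm that the strict Lie $2$-algebra axioms (and strict homomorphism axioms) are literally the weak ones with vanishing $3$-bracket and vanishing $F_2=\psi_2$; this is immediate from the definitions recorded in Section~\ref{Sec:PreAlg}. No computation beyond the two already-proven results is then required.
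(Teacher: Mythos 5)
Your proposal is correct and is exactly the route the paper intends: the corollary is stated immediately after the remark that Lie bialgebroids are quasi-Lie bialgebroids with $\Phi=0$, and it follows from Theorem \ref{Thm:quasiLiebitoIM2algebra} and Proposition \ref{Prop:IMmorphism} precisely because the $3$-bracket and $\psi_2$ are built entirely from $\Phi$, while the $2$-bracket \eqref{bracket}, the action, and $\psi_0$ do not involve $\Phi$. Your added care in checking that "weak with vanishing $3$-bracket/$F_2$" literally coincides with the strict notions matches the definitions in Section \ref{Sec:PreAlg}, so nothing is missing.
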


We finally consider the particular case of $M$ being a single point. Indeed, an IM $1$-form on a Lie algebra $\g$  is  an element $\theta\in \g^*$ such that $\ad_x^*\theta=0$  for all $x\in \g$. So we can identify $\mathrm{IM}^1(\g)$ with $(\g^*)^{\mathrm{ad}}$ ($\mathrm{ad}^*$-invariant elements). 

\begin{corollary}
Let $(\g,d_*,\Phi)$ be a quasi-Lie bialgebra. 
\begin{itemize}
\item[\rm(i)] There is a Lie  algebra structure $\mathrm{IM}^1(\g)=(\g^*)^{\mathrm{ad}}$, where the bracket is $[\cdot,\cdot]_*$.
\item[\rm(ii)] There is a weak Lie  $2$-algebra homomorphism $(\psi_0,0,\psi_2)$ between two strict Lie $2$-algebras:
\begin{equation*}
 		\xymatrix{
 			0\ar@{^{}->}[d]_{0}  \ar[r]^{0} &  \g \ar@{^{}->}[d]^{t} \\
 			(\g^*)^{\mathrm{ad}}\ar[r]^{\psi_0} & \Der (\g)},
 	\end{equation*}
	where  $\psi_0(\theta)=\ad_\theta^*(\cdot)$ and $\psi_2:\wedge^2 (\g^*)^{\mathrm{ad}}\to \g$ is given by 
	\[\psi_2(\theta,\theta')=\Phi(\theta,\theta').\]
\end{itemize}
\end{corollary}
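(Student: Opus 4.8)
The plan is to deduce this corollary as the specialization of Theorem~\ref{Thm:quasiLiebitoIM2algebra} and Proposition~\ref{Prop:IMmorphism} to the degenerate case in which the base $M$ is a single point, so that $A$ becomes the Lie algebra $\g$ with vanishing anchor $\rho=0$. First I would set up the dictionary of all relevant objects. Since $T^*M=0$, every $\gamma\in\Omega^1(M)$ vanishes, so $\Omega^1(M)=0$; an IM $1$-form $(\nu,\theta)$ is forced to have its component $\nu:\g\to T^*M$ vanish (as $T^*M=0$), and the defining relation \eqref{IM1} degenerates to $\theta([x,y])=0$ for all $x,y$, i.e. $\ad_x^*\theta=0$, whence $\mathrm{IM}^1(\g)=(\g^*)^{\mathrm{ad}}$ as noted in the text. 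Moreover $j:0\to(\g^*)^{\mathrm{ad}}$ and $\rho_*^*:0\to\g$ are the zero maps, while $\Gamma(A)=\g$ and $\Der(A)=\Der(\g)$.

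For part (i), I would note that in the bracket \eqref{bracket} the first component takes values in $\Hom(\g,T^*M)=0$ and therefore disappears, so $[(\nu,\theta),(\nu',\theta')]$ collapses to $[\theta,\theta']_*$. That this bracket is closed on $(\g^*)^{\mathrm{ad}}$ is precisely the well-definedness assertion of Theorem~\ref{Thm:quasiLiebitoIM2algebra}. For the Jacobi identity I would invoke the generalized Jacobi relation \eqref{Jacobi}: the $3$-bracket takes values in $\Omega^1(M)=0$ and hence vanishes identically, so the right-hand side $-j[\cdots]_3$ of \eqref{Jacobi} is zero, yielding $[[\theta_1,\theta_2]_*,\theta_3]_*+c.p.=0$. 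This gives the Lie algebra structure on $(\g^*)^{\mathrm{ad}}$ claimed in (i).

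For part (ii), I would specialize the morphism $(\psi_0,\rho_*^*,\psi_2)$ of Proposition~\ref{Prop:IMmorphism}. Both $2$-term complexes are now \emph{strict} Lie $2$-algebras: the source $0\xrightarrow{j}(\g^*)^{\mathrm{ad}}$ because its $3$-bracket lands in $\Omega^1(M)=0$, and the target $\g\xrightarrow{t}\Der(\g)$ because it is the standard Lie algebra crossed module recalled in Section~\ref{Sec:PreAlg}. With $\nu=0$ and $\rho_*^*=0$, the component $F_\thetaalgebradegone=\rho_*^*$ is the zero map, and $\psi_0(\nu,\theta)=\rho_*^*\nu(\cdot)+L_\theta(\cdot)$ reduces to $L_\theta=\ad_\theta^*$, the coadjoint action of the dual bracket $[\cdot,\cdot]_*$ on $\g$. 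I would then verify the three morphism axioms directly from the proof of Proposition~\ref{Prop:IMmorphism}: axiom (2), which involves an argument in the source degree $(-1)$ space $\Omega^1(M)=0$, holds vacuously; axiom (1) becomes $\psi_0[\theta,\theta']_*-[\psi_0(\theta),\psi_0(\theta')]=t(\Phi(\theta,\theta'))$, which is exactly the identity $\psi_0[\cdots]-[\psi_0,\psi_0]=[\Phi(\theta,\theta'),\cdot]$ established there; and axiom (3) becomes $0=[\psi_0(\theta_1),\psi_2(\theta_2,\theta_3)]-\psi_2([\theta_1,\theta_2]_*,\theta_3)+c.p.$, which is the specialization (with left-hand side $\rho_*^*[\cdots]_3=0$) of the final displayed identity in that proof.

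The step to watch is conceptual rather than computational: one must keep track of the fact that although \emph{both} Lie $2$-algebras degenerate to strict ones, with every $3$-bracket vanishing because its target space is $0$, the morphism between them is nonetheless genuinely \emph{weak}, since $\psi_2(\theta,\theta')=\Phi(\theta,\theta')$ is nonzero as soon as $\Phi\neq 0$. The one small point needing explicit checking is the identification $L_\theta=\ad_\theta^*$ on $\g$, which follows by unwinding the definition of the Lie derivative of the dual (quasi-)Lie structure together with the invariance $\ad_x^*\theta=0$.
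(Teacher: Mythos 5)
Your proposal is correct and follows the paper's own (implicit) route: the corollary is obtained exactly by specializing Theorem~\ref{Thm:quasiLiebitoIM2algebra} and Proposition~\ref{Prop:IMmorphism} to $M=\mathrm{pt}$, where $T^*M=0$ forces $\nu=0$, collapses \eqref{bracket} to $[\theta,\theta']_*$, kills the $3$-brackets and the map $\rho_*^*$, and reduces $\psi_0$ to $L_\theta=\ad_\theta^*$. Your added care in flagging the identification $L_\theta=\ad_\theta^*$ and in noting that the morphism stays genuinely weak (via $\psi_2=\Phi$) even though both Lie $2$-algebras become strict is exactly the content the paper leaves to the reader.
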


\subsection{Relating linear $1$-forms and  vector fields on a quasi-Lie bialgebroid}
Let $A$ be a vector bundle over $M$. Denote by $\Omega^k_{\mathrm{lin}}(A)$ and $\mathfrak{X}_{\mathrm{lin}}^k(A)$, respectively, the spaces of linear $k$-forms \cite{BC} and linear $k$-vector fields \cite{ILX} on   $A$. We adopt the identifications $\Omega^\bullet_{\mathrm{lin}}(A)\cong \Gamma(\mathfrak{J}^\bullet A^*)$ and $\mathfrak{X}^\bullet_{\mathrm{lin}}(A)\cong \Gamma(\mathfrak{D}^\bullet A^*)$ (see   \cite{LL}).

 Consider a quasi-Lie bialgebroid structure  
 $(A,\dstar ,\Phi)$ underlying the vector bundle $A$. The operator $d_*$ gives rise to a $2$-bracket on  $\Gamma(A^*)$ (not a Lie bracket), and it corresponds to a linear bivector field $P_A\in \mathfrak{X}_{\mathrm{lin}}^2(A)$ on $A$. In a usual manner, this $P_A$ defines a $2$-bracket $[\cdot,\cdot]_{P_A}$ on $\Omega^1_{\lin}(A)$.   Also $d_*$ defines an anchor map $\Gamma(A^*)\to \XX(M)$ which can be lifted to a map  
  \[P_A^\sharp: \Omega^1_{\lin}(A)\to \mathfrak{X}^1_{\lin}(A).\] 
% While on  $\mathfrak{X}^1_{\lin}(A)$, we have the 
%Schouten bracket. 

%Let $(A,\dstar )$ be a Lie bialgebroid. Then the Lie algebroid structure on $A^*$ gives a linear Poisson structure $P_A\in \mathfrak{X}_{\mathrm{lin}}^2(A)$ on the vector bundle $A$.

Due to \cite{BC}, we have an  {inclusion} $\iota: \mathrm{IM}^1(A)\hookrightarrow{}\Omega^1_{\mathrm{lin}}(A)$ given by
\begin{eqnarray}\label{i}
\iota(\nu,\theta)=\Lambda_\nu+d\Lambda_\theta,\qquad (\Lambda_\mu)_x:=(dq_A)^*\nu(x), \qquad \forall x\in A,
\end{eqnarray}
where $\Lambda_\theta$ is defined in the same fashion as that of $\Lambda_\nu$ and $q_A:A\to M$ is the projection.  {We will verify   that $\mathrm{IM}^1(A)$ with the bracket given in \eqref{bracket} is a subalgebra of $(\Omega^1_{\mathrm{lin}}(A),[\cdot,\cdot]_{P_A})$ (see (i) of Proposition  \ref{subalgebra}).}

According to \cite{ILX}, $1$-differentials of $A$ are instances of linear $1$-vector fields on $A$. In other words, we have  
an inclusion $\kappa: \Der(A)\hookrightarrow \mathfrak{X}^1_{\mathrm{lin}}(A)$ determined by
\begin{eqnarray}\label{k1}
\kappa(\delta)( dq_A^*f)&=&q_A^*  \delta f ;\\\label{k2}
\kappa(\delta)(dl_{\xi } )_x&=&    \kappa(\delta)( dq_A^*\xi (x))  -\langle \delta x,  \xi\rangle,
\end{eqnarray}
for $\xi \in \Gamma(A^*)$ and $f\in C^\infty(M)$.
%\textcolor{red}{decide later, general $k$ or just $1$}

\begin{proposition}\label{subalgebra}
Let $(A,\dstar ,\Phi)$ be a quasi-Lie bialgebroid. 
\begin{itemize}
\item[\rm(i)]
We have the following commutative diagram:
%\begin{equation*}
 %		\xymatrix{
 %			\mathrm{IM}^1 (A)\ar[r]^{\psi_0}\ar[d]^{\iota}_{} & \Der  (A) \ar[d]_{\kappa}^{} \\
%			\Omega^1_{\mathrm{lin}}(A)\ar[r]^{P_A^\sharp} & \mathfrak{X}^1_{\mathrm{lin}}(A)\\ \Gamma(\mathfrak{J}^1 A^*) \ar[r]^{\phi_0}\ar[u]_{\alpha}^{\cong} & \Gamma(\mathfrak{D}^1 A^*)\ar[u]^{\beta}_{\cong}},
 %	\end{equation*}	
\begin{equation*}
 		\xymatrix{
 			\mathrm{IM}^1 (A)\ar[r]^{\iota}_{\subset}\ar[d]^{\psi_0} &\Omega^1_{\mathrm{lin}}(A)\ar[d]^{P_A^\sharp} &  \Gamma(\mathfrak{J}^1 A^*) \ar[l]^{\cong}_{\alpha}	
			 \ar[d]_{\phi_0}^{} \\
			 \Der  (A)\ar[r]^{\kappa} _{\subset}& \mathfrak{X}^1_{\mathrm{lin}}(A) &\Gamma(\mathfrak{D}^1 A^*) \ar[l]^{\cong}_{\beta}},
 	\end{equation*}		
	 where \[\psi_0(\nu,\theta)=\rho_*^*\nu(\cdot)+L_\theta(\cdot),\qquad \phi_0(\liftingd \xi)=[\xi,\cdot]_*,\qquad \xi\in \Gamma(A^*).\] 
\item[\rm(ii)] Regarding the $2$-brackets of the top objects and the natural Lie bracket of commutator of the bottom objects,   every horizontal map  preserves the relevant brackets.
\end{itemize}
\end{proposition}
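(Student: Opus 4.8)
The plan is to prove both statements by reducing every claimed equality of maps to its action on the two families of functions that generate the fibrewise-polynomial functions on $A$: the pullbacks $q_A^* f$ with $f\in \CinfM$ and the fibrewise-linear functions $l_\xi$ attached to $\xi\in \Gamma(A^*)$. Recall that the linear bivector field $P_A\in \mathfrak{X}^2_{\mathrm{lin}}(A)$ encoding $\dstar$ is determined by $\{l_\xi,l_\eta\}_{P_A}=l_{[\xi,\eta]_*}$, $\{l_\xi,q_A^*f\}_{P_A}=q_A^*(\rho_*(\xi)f)$ and $\{q_A^*f,q_A^*g\}_{P_A}=0$, and that $P_A^\sharp$ carries $\Omega^1_{\mathrm{lin}}(A)$ into $\mathfrak{X}^1_{\mathrm{lin}}(A)$. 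Since a linear form (resp.\ linear vector field) is uniquely pinned down by its pairing with (resp.\ its action on) these generators, it suffices to verify each identity generator-by-generator.

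First I would settle the right square of (i). Under $\alpha$ the holonomic jet $\liftingd\xi$ corresponds to the exact linear form $dl_\xi$, and $P_A^\sharp(dl_\xi)$ is (up to sign) the Hamiltonian vector field of $l_\xi$; evaluating on generators gives $P_A^\sharp(dl_\xi)(l_\eta)=l_{[\xi,\eta]_*}$ and $P_A^\sharp(dl_\xi)(q_A^*f)=q_A^*(\rho_*(\xi)f)$. This is exactly the derivation $[\xi,~\cdot~]_*$ of $A^*$ read through $\beta$, so $\beta\circ\phi_0=P_A^\sharp\circ\alpha$ follows from $\phi_0(\liftingd\xi)=[\xi,~\cdot~]_*$ together with the module relation $\liftingd(f\xi)=f\liftingd\xi+df\otimes\xi$. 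For the left square I would compute $P_A^\sharp\,\iota(\nu,\theta)=P_A^\sharp(\Lambda_\nu+d\Lambda_\theta)$ on the same generators using \eqref{i}, and compare with $\kappa(\rho_*^*\nu(~\cdot~)+L_\theta(~\cdot~))$ expressed through \eqref{k1}--\eqref{k2}; the IM-conditions \eqref{IM1}--\eqref{IM2} and the defining formulas for $\rho_*$ and $[~\cdot~,~\cdot~]_*$ convert one side into the other. The delicate point is that $d\Lambda_\theta$ is responsible for the Lie-derivative part $L_\theta$, and matching this contribution requires the relations \eqref{LiP} between $L_\theta$, $\rho^*$ and the bracket.

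For (ii) I would argue one horizontal map at a time. The inclusion $\kappa$ is a morphism because $\Der(A)$ is a Lie subalgebra of $\mathfrak{X}^1_{\mathrm{lin}}(A)$ under the commutator, the bracket of two derivations being again a derivation, which is immediate from \eqref{k1}--\eqref{k2}. The isomorphisms $\alpha$ and $\beta$ of \cite{LL} intertwine the canonical jet bracket and derivation commutator with $[~\cdot~,~\cdot~]_{P_A}$ and the vector-field bracket respectively; once the bivector $P_A$ is matched with $[~\cdot~,~\cdot~]_*$ as above, this is their defining property. The substantive content is that $\iota$ is a bracket morphism, i.e.\ that $\mathrm{IM}^1(A)$ is a Lie subalgebra of $(\Omega^1_{\mathrm{lin}}(A),[~\cdot~,~\cdot~]_{P_A})$: I would expand $[\iota(\nu,\theta),\iota(\nu',\theta')]_{P_A}$ by the formula \eqref{Eqt:Pbracket1forms} evaluated on $q_A^*f$ and $l_\xi$, and check that the outcome equals $\iota$ applied to the bracket \eqref{bracket}, invoking the IM-conditions repeatedly.

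The main obstacle is precisely this last verification. The source bracket \eqref{bracket} carries the awkward composite terms $\nu\circ\rho_*^*\circ\nu'$ and the Lie-derivative corrections $L_{(\rho_*\theta)}\nu'(~\cdot~)-\nu'(L_\theta(~\cdot~))$, while the target $P_A$-bracket is produced by contraction and Cartan calculus; forcing them to coincide is essentially a re-run, in the linear picture on $A$, of the well-definedness computation already carried out in the proof of Theorem \ref{Thm:quasiLiebitoIM2algebra}, with the same tools \eqref{IM1}, \eqref{IM2}, \eqref{LiP} and \eqref{anchor}. Once this subalgebra statement and the two commuting squares are in hand, the bracket-preservation of $\alpha,\beta,\kappa$ combines with the commutativity of the diagram to yield the full proposition.
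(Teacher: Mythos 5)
Your proposal matches the paper's proof in both structure and substance: part (i) is verified exactly as you describe, by evaluating both composites on the generators $dq_A^*f$ and $dl_\xi$ via \eqref{k1}--\eqref{k2} and the defining properties of $P_A$, and part (ii) is handled by citing the known bracket-preservation of $\beta$ and $\kappa$ and then directly computing that $\iota$ and $\alpha$ intertwine the brackets. One small remark: the paper's computation of $[\iota(\nu,\theta),\iota(\nu',\theta')]_{P_A}$ (splitting into the $\Lambda_\nu$- and $d\Lambda_\theta$-parts and using $(P_A^\sharp (dq_A)^*\nu)_x=-\rho_*^*\nu(x)$) does not actually need the IM-conditions \eqref{IM1}--\eqref{IM2} for the identity itself --- they enter only through the earlier well-definedness of the bracket \eqref{bracket} --- so the ``delicate point'' you flag is slightly easier than you anticipate.
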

 \begin{proof}
 \textit{(i)}  We use the equality $\psi_0(\nu,\theta)(x)=\rho_*^*\nu(x)+L_\theta x=\rho^*_*(\nu(x)+d\theta(x))+\iota_\theta d_*x$ (for all $x\in \Gamma(A)$), and compute the following relations: 
\begin{eqnarray*}
\kappa(\psi_0(\nu,\theta))(dq_A^*f)&=&q_A^*((\rho_*\theta)f);\\
\kappa(\psi_0(\nu,\theta))(dl_\xi)_x&=&\kappa(\psi_0(\nu,\theta))(dq_A^*\xi(x))-\langle \rho^*_*(\nu(x)+d\theta(x))+\iota_\theta d_*x,\xi\rangle\\ &=&\rho_*\theta(\xi(x))-\langle \nu(x)+d\theta(x),\rho_*\xi\rangle-\langle d_*x,\theta\wedge \xi\rangle\\ 
&=&-\langle \rho_*\xi,\nu(x)\rangle+\langle x,[\theta,\xi]_*\rangle.
\end{eqnarray*}
Then comparing with the following lines
 \begin{eqnarray*}
 P_A^\sharp(\iota(\nu,\theta))(dq_A^*f)&=&P_A((dq_A)^*\nu(\cdot)+dl_\theta,dq_A^*f)=q_A^*((\rho_* \theta)f),\\ 
 P_A^\sharp(\iota(\nu,\theta))(dl_\xi)_x&=&P_A((dq_A)^*\nu(x)+dl_\theta,dl_\xi)=P_A((dq_A)^*\nu(x),dl_\xi))+P_A(dl_\theta,dl_\xi)_x\\ &=&-\langle \rho_*\xi,\nu(x)\rangle+\langle x, [\theta,\xi]_*\rangle,
 \end{eqnarray*}
 one immediately proves $\kappa\circ \psi_0=P_A^\sharp\circ \iota$. 
 
 Given any $\liftingd \xi\in \Gamma(\jet^1 A^*)$, we have
 \begin{eqnarray*}
 \langle P_A^\sharp \alpha(\liftingd \xi), dl_\eta\rangle&=&P_A(dl_\xi,dl_\eta)=l_{[\xi,\eta]_*},\qquad \forall \xi,\eta\in \Gamma(A^*),\\ 
 \langle \beta\phi_0(\liftingd \xi), dl_\eta\rangle&=&\langle \beta([\xi,\cdot]_*),dl_\eta\rangle=l_{[\xi,\eta]_*}, \end{eqnarray*}
which clearly implies that $P_A^\sharp\circ  \alpha= \beta\circ \phi_0$.

 \textit{(ii)}  It is known from \cite[Theorem 2.1]{LL}  and \cite[Proposition 3.8]{ILX} that $\beta$ and $\kappa$ are Lie algebra isomorphisms. So we are left  to show the following relations:
\begin{eqnarray}\label{homoeq1}
\iota[(\nu,\theta),(\nu',\theta')]&=&[\iota(\nu,\theta),\iota(\nu',\theta')]_{P_A},\qquad \forall (\nu,\theta),(\nu',\theta')\in \mathrm{IM}^1(A),\\ 
\label{homoeq2}\alpha([\mu,\mu']_{\mathfrak{J}^1 A^*})&=&[\alpha(\mu),\alpha(\mu')]_{P_A},\qquad \forall \mu,\mu'\in \Gamma(\jet^1 A^*).
\end{eqnarray}
Let us denote $(\tilde{\nu},\tilde{\theta})=[(\nu,\theta),(\nu',\theta')]$, where, by \eqref{bracket}, $\tilde{\theta}=[\theta,\theta']_*$. Then Equation \eqref{homoeq1} is equivalent to 
\[(\Lambda_{\tilde{\nu}},d\Lambda_{[\theta,\theta']})=[\Lambda_\nu+d\Lambda_\theta,\Lambda_{\nu'}+d\Lambda_{\theta'}]_{P_A}.\]
By definition, we have $\Lambda_{\theta}=l_{\theta}\in C^\infty_{lin}(A)$; then by $[dl_{\theta},dl_{\theta'}]_{P_A}=dl_{[\theta,\theta']_*}$, we  get  $d\Lambda_{[\theta,\theta']}=[d\Lambda_\theta,d\Lambda_{\theta'}]_{P_A}$.
 Therefore, we can compute 
\begin{eqnarray*}
[\Lambda_\nu,\Lambda_{\nu'}]_{P_A}&=&[(dq_A)^*\nu,(dq_A)^*\nu']_{P_A}\\ &=&dP_A((dq_A)^*\nu,(dq_A)^*\nu')+\big(\iota_{P_A^\sharp (dq_A)^*\nu} d(dq_A)^*\nu'-c.p.(\nu,\nu')\big)\\ &=&0-(dq_A)^*(\nu'\rho_*^*\nu-\nu\rho_*^*\nu'),
\end{eqnarray*}
where we have used the fact that $(P_A^\sharp (dq_A)^*\nu)_x=-\rho_*^*\nu(x)\in A_{m}$ for $x\in A_m$, which  is easily verified  using  local coordinates. 

In the meantime, we find
\begin{eqnarray*}
[d\Lambda_\theta,\Lambda_{\nu'}]_{P_A}&=&[dl_\theta,(dq_A)^*\nu']_{P_A}\\ &=&L_{P_A^\sharp (dl_\theta)} (dq_A)^*\nu'
%\\ &=&dP_A(d l_\theta,(dq_A)^*\nu')+\iota_{P_A^\sharp (dl_\theta)} d(dq_A)^*\nu'
\\ &=&(dq_A)^*(L_{\rho_*\theta} \nu'(\cdot)-\nu'(L_\theta(\cdot)).
\end{eqnarray*}
Combining these equalities, we obtain the desired \eqref{homoeq1}.
For \eqref{homoeq2}, taking $\mu=\liftingd \xi$ and $\mu'=\liftingd \xi'$, we have
\begin{eqnarray*}
\alpha[\liftingd \xi,\liftingd \xi']_{\jet^1 A^*}=\alpha(\liftingd [\xi,\xi']_*)=dl_{[\xi,\xi']_*}=[dl_\xi,dl_{\xi'}]_{P_A}=[\alpha(\liftingd \xi),\alpha(\liftingd \xi')]_{P_A}. 
\end{eqnarray*}
This completes the proof.
\end{proof}

\subsection{Two universal lifting theorems}\label{subSec:unilifting}
In this part, we connect our constructions of weak Lie $2$-algebras, respectively, on the groupoid level and on the associate tangent Lie algebroid level.

We need two basic mappings.
\begin{itemize}
	\item The correspondence $\sigma:~\Omega^1_{\mult}(\Gpd)\to \mathrm{IM}^1(A)$ is given as in Equations \eqref{sigma1} and \eqref{sigma2}. More generally, we have the map  $\sigma:~\Omega^k_{\mult}(\Gpd)\to \mathrm{IM}^k(A)$ for all integers $k$; see \cite{BC} or Appendix \ref{Appendix} for more details.
	\item The map $\tau:~\mathfrak{X}^1_{\mult}(\Gpd)\to \Der (A)$  {  given in \cite{BCLX} is defined as follows --- For any $\Pi\in \mathfrak{X}^1_{\mult}(\Gpd)$, there is a unique $\tau(\Pi)\in \Der(A)$ subject to the relations} 
	\[\overrightarrow{\tau(\Pi)f}=[t^*f, \Pi],\qquad \overrightarrow{\tau(\Pi) x}=[\overrightarrow{x},\Pi],\qquad\forall f\in C^\infty(M),x\in \Gamma(A).\]
\end{itemize}

\begin{theorem}\label{Ptheta}
Let $(\Gpd,P,\Phi)$ be a  quasi-Poisson Lie groupoid and $(A,\dstar ,\Phi)$ the corresponding quasi-Lie bialgebroid. Then the maps $P^\sharp$ and  $\psi_0$ (given by Proposition \ref{Prop:IMmorphism}) together with $\sigma$  and $\tau$  defined above   
form a commutative diagram:
\begin{equation*}
 		\xymatrix{
 			\Omega^1_{\mult}(\Gpd)\ar[r]^{P^\sharp}\ar[d]_{\sigma} & \mathfrak{X}^1_{\mult}(\Gpd) \ar[d]^{\tau}  \\
			\mathrm{IM}^1(A) \ar[r]^{\psi_0} & \Der (A)}.
 	\end{equation*}	
Moreover, if $\Gpd$ is $s$-connected and simply connected, then  both $\sigma$ and $\tau$ are  isomorphisms.
\end{theorem}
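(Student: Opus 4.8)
The proof separates into two essentially independent parts: the commutativity of the square, and the bijectivity of $\sigma$ and $\tau$ when $\Gpd$ is $s$-connected and simply connected. The plan is to establish commutativity by a direct comparison of derivations, and to obtain bijectivity from the integration theorems already in the literature.

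\emph{Commutativity.} Fix $\alpha\in\Omega^1_{\mult}(\Gpd)$ and set $(\nu,\theta)=\sigma(\alpha)\in\mathrm{IM}^1(A)$, where $\theta$ and $\nu$ are recovered from $\alpha$ through \eqref{sigma1} and \eqref{sigma2}. Both $\tau(P^\sharp\alpha)$ and $\psi_0(\nu,\theta)$ lie in $\Der(A)$, and a derivation of $A$ is completely determined by its symbol (its action on $C^\infty(M)$) together with its action on $\Gamma(A)$. Hence it suffices to verify $\tau(P^\sharp\alpha)=\psi_0(\nu,\theta)$ by matching these two pieces of data separately.

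To match the symbols, I would use the defining relation of $\tau$: for $f\in C^\infty(M)$ one has $\overrightarrow{\tau(P^\sharp\alpha)f}=[t^*f,\,\iota_\alpha P]=-(\iota_\alpha P)(t^*f)$, a right-invariant function since $P^\sharp\alpha$ is multiplicative. Restricting it to $M$ and invoking the differentiation of the quasi-Poisson structure of \cite{ILX}, which expresses the anchor $\rho_*$ of $(A,\dstar,\Phi)$ through $P|_M$, identifies the symbol of $\tau(P^\sharp\alpha)$ with $\rho_*\theta$. This is exactly the symbol of $\psi_0(\nu,\theta)$ computed in the proof of Proposition \ref{Prop:IMmorphism}, so the symbols agree. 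To match the action on sections, fix $x\in\Gamma(A)$; then $\tau(P^\sharp\alpha)(x)$ is characterised by $\overrightarrow{\tau(P^\sharp\alpha)(x)}=[\overrightarrow{x},\,\iota_\alpha P]$, which is again right-invariant by multiplicativity of $P^\sharp\alpha$. I would expand it by the Leibniz-type identity
\[[\overrightarrow{x},\,\iota_\alpha P]=\iota_{\mathcal{L}_{\overrightarrow{x}}\alpha}\,P+\iota_\alpha[\overrightarrow{x},P]\]
and then substitute the groupoid-to-algebroid differentiation relation $[\overrightarrow{x},P]=\overrightarrow{\dstar x}$ of \cite{ILX}. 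Restricting the two resulting terms to $M$ and reading off the infinitesimal data through \eqref{sigma1}--\eqref{sigma2} should identify the first contraction with $L_\theta x$ and the second with $\rho_*^*\nu(x)$, whose sum is $\psi_0(\nu,\theta)(x)=\rho_*^*\nu(x)+L_\theta x$. This last identification, together with the careful tracking of signs and of the restriction-to-$M$ maps, is the main technical obstacle; the rest of the argument is formal.

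\emph{Bijectivity.} When $\Gpd$ is $s$-connected and simply connected, the correspondence $\sigma\colon\Omega^1_{\mult}(\Gpd)\to\mathrm{IM}^1(A)$ is an isomorphism by the integration theorem \cite{BC}*{Theorem 2}. Likewise $\tau\colon\mathfrak{X}^1_{\mult}(\Gpd)\to\Der(A)$ is an isomorphism: derivations of $A$ are precisely the infinitesimally multiplicative vector fields, and on a source-simply-connected groupoid they integrate uniquely to multiplicative vector fields, as recorded in \cite{BCLX}. The commutativity established above then promotes the square to a commuting square of vertical isomorphisms, completing the proof.
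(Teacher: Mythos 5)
Your overall architecture coincides with the paper's. For the square, the paper also reduces commutativity to the single identity $\overrightarrow{\psi_0(\sigma(\alpha))(x)}=[\overrightarrow{x},P^\sharp\alpha]$ for $x\in\Gamma(A)$, expands the right-hand side by the Leibniz rule plus Cartan's formula, substitutes the differentiation relation between $P$ and $\dstar$ from \cite{ILX}, and restricts to $M$; and for the ``moreover'' part it likewise just invokes \cite{BC}*{Theorem 2} for $\sigma$ and the universal lifting theorem for $\tau$. (Your separate matching of symbols is harmless but redundant: two derivations of $A$ that agree on $\Gamma(A)$ automatically have the same symbol.)

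However, the term-by-term identification that you defer as ``the main technical obstacle'' is false as stated, so the deferred verification would not close. Restricting to $M$, the two contractions in your expansion actually give
\[
\iota_\alpha[\overrightarrow{x},P]\big|_M=\iota_\theta\,\dstar x,
\qquad\qquad
\iota_{\mathcal{L}_{\overrightarrow{x}}\alpha}P\big|_M=\rho_*^*\bigl(d\langle\theta,x\rangle+\nu(x)\bigr),
\]
the first by the right-invariant analogue of \eqref{eqeq2} applied to $\dstar x\in\Gamma(\wedge^2A)$, the second by splitting $\mathcal{L}_{\overrightarrow{x}}\alpha=d\iota_{\overrightarrow{x}}\alpha+\iota_{\overrightarrow{x}}d\alpha$ and reading off leading terms via \eqref{sigma1}--\eqref{sigma2} together with $P^\sharp t^*\mu=\overrightarrow{\rho_*^*\mu}$. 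Neither term equals the piece of $\psi_0(\nu,\theta)(x)=\rho_*^*\nu(x)+L_\theta x$ you assign to it: both of your claimed identities are equivalent to the single equation $\iota_\theta\dstar x=\rho_*^*\nu(x)$, which is not among the IM-form axioms, is not implied by the quasi-Poisson compatibility, and fails in general (on the pair groupoid of a symplectic manifold with $\alpha=J\gamma$ it reduces to $[x,\pi^\sharp df]=\pi^\sharp d(x f)$ for all $x$ and $f$, which is false). What rescues the computation is the regrouping identity $L_\theta x=\iota_\theta\dstar x+\rho_*^*d\langle\theta,x\rangle$, i.e.\ $\psi_0(\nu,\theta)(x)=\rho_*^*\bigl(\nu(x)+d\theta(x)\bigr)+\iota_\theta\dstar x$, which the paper records in the proof of Proposition \ref{subalgebra} and uses again here: the cross term $\rho_*^*d\langle\theta,x\rangle$, produced by the exact part $d\iota_{\overrightarrow{x}}\alpha$ of the Lie derivative, must be transferred from the $P$-contraction over to the $[\overrightarrow{x},P]$ contribution before either summand can be recognized. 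Only the sum of the two restricted terms equals $\psi_0(\nu,\theta)(x)$; with this regrouping your argument becomes exactly the paper's proof, while without it a term-by-term check along the lines you propose would fail.
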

\begin{proof} Take any  $\Theta\in \Omega^1_{\mult}(\Gpd)$ and suppose that $\sigma(\Theta)=(\nu,\theta)\in \mathrm{IM}^1(A)$. The commutativity relation  $\psi_0\circ \sigma=\tau\circ P^\sharp$ amounts to \[\overrightarrow{\rho_*^*\nu(x)+L_\theta x}=[\overrightarrow{x},P^\sharp\Theta],\qquad \forall x\in \Gamma(A).\]
To prove it, we need to check
 \[\rho^*_*\nu(x)+L_\theta x=[\overrightarrow{x}, P^\sharp\Theta]|_M.\]
 In fact, we have 
\begin{eqnarray*}
(L_{\overrightarrow{x}} P)^\sharp(\Theta)&=&L_{\overrightarrow{x}} (P^\sharp \Theta)-P^\sharp (L_{\overrightarrow{x}} \Theta)= [\overrightarrow{x},P^\sharp \Theta]-P^\sharp(d\iota_{\overrightarrow{x}} \Theta+\iota_{\overrightarrow{x}} d\Theta),
\end{eqnarray*}
and hence 
\begin{eqnarray*}
[\overrightarrow{x}, P^\sharp \Theta]|_M&=&[\overrightarrow{x},P]^\sharp(\Theta)|_M
+P^\sharp(d\iota_{\overrightarrow{x}}\Theta)+\iota_{\overrightarrow{x}} d\Theta)|_M\\ &=&\iota_\theta d_*x+\rho_*^*(d\theta(x)+\nu(x))\\ &=&\rho^*_*\nu(x)+L_\theta x.\end{eqnarray*}
 
\end{proof}

\begin{theorem} Under the same assumption and notation as in Theorem \ref{Ptheta}, the triple of maps $(\sigma,\mathrm{id},0)$ is a   strict Lie $2$-algebra  morphism of  weak Lie $2$-algebras:
\begin{equation*}
 		\xymatrix{
 			\Omega^1(M)\ar[r]^{\mathrm{id}}\ar[d]_{J}& \Omega^1(M) \ar[d]^{j} \\
			\Omega^1_{\mult}(\Gpd) \ar[r]_{\sigma}  & \mathrm{IM}^1(A)}.
 	\end{equation*}	
 	If   $\Gpd$ is $s$-connected and  simply connected, then    $(\sigma,\mathrm{id},0)$ is an isomorphism.
\end{theorem}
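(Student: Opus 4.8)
The plan is to verify directly that $(\sigma,\mathrm{id},0)$ obeys the defining conditions of a morphism of weak Lie $2$-algebras between the structure of Theorem \ref{Thm:case1} and that of Theorem \ref{Thm:quasiLiebitoIM2algebra}. Since the prescribed $F_2$ is zero and both degree $(-1)$ spaces are $\Omega^1(M)$ with $F_{\vartheta}=\mathrm{id}$, conditions (1)--(3) in the morphism definition collapse to the chain--map relation $\sigma\circ J=j$ together with three intertwining identities
\[
\sigma[\Theta_1,\Theta_2]_P=[\sigma\Theta_1,\sigma\Theta_2],\qquad
\Theta\triangleright\gamma=\sigma(\Theta)\triangleright\gamma,\qquad
[\Theta_1,\Theta_2,\Theta_3]_3=[\sigma\Theta_1,\sigma\Theta_2,\sigma\Theta_3]_3,
\]
the last two being equalities in $\Omega^1(M)$. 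I write $\sigma\Theta_i=(\nu_i,\theta_i)$ throughout. The chain--map relation is immediate from \eqref{sigma1}--\eqref{sigma2}: for $x\in\Gamma(A)=\ker(s_*)|_M$ one has $s_*x=0$ and $t_*x=\rho(x)$, so the $\theta$-component of $\sigma(s^*\gamma-t^*\gamma)$ is $-\rho^*\gamma$, while the $\nu$-component read off from $d(s^*\gamma-t^*\gamma)=s^*d\gamma-t^*d\gamma$ is $-\iota_{\rho(\,\cdot\,)}d\gamma$; this is exactly $j\gamma$.

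I would dispatch the $3$-bracket identity first, as it is the cleanest. Starting from \eqref{3br}, use \eqref{eqeq1} to write $d\overleftarrow{\Phi}(\Theta_1,\Theta_2,\Theta_3)=s^*d\Phi(\theta_1,\theta_2,\theta_3)$ and \eqref{eqeq2} to write $\overleftarrow{\Phi}(\Theta_1,\Theta_2)=\overleftarrow{\Phi(\theta_1,\theta_2)}$, so the remaining terms are $\iota_{\overleftarrow{\Phi(\theta_1,\theta_2)}}d\Theta_3$ and its cyclic mates. Since $d\Theta_3\in\Omega^2_{\mult}(\Gpd)$ has leading term $\nu_3$ by \eqref{sigma1}, and $\Phi(\theta_1,\theta_2)\in\Gamma(A)$, part (ii) of Lemma \ref{preparation} gives $\iota_{\overleftarrow{\Phi(\theta_1,\theta_2)}}d\Theta_3=s^*\big(\nu_3(\Phi(\theta_1,\theta_2))\big)$. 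Summing cyclically and cancelling the injective $s^*$ yields precisely the IM $3$-bracket $d\Phi(\theta_1,\theta_2,\theta_3)+\nu_1(\Phi(\theta_2,\theta_3))+\nu_2(\Phi(\theta_3,\theta_1))+\nu_3(\Phi(\theta_1,\theta_2))$ of Theorem \ref{Thm:quasiLiebitoIM2algebra}.

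For the $2$-bracket and the action I would treat the two components of each IM-form separately. The $\theta$-components depend only on $P$ (equivalently on $d_*$): the leading term of $[\Theta_1,\Theta_2]_P$ is $[\theta_1,\theta_2]_*$, matching the $\theta$-entry of \eqref{bracket}, which follows from the leading-term behaviour of the $P$-bracket established in \cite{CLL}. The $\nu$-components are the substantive part: expanding $[\Theta_1,\Theta_2]_P$ via \eqref{Eqt:Pbracket1forms} as $\mathcal{L}_{P^\sharp\Theta_1}\Theta_2-\mathcal{L}_{P^\sharp\Theta_2}\Theta_1-dP(\Theta_1,\Theta_2)$, applying $d$, and restricting to $A\otimes TM$, I would invoke Theorem \ref{Ptheta} to replace the infinitesimal effect of the multiplicative vector field $P^\sharp\Theta_i$ by the derivation $\tau(P^\sharp\Theta_i)=\psi_0(\sigma\Theta_i)=\rho_*^*\nu_i(\,\cdot\,)+L_{\theta_i}(\,\cdot\,)$. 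This is what produces the six-term expression $\nu_1\rho_*^*\nu_2-\nu_2\rho_*^*\nu_1+L_{\rho_*\theta_1}\nu_2-\nu_2(L_{\theta_1}\,\cdot\,)-L_{\rho_*\theta_2}\nu_1+\nu_1(L_{\theta_2}\,\cdot\,)$ of \eqref{bracket}, after repeated use of \eqref{IM1}--\eqref{IM2}, the compatibilities \eqref{LiP}--\eqref{LiP2}, and Cartan calculus. The action identity is handled by the same mechanism, computing $[\Theta,s^*\gamma]_P$ through \eqref{Eqt:Pbracket1forms} and Theorem \ref{Ptheta} and matching $s^*\big(\nu(\rho_*^*\gamma)+L_{\rho_*\theta}\gamma\big)$ with the aid of \eqref{strho}; once the $2$-bracket identity is available, it may alternatively be extracted from the groupoid relation $J(\Theta\triangleright\gamma)=[\Theta,J\gamma]_P$ together with the IM-side identity \eqref{ex}. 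I expect the $\nu$-component matching for the $2$-bracket to be the main obstacle: it is the longest computation and the one where signs, and the distinction between $\rho_*^*\circ\nu$ and $L_\theta$, must be tracked with care, and where Theorem \ref{Ptheta} is indispensable for converting Lie derivatives along $P^\sharp\Theta_i$ on $\Gpd$ into the algebroid derivations $\psi_0(\sigma\Theta_i)$.

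Finally, for the isomorphism statement, when $\Gpd$ is $s$-connected and simply connected the correspondence $\sigma$ is a bijection by \cite{BC}*{Theorem 2} and $\mathrm{id}$ is trivially bijective. Because $F_2=0$, reading the three intertwining identities backwards shows that $(\sigma^{-1},\mathrm{id},0)$ is again a strict morphism of weak Lie $2$-algebras, and hence $(\sigma,\mathrm{id},0)$ is an isomorphism.
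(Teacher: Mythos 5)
Your proposal is correct and follows essentially the same route as the paper's proof: the chain-map relation $\sigma\circ J=j$ by direct evaluation of \eqref{sigma1}--\eqref{sigma2} on $s^*\gamma-t^*\gamma$; the $3$-bracket identity by converting \eqref{3br} into leading-term data via \eqref{eqeq1}--\eqref{eqeq2} and Lemma \ref{preparation}(ii) and then cancelling the injective $s^*$; and the $2$-bracket identity by using Theorem \ref{Ptheta} to replace the multiplicative vector fields $P^\sharp\Theta_i$ by the derivations $\psi_0(\sigma\Theta_i)$. The only organizational difference is that the paper writes $[\Theta,\Theta']_P=L_{P^\sharp\Theta}\Theta'-\iota_{P^\sharp\Theta'}\,d\Theta$ and delegates the conversion of $L_X$ and $\iota_X$ (for multiplicative $X$) into IM data to the technical Lemma \ref{con} in the appendix, so the ``longest computation'' you anticipate is precisely a re-derivation of that lemma; invoking it directly shortens your argument to the paper's.

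One caution: your parenthetical alternative for the action identity --- extracting $\Theta\triangleright\gamma=\sigma(\Theta)\triangleright\gamma$ from the relation $J(\Theta\triangleright\gamma)=[\Theta,J\gamma]_P$, the $2$-bracket identity, and \eqref{ex} --- only yields $j(\Theta\triangleright\gamma)=j\big(\sigma(\Theta)\triangleright\gamma\big)$, and $j$ is not injective in general (for instance $j=0$ whenever $\rho=0$, as for a bundle of Lie algebras), so that shortcut fails. Keep your primary, direct verification via $[\Theta,s^*\gamma]_P$, Theorem \ref{Ptheta}, and \eqref{strho}, which is sound --- and which, incidentally, supplies a condition of the morphism definition that the paper's own proof leaves implicit.
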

\begin{proof} 
We first show that, for   $\Theta$ and $\Theta'\in \Omega^1_{\mult}(\Gpd)$ mapping to,  respectively, $(\nu,\theta),(\nu',\theta')\in \mathrm{IM}^1(A)$ by $\sigma$,  the resulting $[\Theta,\Theta']_{P}\in \Omega^1_{\mult}(\Gpd)$ is mapped to $[(\nu,\theta),(\nu',\theta')]$  (defined in \eqref{bracket}).

By definition, we have
\[[\Theta,\Theta']_P=L_{P^\sharp \Theta} \Theta'-\iota_{P^\sharp \Theta'} d\Theta.\]
It follows from  Theorem \ref{Ptheta} that
the $1$-differential $\sigma(P^\sharp \Theta)=(\delta_0,\delta_1)\in \Der(A)$ is \[\delta_0=\rho_* \theta,\qquad  \delta_1(x)=\rho_*^*\nu(x)+L_\theta x.\]
A well-known fact is the IM $2$-form   $\sigma(d\Theta)$   $=(0,\nu)$ provided that $\sigma(\Theta)=(\nu,\theta)\in \mathrm{IM}^1(A)$.  
Applying a technical Lemma \ref{con} which is presented in the appendix,   
%for $P(\Theta,\Theta')\in \Omega^0_{\mult}(\Gpd)$, the corresponding $\sigma(P(\Theta,\Theta'))=$   
%$(\nu_{P(\Theta,\Theta')},0)\in \mathrm{IM}^0(A)$ is given by  
%\[\nu_{P(\Theta,\Theta')}(x)=\iota_{\rho_*\theta} \nu'(x)+L_{\rho_*\theta}\theta'(x)-\theta'(\rho_*^*\nu(x)+L_\theta x)=\langle \nu'(x),\rho_*\theta\rangle-\langle \nu(x),\rho_* \theta'\rangle+\langle [\theta,\theta']_*,x\rangle.\]
  for $\iota_{P^\sharp \Theta'} d\Theta\in \Omega^1_{\mult}(\Gpd)$, $\sigma(\iota_{P^\sharp \Theta'} d\Theta)$   $=(\nu_1,\theta_1)\in \mathrm{IM}^1(A)$ is given by 
\begin{eqnarray*}
\nu_{1}(x)&=&L_{\rho_*\theta'} \nu(x)-\nu(\rho_*^*\nu'(x)+L_{\theta'} x),\\ 
\theta_{1}(x)&=&-\iota_{\rho_*\theta'} \nu(x)=-\langle \nu(x),\rho_*\theta'\rangle.
\end{eqnarray*}
And $\sigma(L_{P^\sharp \Theta} d\Theta')=(\nu_{2},\theta_{2})\in \mathrm{IM}^1(A)$ is given by 
\begin{eqnarray*}
\nu_{2}(x)&=&L_{\rho_*\theta} \nu'(x)-\nu'(\rho_*^*\nu(x)+L_{\theta} x),\\ 
\theta_{2}(x)&=&L_{\rho_*\theta} \theta'(x)-\theta'(\rho_*^*\nu(x)+L_{\theta} x)=-\langle \rho_* \theta',\nu(x)\rangle+\langle [\theta,\theta']_*,x\rangle.
\end{eqnarray*}
Thus, assuming $\sigma([\Theta,\Theta']_P)=(\tilde{\nu},\tilde{\theta})$, we have
\begin{eqnarray*}
\tilde{\nu}(x)&=&\nu_2(x)-\nu_1(x)=L_{\rho_*\theta} \nu'(x)-\nu'(\rho_*^*\nu(x)+L_\theta x)-L_{\rho_*\theta'} \nu(x)+\nu(\rho_*^*\nu'(x)+L_{\theta'} x),\\
\tilde{\theta}(x)&=&\theta_2(x)-\theta_1(x)=\langle [\theta,\theta']_*,x\rangle.
\end{eqnarray*}
Comparing with \eqref{bracket}, we have proved \begin{eqnarray}\label{goal}
\sigma([\Theta,\Theta']_P)=(\tilde{\nu},\tilde{\theta})=[(\nu,\theta),(\nu',\theta')].
\end{eqnarray}

Then for $\gamma\in \Omega^1(M)$, we have $J\gamma=s^*\gamma-t^*\gamma\in \Omega_{\mult}^1(\Gpd)$. Suppose that   $\sigma(J\gamma)=(\nu,\theta)\in \mathrm{IM}^1(A) $ where \begin{eqnarray*}
\langle \nu(x),Y\rangle &=&d(s^*\gamma-t^*\gamma)(x,Y)=-(d\gamma)(\rho x, Y),\qquad x\in \Gamma(A), Y\in \mathfrak{X}^1(M);\\ 
\theta(x)&=&(s^*\gamma-t^*\gamma)(x)=-\gamma(\rho x).
\end{eqnarray*}
Hence, we find
\begin{eqnarray}\label{goal2}
\sigma(J\gamma)=\sigma(s^*\gamma-t^*\gamma)=(-\iota_{\rho(\cdot)} d\gamma, -\rho^*\gamma)=j(\gamma).
\end{eqnarray}
Therefore,  it remains to prove that
\begin{eqnarray}\label{goal3}
[\Theta_1,\Theta_2,\Theta_3]_3=[(\nu_1,\theta_1),(\nu_2,\theta_2),(\nu_3,\theta_3)]_3,\qquad \forall \Theta_i\in \Omega^1_{\mult}(\Gpd),
\end{eqnarray}
where $(\nu_i,\theta_i)=\sigma(\Theta_i)\in \mathrm{IM}^1(A)$. In fact,   we have 
\begin{eqnarray*}
s^*[\Theta_1,\Theta_2,\Theta_3]_3&=&d\overrightarrow{\Phi}(\Theta_1,\Theta_2,\Theta_3)+\big(\iota_{\overrightarrow{\Phi}(\Theta_1,\Theta_2)} d\Theta_3+c.p.\big)\\ &=&s^*d\Phi(\theta_1,\theta_2,\theta_3)+s^*\big(\nu_3(\Phi(\theta_1,\theta_2))+c.p.\big)\\ &=&s^*[(\nu_1,\theta_1),(\nu_2,\theta_2),(\nu_3,\theta_3)]_3,
\end{eqnarray*}
which justifies \eqref{goal3} (as $s^*$ is injective).
In conclusion, Equations \eqref{goal}-\eqref{goal3} imply that $(\sigma,\mathrm{id},0)$ is a Lie $2$-algebra isomorphism.
\end{proof}

In summary, if a quasi-Poisson groupoid 
  $(\Gpd,P,\Phi)$ is $s$-connected and  simply connected, then regarding the associated quasi-Lie bialgebroid $(A,\dstar ,\Phi)$, we have the following commutative diagrams:
	\begin{equation*}
		\xymatrixcolsep{5pc}\xymatrix@!0{
			& \Omega^1 (M) \ar[rr]^{\rho_*^*} \ar@{^{ }->}'[d][dd]^{j}  &  & \Gamma(A) \ar@{^{}->}[dd]^{t}       \\
			\Omega^1(M) \ar[ur]^{=}\ar[rr]^{\quad \quad \quad p^\sharp}\ar@{^{ }->}[dd]_{J}  &  & \Gamma(A) \ar[ur]_{=}\ar@{^{}->}[dd]_{T} \\
			&   \mathrm{IM}^1(A) \ar'[r][rr]^{P_A^\sharp \quad\quad\quad}&  & \Der (A).           \\
			\Omega^1_{\mathrm{mult}}(\Gpd) \ar[rr]^{P^\sharp}   \ar[ur]^{\cong} &   &
			\mathfrak{X}^1_{\mathrm{mult}}(\Gpd) \ar[ur]_{\cong}        }
	\end{equation*} 
 Here, the front  and back faces are weak  Lie $2$-algebra morphisms as described by Propositions \ref{Lie2homo} and \ref{Prop:IMmorphism} (observing that $p^\sharp=\rho_*^*$), respectively.

% \tohandle{---} 
 
% DIRAC=. Bursztyn, H., Crainic, M.: Dirac geometry, quasi-Poisson actions and D/G-valued moment maps.
% J. Differ. Geom. 82, 501–566 (2009). ArXiv:0710.0639

 %MOMENT=Mikami, K., Weinstein, A.: Moments and reduction for symplectic groupoid actions. Publ. RIMS,
 %Kyoto Univ. 24, 121–140 (1988)
 
 %MORITA=Xu, P.: Momentum maps and Morita equivalence. J. Differ. Geom. 67, 289–333 (2004)

%\tohandle{---} 
\appendix
\section{A technical fact}\label{Appendix}  
Let $\Gpd$ be a Lie groupoid over $M$ and $A$ the tangent Lie algebroid of it. 
A basic mapping $\sigma:~\Omega^k_{\mult}(\Gpd)\to \mathrm{IM}^k(A)$ for all integers $k$ is introduced in \cite{BC}. Here we briefly recall this map. Indeed, an  {\bf IM $k$-form } of a Lie algebroid $A$ is a pair $(\nu,\theta)$, where $\nu: A\to \wedge^k T^*M$ and $\theta: A\to \wedge^{k-1}T^*M$ are bundle maps satisfying the constraints
\begin{eqnarray*}
\iota_{\rho(x)}\theta(y)&=&-\iota_{\rho(y)}\theta(x),\\
\theta([x,y])&=&L_{\rho(x)} \theta(y)-\iota_{\rho(y)}d \theta(x)-\iota_{\rho(y)}\nu(x),\\\mbox{ and }~
\nu([x,y])&=&L_{\rho(x)} \nu(y)-\iota_{\rho(y)}d \nu(x),
\end{eqnarray*}
for $x,y\in \Gamma(A)$. In particular,  an IM $1$-form is a pairs satisfying \eqref{IM1} and \eqref{IM2}. 
Given any $\omega\in \Omega^k_{\mult}(\Gpd)$, the corresponding IM $k$-form $\sigma(\omega)=(\nu,\theta)$ is defined by the following relations
\begin{eqnarray*}
 	  	 \langle\nu(x),U_1\wedge \cdots \wedge U_k\rangle&=&d\omega  (x,U_1,\cdots,U_k), \\
 	  	 \mbox{ and }\quad \langle\theta(x),U_1\wedge \cdots \wedge U_{k-1}\rangle&=&\omega  (x,U_1,\cdots,U_{k-1}), 
 	  	 \end{eqnarray*} 	  	
 	  	for $x\in \sections{A}$ and $U_i\in \mathfrak{X}^1(M)$.  The multiplicativity property of $\omega$ ensures that $(\nu,\theta)$ fulfills the aforementioned conditions of an IM $k$-form   of $A$.

It has been shown in \cite{CLL}*{Lemma 3.8} that for %$\alpha\in \Omega^1_{\multi}(\Gpd), \Pi\in \mathfrak{X}^k_{\mult}(\Gpd)$ and 
$X\in \mathfrak{X}^1_{\mult}(\Gpd)$ and $\Theta\in \Omega^k_{\mult}(\Gpd)$, we have the contraction
%$\iota_\alpha\Pi\in \mathfrak{X}^{k-1}(\Gpd)$ and 
$\iota_X \Theta\in \Omega^{k-1}_{\mult}(\Gpd)$ and the Lie derivative $L_X\Theta\in \Omega^k_{\mult}(\Gpd)$ since the de Rham differential preserves  multiplicativity properties. Now we would like to find the IM-forms corresponding to $\iota_X \Theta$ and $L_X\Theta$ via $\sigma$. 

 Recall the map $\tau:~\mathfrak{X}^1_{\mult}(\Gpd)\to \Der (A)$ defined in Section \ref{subSec:unilifting}.

\begin{lemma}\label{con}
	 For $X\in \mathfrak{X}^1_{\mult}(\Gpd)$, $\Theta\in \Omega^k_{\mult}(\Gpd)$, suppose that $\tau(X)=(\delta_0,\delta_1)\in \Der (A)$, $\sigma(\Theta)=(\nu,\theta)\in \mathrm{IM}^k(A)$,     $\sigma(\iota_X \Theta)=(\tilde{\nu},\tilde{\theta})\in \mathrm{IM}^{k-1}(A)$, and $\sigma(L_X \Theta)=(\hat{\nu},\hat{\theta})\in \mathrm{IM}^{k}(A)$. Then we have 
	\begin{eqnarray*}
		\tilde{\nu}(x)=\iota_{\delta_0}(\nu(x))+L_{\delta_0}(\theta(x))-\theta(\delta_1 x),\qquad 
		\tilde{\theta}(x)=-\iota_{\delta_0}(\theta(x)),\qquad \forall x\in \Gamma(A),
	\end{eqnarray*}
	and 
	\begin{eqnarray*}
		\hat{\nu}(x)=L_{\delta_0} (\nu(x))-\nu(\delta_1 x),\qquad 
		\hat{\theta}(x)=L_{\delta_0} (\theta(x))-\theta(\delta_1 x).
	\end{eqnarray*}\end{lemma}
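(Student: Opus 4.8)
The plan is to reduce all four formulas to the two ``$\theta$-part'' identities, for $\tilde\theta$ and for $\hat\theta$, and then to read off the two ``$\nu$-part'' identities for free from the Cartan calculus. Write $\sigma(\omega)=(\nu_\omega,\theta_\omega)$ for a general multiplicative form $\omega$. The only formal inputs are the linearity of $\sigma$, the identity $\sigma(d\omega)=(0,\nu_\omega)$ recalled in the text (so $\nu_\omega=\theta_{d\omega}$), the commutation $d\circ L_X=L_X\circ d$, and Cartan's formula $L_X=d\circ\iota_X+\iota_X\circ d$. Applying $\nu_\omega=\theta_{d\omega}$ to $\omega=L_X\Theta$ and using $d(L_X\Theta)=L_X(d\Theta)$ gives that $\hat\nu$ is the $\hat\theta$-formula evaluated on $d\Theta$, whose $\theta$-part is $\theta_{d\Theta}=\nu$; this yields $\hat\nu(x)=L_{\delta_0}\nu(x)-\nu(\delta_1x)$. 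Likewise $d\iota_X\Theta=L_X\Theta-\iota_Xd\Theta$ gives $\tilde\nu=\theta_{d\iota_X\Theta}=\hat\theta-\tilde\theta_{d\Theta}$, and inserting the $\tilde\theta$-formula for $d\Theta$, namely $\tilde\theta_{d\Theta}(x)=-\iota_{\delta_0}\nu(x)$, produces exactly $\tilde\nu(x)=\iota_{\delta_0}\nu(x)+L_{\delta_0}\theta(x)-\theta(\delta_1x)$. So it suffices to establish the two $\theta$-part identities.

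The formula for $\tilde\theta$ is the easy one, since it involves no derivatives. By definition of $\sigma$, $\langle\tilde\theta(x),U_1\wedge\cdots\wedge U_{k-2}\rangle=(\iota_X\Theta)(x,U_1,\dots,U_{k-2})|_{1_m}=\Theta(X,x,U_1,\dots,U_{k-2})|_{1_m}$, evaluated at the unit $1_m$. I would split $X|_{1_m}=1_*(\delta_0|_m)+X^{\mathrm v}$ along $T_{1_m}\Gpd=1_*T_mM\oplus A_m$, where $X^{\mathrm v}=X|_{1_m}-1_*(\delta_0|_m)\in\ker s_*=A_m$ because $X$ is $s$-projectable with symbol $\delta_0$. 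The term containing $X^{\mathrm v}$ feeds $\Theta|_M$ two legs in $A$ (namely $X^{\mathrm v}$ and $x$) and hence vanishes: this is the contraction fact $\iota_{\overleftarrow u}\Theta=s^*(\cdot)$ of \cite{CLL}*{Lemmas 3.5 and 3.8} combined with $s_*(A)=0$, which forces $\iota_{a}\iota_{a'}\Theta|_M=0$ for $a,a'\in A$. The surviving term $\Theta(1_*\delta_0,x,U_\bullet)|_{1_m}=-\theta(x)(\delta_0,U_\bullet)$ equals $-\iota_{\delta_0}\theta(x)$ by the leading-term description of $\sigma$, giving the claimed $\tilde\theta$.

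For $\hat\theta$ I would compute $(L_X\Theta)(x,U_1,\dots,U_{k-1})|_{1_m}$ through the invariant expansion
\[
(L_X\Theta)(\overrightarrow x,Y_1,\dots,Y_{k-1})=X\big(\Theta(\overrightarrow x,Y_\bullet)\big)-\Theta([X,\overrightarrow x],Y_\bullet)-\sum_{i}\Theta(\overrightarrow x,\dots,[X,Y_i],\dots),
\]
choosing $\overrightarrow x$ to be the right-invariant lift of $x$ and the $Y_i$ to be lifts of $U_i$ that are tangent to $M$ along $M$. Here $[X,\overrightarrow x]=-\overrightarrow{\delta_1x}$ by the definition of $\tau$, so the middle term contributes the $\theta(\delta_1x)$ piece (its sign being one of the points verified below); the restriction $\Theta(\overrightarrow x,Y_\bullet)|_M$ is governed by the leading term, equal to $\theta(x)$ evaluated on the base fields $Y_\bullet|_M$, so differentiating along $X$ (which projects to $\delta_0$ on $M$) together with the tangential parts of the brackets $[X,Y_i]$ assembles the Lie-derivative term $L_{\delta_0}\theta(x)$; and every $A$-component produced by $[X,Y_i]$ or by the vertical part $X^{\mathrm v}$ pairs with the $A$-leg $\overrightarrow x$ and dies by the same two-$A$-legs vanishing. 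Collecting the pieces gives $\hat\theta(x)=L_{\delta_0}\theta(x)-\theta(\delta_1x)$.

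The main obstacle is precisely the bookkeeping of this last step: one must track the vertical component $X^{\mathrm v}$ at the units, correctly identify the restriction to $M$ of right-invariant objects such as $\overrightarrow{\delta_1x}$ with sections of $A=\ker s_*$ (where the signs in the conventions of $\rho$, $\tau$ and $\sigma$ enter), and verify that the differentiation contribution $X\big(\Theta(\overrightarrow x,Y_\bullet)\big)$ together with the bracket contributions from the $[X,Y_i]$ combine into exactly $L_{\delta_0}\theta(x)$ with the stated sign and with no residual vertical terms. As an independent check, one can integrate $X$ to a local flow of Lie groupoid automorphisms $\Phi_t$, whose induced Lie algebroid automorphisms differentiate to $\tau(X)=(\delta_0,\delta_1)$; since $\sigma$ is natural with respect to groupoid automorphisms, $\sigma(L_X\Theta)=\tfrac{d}{dt}\big|_0\sigma(\Phi_t^*\Theta)=\tfrac{d}{dt}\big|_0(\Phi_t)_A^{\star}\sigma(\Theta)$, and differentiating the pullback action on IM-forms reproduces the $\hat\nu,\hat\theta$ formulas, confirming the sign conventions.
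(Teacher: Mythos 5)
Your proposal is correct and follows essentially the same route as the paper: both proofs evaluate directly at units (using the leading-term description of $\sigma$, the vanishing of multiplicative forms on two $A$-legs, the definition of $\tau$, and the naive expansion of the Lie derivative) and then bootstrap the remaining formulas by Cartan calculus together with the fact $\sigma(d\omega)=(0,\nu_\omega)$. The only difference is organizational — the paper takes $\tilde{\theta}$ and $\tilde{\nu}$ as the directly-computed base cases and deduces the $L_X$-formulas, while you take $\tilde{\theta}$ and $\hat{\theta}$ — but the paper's computation of $\tilde{\nu}$ via $d\iota_X\Theta=L_X\Theta-\iota_Xd\Theta$ contains exactly your $\hat{\theta}$ expansion as its main step, so the mathematical content coincides.
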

\begin{proof} 
	The  proof is simply straightforward computations --- For $U_i\in TM$, we have
	\begin{eqnarray*}
		\langle \tilde{\theta}(x), U_1\wedge\cdots \wedge U_{k-2}\rangle&=&(\iota_X \Theta)(x,U_1,\cdots, U_{k-2})\\ &=&-\Theta(x,X|_M,U_1,\cdots,U_{k-2})\\ &=&-\langle \iota_{\delta_0} \theta(x), U_1\wedge\cdots \wedge U_{k-2}\rangle,
	\end{eqnarray*}
	and
	\begin{eqnarray*}
		\langle \tilde{\nu}(x), U_1\wedge\cdots \wedge U_{k-1}\rangle&=&d(\iota_X \Theta)(x,U_1,\cdots, U_{k-1})\\ &=&(L_X \Theta-\iota_X d\Theta)(x,U_1,\cdots,U_{k-1})\\ &=&X|_M\Theta(x,U_1,\cdots,U_{k-1})-\Theta([X,\overrightarrow{x}]|_M,U_1,\cdots,U_k)\\ &&-\sum_i \Theta(x,\cdots,[X|_M,U_i],\cdots)+\langle \iota_{\delta_0}\nu(x),U_1\wedge \cdots\wedge U_{k-1})\\ &=&\langle L_{\delta_0} \theta(x)-\theta(\delta_1x)+\iota_{\delta_0}\nu(x),U_1\wedge \cdots\wedge U_{k-1}\rangle.
	\end{eqnarray*}
 These are  the desired formulas of $\tilde{\nu}$ and $\tilde{\theta}$. 
 
 Based on the well-known fact that the IM $k$-form $\sigma(d\Theta)=(0,\nu)$ if $\sigma(\Theta)=(\nu,\theta)\in \mathrm{IM}^{k-1}(A)$, we can  determine the IM $k$-forms of $d\iota_X \Theta$ and $\iota_X d\Theta$ as follows:  	
\[\sigma(d\iota_X \Theta)=(0,\tilde{\nu}),\qquad \sigma(\iota_X d\Theta)=\big(L_{\delta_0}(\nu(\cdot))-\nu(\delta_1 (\cdot), -\iota_{\delta_0}(\nu(\cdot))\big).\]
 So the IM $k$-form of $L_X \Theta$ is as described.
\end{proof}

\end{document}